\def\rr{\mathbb R}
\def\nn{\mathbb N}
\def\HH{\mathcal H}
\def\AA{\mathcal A}
\newcommand {\nc}   {\newcommand}
\nc {\be}   {\begin{equation}} \nc {\ee}   {\end{equation}} \nc
\nc {\eeq}  {\end{eqnarray}} \nc {\beqs}
\nc {\eeqs} {\end{eqnarray*}}
\def\edc{\end{document}}
\newtheorem{thm}{Theorem}[section]
\newtheorem{lem}[thm]{Lemma}
\newtheorem{prop}[thm]{Proposition}
\newtheorem{rk}[thm]{Remark}
\numberwithin{equation}{section}
\providecommand{\abs}[1]{\lvert#1\rvert}
\providecommand{\norm}[1]{\lVert#1\rVert}
\theoremstyle{definition}
\numberwithin{equation}{section}
\begin{document}

\title[Stabilization of non-smooth Bresse systems]
{Polynomial stabilization of non-smooth direct/indirect elastic/viscoelastic damping problem involving Bresse system}

\author{St\'ephane Gerbi}
\address{Laboratoire de Math\'ematiques UMR 5127 CNRS, Universit\'e de Savoie Mont Blanc\\
	Campus scientifique, 73376 Le Bourget du Lac Cedex, France}
\email{stephane.gerbi@univ-smb.fr}

\author{Chiraz Kassem}
\address{Universit\'e Libanaise\\
	Facult\'e des Sciences 1\\
	EDST, Equipe EDP-AN\\
	Hadath, Beyrouth, Liban}
\email{shiraz.kassem@hotmail.com}

\author{Ali Wehbe}
\address{Universit\'e Libanaise\\
	Facult\'e des Sciences 1\\
	EDST, Equipe EDP-AN\\
	Hadath, Beyrouth, Liban}
\email{ali.wehbe@ul.edu.lb}

\date{}

\subjclass[2010]{35B37, 35D05, 93C20, 73K50}
\keywords{Bresse system, Kelvin-Voigt damping,  polynomial stability,  non uniform stability, frequency domain approach.}

\begin{abstract}
	We consider an elastic/viscoelastic problem for the Bresse system with fully Dirichlet or Dirichlet-Neumann-Neumann boundary conditions. 
	The physical model consists of three wave equations coupled in certain pattern.
	The system is damped directly or indirectly by global or local Kelvin-Voigt damping. 
	Actually, the number of the dampings, their nature of distribution (locally or globally) and the smoothness of the damping coefficient at the interface play a crucial
	role in the type of the stabilization of the corresponding semigroup. 
	Indeed, using frequency domain approach combined with multiplier techniques and the construction of a new multiplier function, 
	we establish different types of energy decay rate (see the table of stability results at the end). Our results generalize and improve many 
	earlier ones in the literature (see \cite{ArwadeYoussef}) and in particular some studies done on the Timoshenko system with Kelvin-Voigt 
	damping (see for instance  \cite{GhaderWehbe}, \cite{Tian2017} and \cite{ZhaoLiuZhang}).
\end{abstract}

\maketitle

\tableofcontents	
\section{Introduction}\label{se1}
\subsection{The Bresse system with Kelvin-Voigt damping}
Viscoelasticity is the property of materials that exhibit both viscous and elastic characteristics when undergoing deformation. 
There are several mathematical models representing physical damping. The most often encountered type of damping in vibration studies are
linear viscous damping and Kelvin-Voigt damping which are special cases of proportional damping. 
Viscous damping usually models external friction forces such as air resistance acting on the vibrating structures and is thus called ``external damping'', 
while Kelvin-Voigt damping originates from the internal friction of the material of the vibrating structures and thus called ``internal damping".
The stabilization of conservative evolution systems (wave equation, coupled wave equations, Timoshenko system ...) by viscoelastic Kelvin-Voigt type 
damping has attracted the attention of many authors. In particular, it was proved that the stabilization of wave equation with local Kelvin-Voigt damping 
is greatly influenced by the smoothness of the damping coefficient and the region where the damping is localized (near or faraway from the boundary)
even in the one-dimensional case, {see \cite{ChenLiuLiu-1998,Liu-Liu-2002}}. This surprising result initiated the study of an  elastic system
with local Kelvin-Voigt damping 
There are {a} few number of publications concerning the stabilization of Bresse or Timoshenko systems with viscoelastic Kelvin-Voigt damping.
(see Subsection \ref{Aims} below).

In this paper, we study the stability of Bresse system with localized non-smooth Kelvin-Voigt damping coefficient at the interface and we briefly state
results when the Kelvin-Voigt damping coefficients are either global or localized but smooth at the interface since the tools used for the study
of non-smooth coefficient are used in the same, but  much simpler, way when the coefficients act on the totality of the domain or are smooth enough at the interface.
These results generalize and improve many earlier ones in the literature. 

The Bresse system is usually considered in studying elastic structures of the arcs type (see \cite{JGJ}). It can be expressed by the equations of motion:
$$
\begin{array}{rcl}
	\rho_1\varphi_{tt} & = & Q_x +\ell N \\
	\rho_2 \psi_{tt} & = & M_x-Q \\
	\rho_1w_{tt} & = & N_x-\ell Q 
\end{array}
$$
where
$$
N =  \displaystyle{k_3\left(w_x-\ell \varphi\right)} +F_3,\quad Q =  \displaystyle{k_1\left(\varphi_x+\psi+\ell w\right)} +F_1, \quad M  =  \displaystyle{k_2\psi_{x}}+F_2
$$
$$F_1=D_1\left(\varphi_{xt}+\psi_t+\ell w_t\right), \quad F_2=D_2 \psi_{xt}, \quad F_3=D_3\left(w_{xt}-\ell \varphi_t\right)$$
and where $F_1$, $F_2$ and $F_3$ are the Kelvin-Voigt dampings. When $F_1=F_2=F_3=0$, $N$, $Q$ and $M$ denote the axial force,
the shear force and the bending moment. The functions $\varphi,\ \psi,$ and  $w$ model the vertical, shear angle,
and longitudinal  displacements  of the filament. Here $\rho_1=\rho A, \ \rho_2=\rho I, \ k_1=k'GA, \ k_3=EA, \ k_2=EI, \ \ell =R^{-1}$
where $\rho$ is the density of the material, $E$ is the modulus of elasticity, $G$ is the shear modulus, $k'$ is the shear factor,
$A$ is the cross-sectional area, $I$ is the second moment of area of the cross-section, and $R$ is the radius of curvature.
see figure \ref{fig} reproduces from \cite{ArwadeYoussef}. 
The damping coefficients $D_1$, $D_2$ and $D_3$ are bounded non negative functions over $(0,L)$. 
\begin{center}
	\begin{figure}
\centering{\includegraphics[scale=0.5]{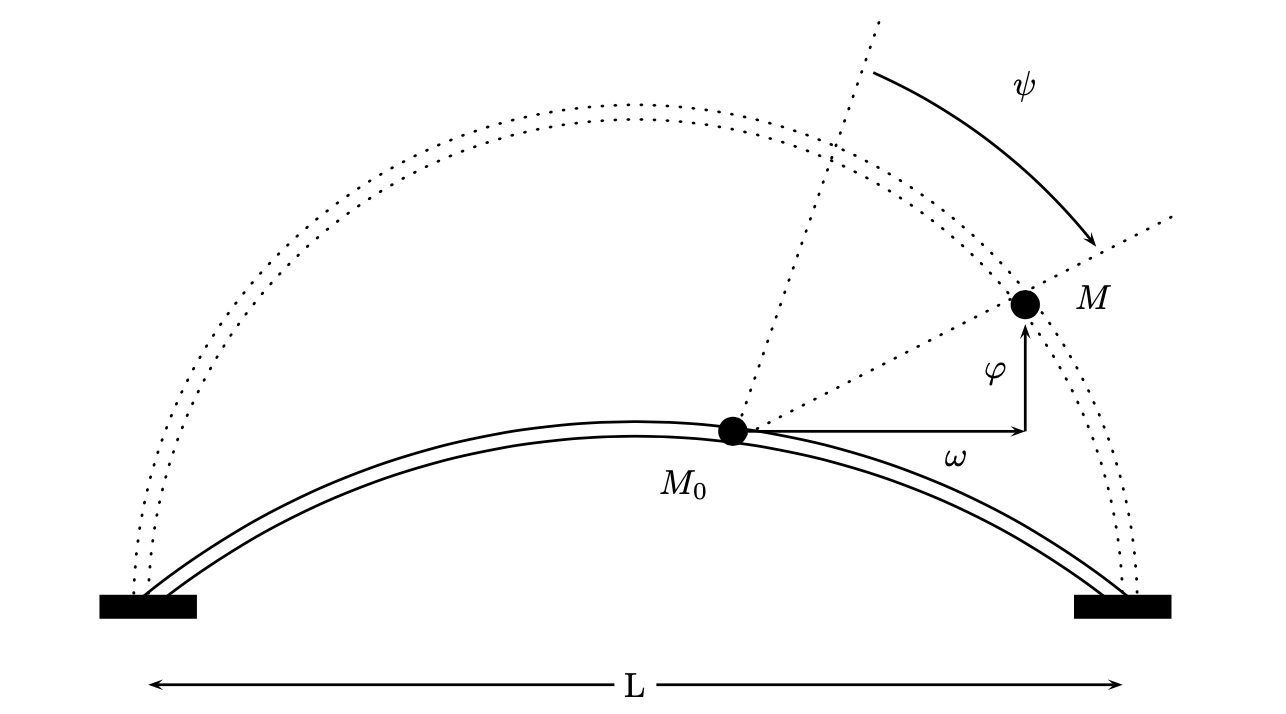}}
\caption{After deformation the particle $M_{0}$ of  the beam is at the position $M$.}
\label{fig}
	\end{figure}
\end{center}
So we will consider the system of  partial differential equations given on $\left(0,L\right)\times\left(0,+\infty\right)$ by the following  form:\\
\begin{equation}\label{eqq1.1'}
	\left\{
	\begin{array}{lll}
\displaystyle{
\rho_1\varphi_{tt}-[k_1 \left(\varphi_x+\psi+\ell  w\right)+D_1\left(\varphi_{xt}+\psi_t+\ell  w_t\right)]_x-\ell  k_3\left(w_x-\ell \varphi\right)-\ell D_3\left(w_{xt}-\ell \varphi_t\right)=0,}\\ \\
\displaystyle{\rho_2 \psi_{tt}-[k_2\psi_{x}+D_2\psi_{xt}]_x+k_1\left(\varphi_x+\psi+\ell w\right)+D_1\left(\varphi_{xt}+\psi_t+\ell  w_t\right)=0,}\\ \\
\displaystyle{\rho_1w_{tt}-[k_3\left(w_x-\ell \varphi\right)
+D_3\left(w_{xt}-\ell \varphi_t\right)]_x
+\ell k_1\left(\varphi_x+\psi+\ell w\right)+\ell D_1\left(\varphi_{xt}+\psi_t+\ell  w_t\right)=0,}
	\end{array}
	\right.
\end{equation}
with fully Dirichlet boundary conditions:
\begin{equation}
	\begin{array}{lll}\label{DDDD}
\varphi\left(0,\cdot\right)=\varphi\left(L,\cdot\right)=\psi\left(0,\cdot\right)=\psi\left(L,\cdot\right)=
w\left(0,\cdot\right)=w\left(L,\cdot\right)=0\quad&\text{in }\mathbb{R}_{+},
	\end{array}
\end{equation}
or  with Dirichlet-Neumann-Neumann boundary conditions:
\begin{equation}\label{DNND}
	\begin{array}{lll}
\varphi\left(0,\cdot\right)=\varphi\left(L,\cdot\right)=\psi_x\left(0,\cdot\right)=\psi_x\left(L,\cdot\right)=
w_x\left(0,\cdot\right)=w_x\left(L,\cdot\right)=0\quad&\text{in }\mathbb{R}_{+},
	\end{array}
\end{equation}
in addition to  the following initial conditions:
\begin{equation}\label{initial conditions}
	\begin{array}{lll}
\varphi\left(\cdot,0\right)=\varphi_0\left(\cdot\right),\ {\psi\left(\cdot,0\right)=\psi_0\left(\cdot\right)},\

w\left(\cdot,0\right)=w_0\left(\cdot\right),\\  \noalign{\medskip} 	\varphi_t\left(\cdot,0\right)=\varphi_1\left(\cdot\right),\
\psi_t\left(\cdot,0\right)=\psi_1\left(\cdot\right),\
w_t\left(\cdot,0\right)=w_1\left(\cdot\right), \ \ \ \text{in} \ (0,L).\\
	\end{array}
\end{equation}
We define the three wave speeds  as: 
\[
c_{1} = \sqrt{\dfrac{k_{1}}{\rho_{1}}} \quad , \quad c_{2} = \sqrt{\dfrac{k_{2}}{\rho_{2}}}\quad,\quad c_{3} = \sqrt{\dfrac{k_{3}}{\rho_{1}}} \quad .
\]
In the absence of the three Kelvin-Voigt damping terms, the system \eqref{eqq1.1'} is a system of three coupled wave equations. 
This system is conservative whereas when at least one of the three Kelvin-Voigt damping  is present, the system is dissipative. The combination of direct damping,
that is damping that acts in the equation involving the unknown itself and indirect damping that acts on another unknown than the one concerns by the equation,
makes this study much delicate.

We note that when $R\rightarrow\infty$, then $\ell \rightarrow0$ and the Bresse model reduces, by neglecting $w$, to the well-known Timoshenko beam equations:
\begin{equation}\label{Tim}
	\left\{
	\begin{array}{lll}
\displaystyle{
\rho_1\varphi_{tt}-[k_1 \left(\varphi_x+\psi \right)+D_1\left(\varphi_{xt}+\psi_t \right)]_x=0,}\\ \\
\displaystyle{\rho_2 \psi_{tt}-[k_2\psi_{x}+D_2\psi_{xt}]_x+k_1\left(\varphi_x+\psi\right)+D_1\left(\varphi_{xt}+\psi_t\right)=0}
	\end{array}
	\right.
\end{equation}
with different types of boundary conditions and with initial data. 

\subsection{Motivation, aims and main results} \label{Aims} The stability of elastic Bresse system with different types of damping (frictional, thermoelastic, Cattaneo, ...)
has been intensively studied (see Subsection \ref{ssb}), but there are a few number of papers concerning the stability of Bresse or Timoshenko systems with local 
viscoelastic Kelvin-Voigt damping. 
In fact, in \cite{ArwadeYoussef}, El Arwadi and Youssef studied the theoretical and numerical stability on a Bresse system with Kelvin-Voigt damping under fully Dirichlet boundary conditions. 
Using multiplier techniques,  they established an exponential energy decay rate provided that the system is subject to three global Kelvin-Voigt damping. 
Later, a numerical scheme based on the finite element method was introduced to approximate the solution. Zhao \textit{et al.} in \cite{ZhaoLiuZhang}, 
considered a Timoshenko system with Dirichlet-Neumann boundary conditions. 
They obtained the exponential stability under certain hypotheses of the smoothness and structural condition of the coefficients of the system, 
and obtain the strong asymptotic stability under weaker hypotheses of the coefficients. Tian and Zhang in \cite{Tian2017} 
considered a Timoshenko system under fully Dirichlet boundary conditions and with two locally or globally Kelvin-Voigt dampings.
First, in the case when the two Kelvin-Voigt dampings are globally distributed, they  showed that the corresponding semigroup is analytic.
On the contrary, they proved that the energy of the system decays exponentially or polynomially and the decay rate depends on properties of material
coefficient function. In \cite{GhaderWehbe}, Ghader and Wehbe generalized the results of \cite{ZhaoLiuZhang}  and \cite{Tian2017}.
Indeed, they  considered the Timoshenko system with only one locally or globally distributed Kelvin-Voigt damping and subject to fully Dirichlet  or to Dirichlet-Neumann boundary conditions.
They established a polynomial energy decay rate of type $t^{-1}$ for smooth initial data.
Moreover, they proved that the obtained energy decay rate is in some sense optimal.
In \cite{Maryati}, Maryati \textit{et al.} considered the transmission problem of a Timoshenko beam composed by $N$ components, 
each of them being either purely elastic, or a Kelvin-Voigt viscoelastic material, or an elastic material inserted with a frictional damping mechanism.
They proved that the energy decay rate depends on the position of each component. In particular, they proved that the model is exponentially stable if and only
if all the elastic components are connected with one component with frictional damping. Otherwise, only a polynomial energy decay rate is established.
So, the stability of the Bresse system with local viscoelastic Kelvin-Voigt damping  is still an open problem.

The purpose of this paper is to study the Bresse system in the presence of  local
non-smooth dampings coefficient at interface and under fully Dirichlet or Dirichlet-Neumann-Neumann boundary conditions.
The system is given by \eqref{eqq1.1'}-\eqref{DDDD} or \eqref{eqq1.1'}-\eqref{DNND} with initial data \eqref{initial conditions}. 

When $D_1$, $D_2$, $D_3 \in L^\infty(0,L)$, using frequency domain approach combined with multiplier techniques and the construction of new multiplier functions,
we establish a polynomial stability of type $\dfrac{1}{t}$ (see Theorem \ref{PolyThe.2}). 
Moreover,  in the presence of only one local damping $D_2$ acting on the shear angle displacement ($D_1=D_3=0$), we establish a polynomial energy decay estimate of type 
$\dfrac{1}{\sqrt{t}}$ (see Theorem \ref{PolyThe1.2}).

Finally, in the absence of at least one damping, we prove the lack of uniform stability for the system \eqref{eqq1.1'}-\eqref{DNND} even 
with smoothness of damping coefficients.  In these cases, we conjecture the optimality of the obtained decay rate. For clarity, let 
\[
\emptyset \not=\omega=(\alpha, \beta) \subset (0,L).
\]
Here and thereafter, $\alpha$ and $\beta$ will be considered as interfaces.

\subsection{Literature concerning the Bresse system} \label{ssb} In  \cite{LR4}, Liu and Rao considered the Bresse system with two thermal 
dissipation laws. They proved an exponential decay rate when the wave speed of the vertical displacement coincides with the wave speed of 
longitudinal displacement or of the shear angle displacement. Otherwise, they showed polynomial decays depending on the boundary conditions.
These results are improved by Fatori and Rivera in \cite{RiveraBresse} where they considered the case of one thermal dissipation law globally distributed 
on the displacement equation. Wehbe and Najdi in \cite{Wehbenadine} extended and improved the results of \cite{RiveraBresse}, when the thermal 
dissipation is locally distributed. Wehbe and Youssef in \cite{wehbey} considered an elastic Bresse system subject to two locally internal dissipation laws.
They proved that the system is exponentially stable if and only if the waves propagate at the same speed. Otherwise, a polynomial decay holds.
Alabau \textit{et al.} in \cite{AlabauBresse} considered the same system with one globally distributed dissipation law.
The authors proved the existence of polynomial decays with rates that depend on some particular relation between the coefficients.
In \cite{aissaguesmia}, Guesmia \textit{et al.}  considered Bresse system with infinite memories acting in the three equations of the system.
They established  asymptotic stability results under some conditions on the relaxation functions regardless the speeds of propagation.
These results are improved by Abdallah \textit{et al.} in \cite{Ghader} where they considered the Bresse system with infinite memory type control 
and/or with heat conduction given by Cattaneo's law acting in the shear angle displacement. The authors established an exponential energy decay
rate when the waves propagate at same speed.  Otherwise, they showed polynomial decays. 
In \cite{Benaissa}, Benaissa and Kasmi, considered the Bresse system with three control of  fractional derivative type acting on the boundary conditions. 
They established a polynomial decay estimate. \\

\subsection{Organization of the paper} This paper is organized as follows: In Section   \ref{se2}, we prove the well-posedness  of system
\eqref{eqq1.1'} with either the boundary conditions \eqref{DDDD} or \eqref{DNND}. Next, in Section \ref{strong},
we prove the strong stability of the system in the lack of the compactness of the resolvent of the generator. 

In Section \ref{Polynomial1} when the coefficient functions $D_1$, $D_2$, and $D_3$ are not smooth, we prove the polynomial stability of type $\frac{1}{t}$. 
In section \ref{Polynomial2}, we prove the polynomial energy decay rate of type $\frac{1}{\sqrt{t}}$ for the system 
in the case of {only} one local non-smooth damping $D_2$ acting on the shear angle displacement.  
In Section \ref{NonUni}, under boundary conditions \eqref{DNND}, we prove the lack of uniform (exponential)
stability of the system in the absence of at least one damping.
Finally in Section \ref{additional}, we will briefly state the analytic stabilization of the system \eqref{eqq1.1'} when the three damping coefficient act on the whole spatial
domain $(0,L)$ and the exponential stability when the three damping coefficient are localized on $(\alpha,\beta)$ and are smooth at the interfaces. 
\section{Well-posedness of the problem} \label{se2}
In this part, using a semigroup approach, we establish the well-posedness result for the systems \eqref{eqq1.1'}-\eqref{DDDD} and \eqref{eqq1.1'}-\eqref{DNND}.
Let $(\varphi,\psi,w)$ be a regular solution of system \eqref{eqq1.1'}-\eqref{DDDD}, its associated energy is given by:
\begin{equation}\label{ Equation 2.1}
	\begin{array}{ll}
\displaystyle{E\left(t\right)}= \displaystyle{\frac{1}{2}\bigg\{\int_0^L\left(\rho_1\left|\varphi_{t}\right|^{2}+\rho_2\left|\psi_t\right|^2+\rho_1\left|w_t\right|^{2}+k_1\left|\varphi_x+\psi+\ell  w\right|^{2}\right)dx}
\\  \noalign{\medskip}  \hspace{1.6cm} \displaystyle{+\int_0^L \left({k}_2\left|\psi_x\right|^{2}+k_3\left|w_x-\ell  \varphi\right|^{2}\right) dx}\bigg\},
	\end{array}
\end{equation}
and it is dissipated according to the following law:
\begin{equation}\label{ Equation 2.2}
	E'\left(t\right)=-\int_0^L \left(D_1|\varphi_{xt}+\psi_t +\ell w_t|^2 +D_2|\psi_{xt}|^2 + D_3|w_{xt}-\ell \varphi_t|^2\right)dx\leq 0.
\end{equation}
Now, we define the following energy spaces:
$$\HH_1= \left(H_0^1(0,L) \times L^2(0,L)\right)^3 \ \ \mathrm{and} \ \ \HH_2=H_0^1(0,L) \times L^2 (0,L)\times\left(  H_{*}^1(0,L) \times L^2_{*}(0,L)\right)^2,$$
where
$$L_*^2(0,L)=\{f\in L^2(0,L):\int_0^Lf(x)dx=0\}\,\,{\rm and}\,\,
H_*^1(0,L)=\{f\in H^1(0,L):\int_0^Lf(x)dx=0\}.$$
Both spaces $\HH_1$ and $\HH_2$ are equipped with the inner product which induces the
energy norm:
\begin{equation}\label{ Equation 2.3}
	\begin{array}{ll}
\displaystyle{\|U\|_{\mathcal{H}_j}^2}&= \displaystyle{\|(v^1,v^2,v^3,v^4,v^5,v^6)\|_{\mathcal{H}_j}^{2}}\\  
&=\rho_1\left\|v^2\right\|^{2}+\rho_2\left\|v^4\right\|^2+\rho_1\left\|v^6\right\|^{2}
+k_1\left\|v^1_x+v^3+\ell  v^5\right\|^{2} \\  \noalign{\medskip}
&+{k}_2\left\|v^3_x\right\|^{2}+k_3\left\|v^5_x-\ell  v^1\right\|^{2}, \ \ j=1,2\
	\end{array}
\end{equation}
here and after $\|\cdot\|$ denotes  the norm of $L^2\left(0,L\right)$ . 
\begin{rk}
	In the case of boundary condition (\ref{DDDD}), it is easy to see that expression (\ref{ Equation 2.3})
	defines a norm on the energy space $\HH_1$. But in the case of boundary
	condition (\ref{DNND}) the expression (\ref{ Equation 2.3}) define a norm on the energy space $\HH_2$ if $L\neq\dfrac{n\pi}{\ell }$
	for all positive integer $n$. Then, here and after, we assume that there does not exist any $n \in \nn$
	such that $L=\dfrac{n\pi}{\ell }$ when $j = 2$.
\end{rk}
Next, we define the linear operator $\mathcal{A}_j$ in $\mathcal{H}_j$ by:
\begin{equation*}
	\begin{array}{l}
D\left(\mathcal{A}_1\right)=\bigg\{\ U\in\mathcal{H}_1 \ |\  v^2,v^4, v^6 \in H_0^1\left(0,L\right),\  \left[k_1
\left(v_x^1+v^3+\ell v^5\right) + D_1\left(v_x^2+v^4+\ell v^6\right)\right]_x\in L^2(0,L)\,, \\ \hspace{3cm} \ 
\left[k_2v_x^3+D_2v_x^4\right]_x \in L^2(0,L), \, \left[k_3(v_x^5-\ell v^1)+D_3(v_x^6-\ell v^2)\right]_x \in L^2(0,L)\bigg\},
	\end{array}
\end{equation*}
\begin{equation*}
	\begin{array}{l}
D\left(\mathcal{A}_2\right)=\bigg\{\ U\in\mathcal{H}_2 \ |\  v^2 \in H_0^1\left(0,L\right), v^4,v^6 \in H_*^1\left(0,L\right),v^3_x|_{0,L}=v^5_x|_{0,L}=0 ,\\ \hspace{3cm} \  \left[k_1
\left(v_x^1+v^3+\ell v^5\right) + D_1\left(v_x^2+v^4+\ell v^6\right)\right]_x\in L^2(0,L)\,, \\ \hspace{3cm} \ 
\left[k_2v_x^3+D_2v_x^4\right]_x \in L_{*}^2(0,L), \, \left[k_3(v_x^5-\ell v^1)+D_3(v_x^6-\ell v^2)\right]_x \in L_{*}^2(0,L)\bigg\}
	\end{array}
\end{equation*}
and
\begin{equation}\label{ Equation 2.4}
	\mathcal{A}_j\left(\begin{array}{l}
v^1\\  \noalign{\medskip} v^2\\  \noalign{\medskip}v^3\\ v^4\\  \noalign{\medskip} v^5\\  \noalign{\medskip} v^6
	\end{array}\right)=\left(\begin{array}{c}
v^2\\  \noalign{\medskip} \rho_1^{-1}\left(\left[k_1(v_x^1+v^3+\ell v^5) + D_1(v_x^2+v^4+\ell v^6)\right]_x  +\ell k_3(v_x^5-\ell v^1)+\ell D_3(v_x^6-\ell v^2)\right)\\
\noalign{\medskip} v^4\\  \noalign{\medskip} 
\rho_2^{-1} \left((k_2v_x^3+D_2v_x^4)_x-k_1\left(v^1_x+v^3+\ell  v^5\right)-D_1(v_x^2+v^4+\ell v^6)\right)\\  \noalign{\medskip}
v^6\\  \noalign{\medskip}
\rho_1^{-1}\left(\left[k_3(v^5_x-\ell  v^1)+D_3(v_x^6 -\ell v^2)\right]_x-\ell  k_1\left(v^1_x+v^3+\ell  v^5\right) -\ell  D_1(v_x^2+v^4+\ell v^6)\right)
	\end{array}\right)
\end{equation}
for all $U=\left(v^1,v^2,v^3,v^4,v^5,v^6\right)^{\mathsf{T}}\in D\left(\mathcal{A}_j\right)$. So, if $U=\left(\varphi,\varphi_t,\psi,\psi_t,w,w_t\right)^{\mathsf{T}}$
is the state of \eqref{eqq1.1'}-\eqref{DDDD} or \eqref{eqq1.1'}-\eqref{DNND}, then the Bresse beam system is transformed into a first
order evolution equation on the Hilbert space $\mathcal{H}_j$:
\begin{equation}\label{Bresse Equation 2.11}
	\left\{
	\begin{array}{c}
U_t=\mathcal{A}_jU, \,\, j=1,2\\  \noalign{\medskip}
U\left(0\right)=U_0(x),
	\end{array}
	\right.
\end{equation}
where
\begin{equation*}
	U_0\left(x\right)=\left(\varphi_0\left(x\right),\varphi_1\left(x\right),{\psi_0\left(x\right)},\psi_1\left(x\right),w_0\left(x\right),
	w_1\left(x\right)\right)^{\mathsf{T}}.
\end{equation*}  
\begin{rk}\label{Remark 2.2}
	It is easy to see that  there exists a positive constant $c_0$ such that { for any $\left(\varphi,\psi,w\right)\in \left(H_0^1\left(0,L\right)\right)^3$ for $j=1$ 
and for any $\left(\varphi,\psi,w\right)\in H_0^1\left(0,L\right)\times \left(H_*^1\left(0,L\right)\right)^2$ for $j=2$,}
	\begin{equation}\label{ Equation 2.5}
k_1\left\|\varphi_x+\psi+\ell w\right\|^2+{k}_2\left\|\psi_x\right\|^2+k_3\left\|w_x-\ell \varphi\right\|^2\leq 
c_0\left(\left\|\varphi_x\right\|^2+\left\|\psi_x\right\|^2+\left\|w_x\right\|^2\right).
	\end{equation}
	On the other hand, we can show  by a contradiction argument the existence of a positive constant $c_1$  such that, for any 
$\left(\varphi,\psi,w\right)\in \left(H_0^1\left(0,L\right)\right)^3$ for $j=1$ and for any $\left(\varphi,\psi,w\right)\in H_0^1\left(0,L\right)\times \left(H_*^1\left(0,L\right)\right)^2$ for $j=2$,
	\begin{equation}\label{Bresse Equation 2.6}
c_1\left(\left\|\varphi_x\right\|^2+\left\|\psi_x\right\|^2+\left\|w_x\right\|^2\right)\leq 
k_1\left\|\varphi_x+\psi+\ell w\right\|^2+ {k}_2\left\|\psi_x\right\|^2+k_3\left\|w_x-\ell \varphi\right\|^2.
	\end{equation}
	Therefore the norm on the energy space $\mathcal{H}_j$ given in \eqref{ Equation 2.3} is equivalent to the usual norm on $\mathcal{H}_j$.
\end{rk}
\begin{prop}\label{Bresse Proposition 2.3}
	Assume that coefficients functions $D_1$, $D_2$ and $D_3$ are non negative. Then, the operator $\mathcal{A}_j$ is m-dissipative in the energy space $\mathcal{H}_j$, for $j=1,2$.
\end{prop}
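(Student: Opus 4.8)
The plan is to verify the two hypotheses of the Lumer--Phillips theorem separately: dissipativity of $\mathcal{A}_j$ and maximality, the latter in the form that $I-\mathcal{A}_j$ is onto $\mathcal{H}_j$.

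For dissipativity I would take $U=(v^1,\dots,v^6)^{\mathsf{T}}\in D(\mathcal{A}_j)$ and compute $\mathrm{Re}\,\langle \mathcal{A}_jU,U\rangle_{\mathcal{H}_j}$ directly from the inner product induced by \eqref{ Equation 2.3}. Inserting the six components of $\mathcal{A}_jU$ from \eqref{ Equation 2.4}, the $\rho_1,\rho_2,\rho_1$-weighted slots produce $L^2$-pairings of the bracketed stress expressions against $v^2,v^4,v^6$, while the elastic slots contribute $k_1\langle v^2_x+v^4+\ell v^6,\,v^1_x+v^3+\ell v^5\rangle+k_2\langle v^4_x,v^3_x\rangle+k_3\langle v^6_x-\ell v^2,\,v^5_x-\ell v^1\rangle$. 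Integrating by parts in the three $\rho$-weighted terms and using the boundary conditions (\eqref{DDDD} for $j=1$, which kills the traces of $v^2,v^4,v^6$; \eqref{DNND} for $j=2$, which gives $v^3_x=v^5_x=0$ at the endpoints together with the domain conditions) makes all boundary contributions vanish; the purely elastic pieces then cancel against their transposes, and the real part collapses to exactly $-\int_0^L\big(D_1|v^2_x+v^4+\ell v^6|^2+D_2|v^4_x|^2+D_3|v^6_x-\ell v^2|^2\big)\,dx\le 0$ since $D_1,D_2,D_3\ge 0$. This is the operator-theoretic form of the dissipation law \eqref{ Equation 2.2}.

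For maximality I would show that for every $F=(f^1,\dots,f^6)^{\mathsf{T}}\in\mathcal{H}_j$ there is $U\in D(\mathcal{A}_j)$ with $(I-\mathcal{A}_j)U=F$. The odd equations read $v^2=v^1-f^1$, $v^4=v^3-f^3$, $v^6=v^5-f^5$; substituting these into the three even equations yields a closed coupled elliptic system for $(v^1,v^3,v^5)$ of the schematic form $\rho_i v^i-[\,(k+D)(\cdots)\,]_x+\text{lower order}=g^i$. I would recast it as a variational problem $a\big((v^1,v^3,v^5),(\phi^1,\phi^3,\phi^5)\big)=L(\phi^1,\phi^3,\phi^5)$ on the test space $(H_0^1)^3$ for $j=1$ (respectively $H_0^1\times(H_*^1)^2$ for $j=2$) and apply the Lax--Milgram theorem. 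Continuity of $a$ is routine; coercivity holds because the principal part reproduces the elastic energy $k_1\|\cdots\|^2+k_2\|\cdots\|^2+k_3\|\cdots\|^2$, which is coercive on the energy space by \eqref{Bresse Equation 2.6}, augmented by the nonnegative damping integrals $\int(D_1|\cdots|^2+D_2|\cdots|^2+D_3|\cdots|^2)$ and the positive mass terms $\rho_i\|v^i\|^2$. Once $(v^1,v^3,v^5)$ and hence $(v^2,v^4,v^6)$ are in hand, I would confirm $U\in D(\mathcal{A}_j)$: the $H^1$-regularity of $v^2,v^4,v^6$ is immediate from the odd equations, and testing the variational identity first against $C_c^\infty$ functions recovers the equations in the distributional sense, showing the bracketed stress terms lie in $L^2$ (or $L^2_*$); testing against functions not vanishing at the endpoints then forces the natural Neumann conditions $v^3_x|_{0,L}=v^5_x|_{0,L}=0$ when $j=2$.

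I expect the genuine difficulty to sit entirely in the case $j=2$: one must carry the zero-average constraints through the whole construction, check that the Lax--Milgram solution automatically generates the Neumann endpoint conditions and that the bracketed terms land in $L^2_*(0,L)$ rather than merely $L^2(0,L)$, and secure coercivity on $H_0^1\times(H_*^1)^2$, where the standing hypothesis $L\neq n\pi/\ell$ (needed already for \eqref{ Equation 2.3} to define a norm) is precisely what underwrites the equivalence \eqref{Bresse Equation 2.6} and hence the lower bound. By contrast, the dissipativity identity and the $j=1$ surjectivity argument are essentially bookkeeping.
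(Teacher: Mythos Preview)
Your proposal is correct and follows essentially the same route as the paper: the dissipativity computation is identical, and maximality is obtained via Lax--Milgram on the coupled elliptic problem for $(v^1,v^3,v^5)$. The one minor difference is that you solve $(I-\mathcal{A}_j)U=F$ directly, whereas the paper instead solves $-\mathcal{A}_jU=F$ (showing $0\in\rho(\mathcal{A}_j)$) and then invokes openness of the resolvent set to get surjectivity of $\lambda I-\mathcal{A}_j$ for small $\lambda>0$. Your choice is slightly more direct for Lumer--Phillips and makes coercivity easier thanks to the mass terms $\rho_i\|v^i\|^2$; the paper's choice has the side benefit of establishing $0\in\rho(\mathcal{A}_j)$, a fact they reuse later in the strong-stability section. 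Your discussion of the $j=2$ subtleties (zero-average constraints, recovery of the Neumann conditions, role of $L\neq n\pi/\ell$) is more detailed than what the paper provides.
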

\rm{\begin{proof}
Let $U=\left(v^1,v^2,v^3,v^4,v^5,v^6\right)^{\mathsf{T}}\in D\left(\mathcal{A}_j\right)$. {By} a {straightforward} calculation, we have:
\begin{equation}\label{ Equation 2.14}
\mathrm{Re}\left(\mathcal{A}_jU,U\right)_{\mathcal{H}_j}=-
\int_0^{L}\left( D_1\left|v_x^2+v^4+\ell v^6\right|^2 + D_2\left|v_x^4\right|^2 +D_3\left|v_x^6-\ell v^2\right|^2 \right)dx.
\end{equation}
As $D_1\geq 0, \, D_2 \geq 0 \,\,\mathrm{and} \,  D_3 \geq 0$ , we get that  $\mathcal{A}_j$ is  dissipative.\\

Now, we will check the maximality of $\AA_j$. For this purpose, let  $F=\left(f^1,f^2,f^3,f^4,f^5,f^6\right)^\mathsf{T}\in\mathcal{H}_j,$ 
we have  to prove the existence of 
$U=\left(v^1,v^2,v^3,v^4,v^5,v^6\right)^\mathsf{T}\in D\left(\mathcal{A}_j\right)$ 
unique solution of the equation
$-\mathcal{A}_jU=F$.

Let $\left(\varphi^1, \varphi^3,\varphi^5\right) \in \left( H_0^1(0,L)\right)^3$ for $j=1$ and   
$\left(\varphi^1, \varphi^3,\varphi^5\right) \in \left(H_0^1(0,L)\times (H_{*}^1(0,L))^2 \right)$ for $j=2$ be a test function.
Writing $-\mathcal{A}_{j} U$ and replacing the first, third and fith component of  $U$ by $-f^1\,,\, -f^3 \,,\,-f^5$ and now multiplying the second, the fourth and the sixth equation by 
respectively $\varphi^1, \varphi^3,\varphi^5$, after integrating by parts, we obtain the following form:
\begin{equation}\label{Bresse Equation 2.24}
\left\{
\begin{array}{ll}
	\displaystyle{k_1 \left(v^1_x+v^3+\ell  v^5\right)\varphi^1_x-\ell  k_3\left(v^5_x-\ell  v^1\right)\varphi^1
=h^1,}
	\\ \noalign{\medskip}
	\displaystyle{k_2 v^3_{x}\varphi_x^3+k_1\left(v^1_x+v^3+\ell  v^5\right) \varphi^3=h^3},
	\\ \noalign{\medskip}
	\displaystyle{k_3\left(v^5_x-\ell  v^1\right) \varphi_x^5+\ell  k_1\left(v^1_x+v^3+\ell v^5\right)\varphi^5=h^5,}
\end{array}
\right.
\end{equation}
where  
\begin{align*}
h^1&=\rho_1 f^2\varphi^1 + D_1\left(f_x^1+f^3+\ell f^5\right)\varphi_x^1 - \ell D_3(f_x^5 -\ell f^1)\varphi^1, \\
h^3&=\rho_2 f^4\varphi^3 +D_2 f_x^3\varphi_x^3+ D_1\left(f_x^1+f^3+\ell f^5\right)\varphi^3, \mbox{ and} \\
h^5&=\rho_1 f^6\varphi^5 +D_3\left(f^5-\ell f^1\right)\varphi_x^5+ \ell D_1\left(f_x^1+f^3+\ell f^5\right)\varphi^5.
\end{align*}
Using Lax-Milgram Theorem (see \cite{Pazy83}), we deduce that \eqref{Bresse Equation 2.24} admits a unique solution in 
$\left( H^1_0\left(0,L\right)\right)^3$ for $j=1$ and in $\left(H_0^1(0,L)\times (H_{*}^1(0,L))^2 \right)$ for $j=2$.
Thus, $-\mathcal{A}_jU=F$ admits {an} unique solution $U\in D\left(\mathcal{A}_j\right)$ and consequently $0\in \rho(\mathcal{A}_j)$.
Then, $\mathcal{A}_j$ is closed and consequently $\rho\left(\mathcal{A}_j\right)$ is open set of $\mathbb{C}$ (see Theorem 6.7 in \cite{Kato76}).
Hence,  we easily get $R(\lambda I -\mathcal{A}_j ) = \mathcal{H}_j$ for sufficiently small $\lambda>0 $. This, together with the
dissipativeness of $\mathcal{A}_j$, imply that   $D\left(\mathcal{A}_j\right)$ is dense in $\mathcal{H}_j$   and that $\mathcal{A}_j$
is m-dissipative in $\mathcal{H}_j$ (see Theorems 4.5, 4.6 in  \cite{Pazy83}).
The proof is thus complete.

\end{proof}}
\noindent	Thanks to  Lumer-Phillips Theorem (see \cite{LiuZheng01, Pazy83}), we deduce that $\mathcal{A}_j$ generates a  $C_0$-semigroup of contraction $e^{t\mathcal{A}_j}$ in $\mathcal{H}_j$ and therefore  problem \eqref{Bresse Equation 2.11} is well-posed. We have thus the following result.
\begin{thm}\label{Theorem 2.4}
	For any $U_0\in\mathcal{H}_j$, problem \eqref{Bresse Equation 2.11}  admits a unique weak solution
	$$U\in C\left(\mathbb{R}_{+};\mathcal{H}_j\right).$$
	Moreover, if $U_0\in D\left(\mathcal{A}_j\right),$ then
	$$U\in C\left(\mathbb{R}_{+};D\left(\mathcal{A}_j\right)\right) \cap C^1\left(\mathbb{R}_{+};\mathcal{H}_j\right).$$
\end{thm}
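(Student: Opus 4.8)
The plan is to deduce the result directly from the abstract theory of $C_0$-semigroups, since all the analytic work has already been carried out in Proposition \ref{Bresse Proposition 2.3}. By that proposition, $\mathcal{A}_j$ is m-dissipative on the Hilbert space $\mathcal{H}_j$; in particular $D(\mathcal{A}_j)$ is dense in $\mathcal{H}_j$ and $R(I-\mathcal{A}_j)=\mathcal{H}_j$. The Lumer--Phillips theorem (see \cite{LiuZheng01, Pazy83}) then guarantees that $\mathcal{A}_j$ generates a $C_0$-semigroup of contractions $(e^{t\mathcal{A}_j})_{t\geq 0}$ on $\mathcal{H}_j$. I would take this semigroup as the starting point of the proof.

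For the first assertion, given $U_0\in\mathcal{H}_j$ I would define the mild (weak) solution by $U(t)=e^{t\mathcal{A}_j}U_0$. The strong continuity of the semigroup gives at once $U\in C\left(\mathbb{R}_{+};\mathcal{H}_j\right)$, and the uniqueness of such a solution is built into the generation statement, since the generator $\mathcal{A}_j$ determines the semigroup uniquely. No further computation is needed here.

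For the second assertion, when $U_0\in D(\mathcal{A}_j)$ I would invoke the classical regularity result for the abstract Cauchy problem $U_t=\mathcal{A}_jU$, $U(0)=U_0$ (Pazy \cite{Pazy83}): the orbit $t\mapsto e^{t\mathcal{A}_j}U_0$ remains in $D(\mathcal{A}_j)$ for every $t\geq 0$, is continuously differentiable as an $\mathcal{H}_j$-valued map, and satisfies $\frac{d}{dt}U(t)=\mathcal{A}_jU(t)$. Reading this with $D(\mathcal{A}_j)$ endowed with its graph norm yields exactly $U\in C\left(\mathbb{R}_{+};D\left(\mathcal{A}_j\right)\right)\cap C^1\left(\mathbb{R}_{+};\mathcal{H}_j\right)$.

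Since the whole argument is a direct citation of the Hille--Yosida/Lumer--Phillips machinery together with its associated well-posedness theorem, there is essentially no obstacle to overcome at this stage: the only step requiring genuine work was the maximal dissipativity already established in Proposition \ref{Bresse Proposition 2.3}. If one wished to keep the proof fully self-contained, the single point worth recording is that $\mathcal{H}_j$ equipped with the energy inner product inducing \eqref{ Equation 2.3} is a genuine Hilbert space, which is guaranteed by the norm equivalence of Remark \ref{Remark 2.2}, so that the Hilbert-space form of Lumer--Phillips indeed applies.
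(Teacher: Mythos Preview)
Your proposal is correct and matches the paper's own argument essentially line for line: the paper simply remarks, just before stating Theorem \ref{Theorem 2.4}, that by the Lumer--Phillips theorem (using the m-dissipativity from Proposition \ref{Bresse Proposition 2.3}) $\mathcal{A}_j$ generates a $C_0$-semigroup of contractions, and then states the well-posedness result without further proof. Your additional observations about the graph-norm continuity and the role of Remark \ref{Remark 2.2} are accurate and only make explicit what the paper leaves implicit.
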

\section{Strong stability of the system}\label{strong}
In this part, we use a general criteria of Arendt-Batty in \cite{Arendt-Batty88}   to show the strong stability of the $C_0$-semigroup $e^{t\mathcal{A}_j}$ associated to the Bresse system \eqref{eqq1.1'} in the absence of the compactness of the resolvent of $\mathcal{A}_j$. Before, we state our main result, we need the following stability condition:
\begin{equation*}
	\mbox{(SSC)} \quad \quad
	\mbox{There exist}\, i \in  \{1,2,3\},\, d_0>0\, \,\mathrm{and}\,\, \alpha<\beta \in [0,L] \, \mbox{such that}\,\, D_i \geq d_0 >0\, \,\mathrm{on} \,\, (\alpha, \beta).
\end{equation*}
\begin{thm}\label{Bresse Theorem 2.5}
	Assume that condition (SSC) holds. Then the $C_0-$semigroup $e^{t\mathcal{A}_j}$ is strongly stable in $\mathcal{H}_j$, $j=1, 2$, \textit{i.e.}, for all $U_0\in\mathcal{H}_j$, the solution of \eqref{Bresse Equation 2.11} satisfies
	$$
	\lim_{t\to+\infty}\left\|e^{t\mathcal{A}_j} U_0\right\|_{\mathcal{H}_j}=0.
	$$
\end{thm}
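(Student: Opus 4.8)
The plan is to invoke the Arendt--Batty criterion: since $e^{t\mathcal{A}_j}$ is a contraction semigroup on the Hilbert space $\mathcal{H}_j$, it is strongly stable provided that $\mathcal{A}_j$ has no eigenvalue on the imaginary axis and that $\sigma(\mathcal{A}_j)\cap i\mathbb{R}$ is at most countable. Because the resolvent of $\mathcal{A}_j$ is not compact, the spectrum need not be discrete, so I cannot read these facts off from an eigenvalue expansion; instead I will establish the stronger statement $i\mathbb{R}\subset\rho(\mathcal{A}_j)$ directly, which makes $\sigma(\mathcal{A}_j)\cap i\mathbb{R}$ empty and rules out imaginary eigenvalues at once. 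Recall that $0\in\rho(\mathcal{A}_j)$ is already known from Proposition~\ref{Bresse Proposition 2.3}, so only $\lambda\neq 0$ needs attention.

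First I would prove that $\mathcal{A}_j$ has no purely imaginary eigenvalue. Suppose $\mathcal{A}_jU=i\lambda U$ with $U\in D(\mathcal{A}_j)$ and $\lambda\in\mathbb{R}$. Taking the real part and using the dissipation identity \eqref{ Equation 2.14} gives $0=\mathrm{Re}(i\lambda\|U\|_{\mathcal{H}_j}^2)=\mathrm{Re}(\mathcal{A}_jU,U)_{\mathcal{H}_j}=-\int_0^L\big(D_1|v_x^2+v^4+\ell v^6|^2+D_2|v_x^4|^2+D_3|v_x^6-\ell v^2|^2\big)\,dx$, so every damping integral vanishes; in particular, on $(\alpha,\beta)$ where $D_i\geq d_0>0$ for the distinguished index $i$ of (SSC), the associated combination is identically zero. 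Feeding this back into the six scalar relations encoded in $\mathcal{A}_jU=i\lambda U$ and using $v^2=i\lambda v^1$, $v^4=i\lambda v^3$, $v^6=i\lambda v^5$, I would run a unique continuation argument: the vanishing of the active damping term on $(\alpha,\beta)$ collapses the corresponding equation to an undamped one there, and the coupling forces all three displacement fields and their derivatives to vanish on $(\alpha,\beta)$; solving the resulting linear ODE system with zero Cauchy data at an interior point then propagates $U\equiv 0$ across all of $(0,L)$, whence $U=0$.

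Next I would show surjectivity of $i\lambda I-\mathcal{A}_j$ for every $\lambda\neq 0$, i.e. $i\lambda\in\rho(\mathcal{A}_j)$. Arguing by contradiction, if $i\lambda\in\sigma(\mathcal{A}_j)$ then, since eigenvalues are excluded and $\mathcal{A}_j$ is closed with $\sigma(\mathcal{A}_j)\subset\{\mathrm{Re}\leq 0\}$, there is a singular (Weyl) sequence $(U_n)\subset D(\mathcal{A}_j)$ with $\|U_n\|_{\mathcal{H}_j}=1$ and $F_n:=(i\lambda I-\mathcal{A}_j)U_n\to 0$ in $\mathcal{H}_j$. Pairing with $U_n$ and taking real parts, \eqref{ Equation 2.14} yields $\int_0^L\big(D_1|\cdots|^2+D_2|\cdots|^2+D_3|\cdots|^2\big)\,dx=\mathrm{Re}(F_n,U_n)_{\mathcal{H}_j}\to 0$, so the active damping term tends to $0$ in $L^2(\alpha,\beta)$. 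I would then run frequency-domain multiplier estimates on the components of $U_n$ to transport this smallness from $(\alpha,\beta)$ to the whole domain and across the undamped equations, concluding $\|U_n\|_{\mathcal{H}_j}\to 0$, a contradiction. Having thereby established $i\mathbb{R}\subset\rho(\mathcal{A}_j)$, the Arendt--Batty theorem yields the claimed strong stability.

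The main obstacle in both steps is the same: transferring the dissipation, which a priori controls only a single combination of derivatives on the subinterval $(\alpha,\beta)$, to the remaining (possibly undamped) equations and to all of $(0,L)$ through the Bresse coupling. This is most delicate in the indirect single-damping situation, where no damping acts directly on two of the three fields, and one must exploit the curvature coupling $\ell$ together with the structural relations among $\varphi,\psi,w$; the usual care with the boundary conditions \eqref{DDDD} versus \eqref{DNND}, and with the excluded resonant lengths $L=n\pi/\ell$ for $j=2$, is also needed to make the ODE and multiplier arguments close.
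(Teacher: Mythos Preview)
Your overall strategy---Arendt--Batty, with the stronger conclusion $i\mathbb{R}\subset\rho(\mathcal{A}_j)$---matches the paper, and your eigenvalue step is essentially the paper's Lemma~\ref{Bresse Lemma 2.6}: dissipation identity kills the active damping term on $(\alpha,\beta)$, then an ODE unique-continuation argument propagates zero data to all of $(0,L)$. One caution: on $(\alpha,\beta)$ you do not directly obtain that all three fields and their derivatives vanish. In the paper's proof (for the hardest case $D_1=D_3=0$, $D_2\geq d_0$) one only gets $v^3_x=0$ on $(\alpha,\beta)$, and must pass to the differentiated unknowns $\widehat v^i=v^i_x$ and exploit the coupling to force $\widehat v^3=\widehat v^5=0$ and $\widehat v^1_x=0$ there; moreover a case split $\lambda\neq \ell\sqrt{k_3/\rho_1}$ versus $\lambda=\ell\sqrt{k_3/\rho_1}$ is needed, the latter leaving $\widehat v^1$ merely constant on $(\alpha,\beta)$ and requiring a modified first-order system before the Cauchy argument closes. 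Your sketch is in the right spirit but should not claim more than this.

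Where you genuinely diverge from the paper is the surjectivity step. You argue via the approximate point spectrum: if $i\lambda\in\sigma(\mathcal{A}_j)$ then, being on the boundary of a spectrum contained in the closed left half-plane, it lies in $\sigma_{\mathrm{ap}}(\mathcal{A}_j)$, so a Weyl sequence exists; you then propose multiplier estimates at fixed $\lambda$ to derive $\|U_n\|\to 0$. This is logically sound, but it duplicates work. The paper's Lemma~\ref{Bresse Lemma 2.7} is more economical: it rewrites $(i\lambda I-\mathcal{A}_j)U=F$ as $(\lambda^2\mathcal{I}-\mathcal{L})v=h$ with $\mathcal{L}$ an isomorphism $\big(H^1_0\big)^3\to\big(H^{-1}\big)^3$ (Lax--Milgram) and $\mathcal{I}$ compact between these spaces, then invokes Fredholm's alternative, reducing surjectivity to the injectivity already established in the eigenvalue step. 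This avoids a second round of multiplier or unique-continuation estimates entirely. Your route would work, but the Fredholm argument recycles Lemma~\ref{Bresse Lemma 2.6} for free and is the cleaner path here.
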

For the proof of Theorem  \ref{Bresse Theorem 2.5}, we need the following two lemmas.
\begin{lem}\label{Bresse Lemma 2.6}
	Under the same condition of Theorem \ref{Bresse Theorem 2.5}, we have
	\begin{equation}\label{Bresse Equation 2.25}
\ker\left(\ i\lambda-\mathcal{A}_j\right)=\{0\},\,\,j=1,2,\,\, \, {\rm for\, all } \,\, \lambda\in\mathbb{R}.
	\end{equation}
\end{lem}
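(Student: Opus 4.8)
The plan is to show that if $i\lambda$ is an eigenvalue of $\mathcal{A}_j$ with eigenvector $U = (v^1,\dots,v^6)^{\mathsf T} \in D(\mathcal{A}_j)$, then $U = 0$. The starting point is the dissipation identity \eqref{ Equation 2.14}. Since $\mathcal{A}_j U = i\lambda U$, taking the inner product with $U$ gives $\mathrm{Re}(\mathcal{A}_j U, U)_{\mathcal{H}_j} = \mathrm{Re}(i\lambda \|U\|^2) = 0$, so the right-hand side of \eqref{ Equation 2.14} vanishes:
\begin{equation*}
\int_0^L \left( D_1\left|v_x^2+v^4+\ell v^6\right|^2 + D_2\left|v_x^4\right|^2 + D_3\left|v_x^6 - \ell v^2\right|^2 \right) dx = 0.
\end{equation*}
Because each $D_i \ge 0$, every term is nonnegative, and under condition (SSC) at least one coefficient $D_i$ is bounded below by $d_0 > 0$ on the subinterval $(\alpha,\beta)$. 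The first consequence I would extract is therefore that the corresponding dissipation expression vanishes a.e. on $(\alpha,\beta)$: depending on which index $i$ satisfies (SSC), one of $v_x^2 + v^4 + \ell v^6 = 0$, $v_x^4 = 0$, or $v_x^6 - \ell v^2 = 0$ holds on $(\alpha,\beta)$.

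Next I would feed the eigenvalue relation $\mathcal{A}_j U = i\lambda U$ back into the componentwise equations from \eqref{ Equation 2.4}. This immediately gives $v^2 = i\lambda v^1$, $v^4 = i\lambda v^3$, $v^6 = i\lambda v^5$, so the problem reduces to the three second-order equations in $v^1, v^3, v^5$ (equivalently in $\varphi = v^1$, $\psi = v^3$, $w = v^5$). On the subinterval $\omega = (\alpha,\beta)$, the vanishing of the damping term lets me rewrite the relevant coupled-wave equation there without its Kelvin–Voigt contribution, yielding a homogeneous ODE system on $\omega$ with the spectral parameter $\lambda$. The key structural observation is that the vanishing dissipation term, together with the $v^2 = i\lambda v^1$ type relations, forces a rigidity on $\omega$; for instance if $D_2$ is the active damping then $v_x^4 = i\lambda v_x^3 = 0$ on $\omega$, so $v^3$ is constant there, and one propagates this information through the coupling in the remaining two equations.

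The main obstacle, and where the real work lies, is upgrading the information on the small interval $\omega$ to the whole of $(0,L)$ and then to $U \equiv 0$. This is a unique-continuation argument: the eigenfunction satisfies a (conservative, elastic) system of ODEs on $(0,L)$ that is analytic in $x$ away from the interfaces, it is overdetermined on $\omega$ by the extra vanishing condition, and one must show the Cauchy data generated on $\omega$ forces the trivial solution globally. I expect this to proceed by deriving, on $\omega$, that enough of the components and their derivatives vanish to serve as zero Cauchy data for the elastic system at an interface point, then invoking uniqueness for the resulting linear ODE initial value problem to extend the zero solution across $(0,L)$, and finally using the boundary conditions \eqref{DDDD} or \eqref{DNND} to conclude $U = 0$. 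Care is needed because the three wave equations are coupled through the terms $\varphi_x + \psi + \ell w$ and $w_x - \ell \varphi$, so the cases (which $D_i$ is active) must be handled somewhat separately, and one must track the $\lambda = 0$ case, where the static system must be shown to have only the trivial solution under the given boundary conditions, separately from $\lambda \neq 0$.
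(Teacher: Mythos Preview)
Your outline is correct and matches the paper's approach: dissipation identity, the relations $v^2=i\lambda v^1$, $v^4=i\lambda v^3$, $v^6=i\lambda v^5$, overdetermination on $(\alpha,\beta)$, and propagation by writing the reduced system as a first-order linear ODE and invoking uniqueness of the initial value problem; the case $\lambda=0$ is indeed handled separately via $0\in\rho(\mathcal{A}_j)$. One technical point you should anticipate: when $D_2$ is the active damping, after differentiating the reduced system and using $v^3_x=0$ on $\omega$ one arrives at $(\rho_1\lambda^2-\ell^2 k_3)\,\widehat v^1=0$ on $\omega$, so for the exceptional value $\lambda=\ell\sqrt{k_3/\rho_1}$ you only get $\widehat v^1=\text{const}$ there rather than full zero Cauchy data, and the paper handles this by passing to a five-component subsystem before applying ODE uniqueness.
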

\begin{proof} We will prove Lemma \ref{Bresse Lemma 2.6} in the case $D_1=D_3=0$ on $(0,L)$ and $D_2 \geq d_0 >0$ on $(\alpha, \beta) \subset (0,L)$. The other cases are similar to prove. \\
	First, from  Proposition \ref{Bresse Proposition 2.3}, we claim that $0\in\rho\left(\mathcal{A}_j\right).$  We still have to show the result for 
	$\lambda\in\mathbb{R^*}$. Suppose that there exist a real number $\lambda\neq 0$  and
	$ 0 \neq U=\left(v^1,v^2,v^3,v^4,v^5,v^6\right)^{\mathsf{T}}\in D\left(\mathcal{A}_j\right)$ such that:
	\begin{equation}\label{Bresse Equation 2.26}
\mathcal{A}_jU=\ i\lambda U.
	\end{equation}
	Our goal is to find a contradiction by proving that $U=0$. Taking the real part of the inner product in $\HH_j$  of $\AA _jU$  and $U$, we get:
	\begin{equation}\label{Sam1}
\mathrm{Re}\left(\mathcal{A}_jU,U\right)_{\mathcal{H}_j}= - \int_0^L D_2\left|v_x^4\right|^2dx =0.
	\end{equation}
	Since by assumption $D_2 \geq d_0 >0$ on $(\alpha, \beta)$,  it follows from equality \eqref{Sam1} that:
	\begin{equation}\label{dissipation}
D_2v^4_x=0 \quad \mathrm{in} \quad (0, L)\, \, \mathrm{and} \, \, v^4_x=0 \quad \mathrm{in} \quad (\alpha, \beta). 
	\end{equation}
	Detailing \eqref{Bresse Equation 2.26} we get:
	\begin{eqnarray}
v^2&=&\ i\lambda v^1, \label{Bresse Equation 2.27}
\\
k_1 \left(v^1_x+v^3+\ell  v^5\right)_x+\ell  k_3\left(v^5_x-\ell  v^1\right)&=&\ i\rho_1\lambda v^2,\label{Bresse Equation 2.28}
\\
v^4&=&\ i\lambda v^3,\label{Bresse Equation 2.29}
\\
\left(k_2v^3_x+D_2v^4_x\right)_x-k_1\left(v^1_x+v^3+\ell  v^5\right)
&=&\ i\rho_2\lambda v^4,\label{Bresse Equation 2.30}
\\
v^6&=&\ i\lambda v^5,\label{Bresse Equation 2.31}
\\
k_3\left(v^5_x-\ell v^1\right)_x-\ell k_1\left(v^1_x+v^3+\ell  v^5\right)&=&\ i\rho_1\lambda v^6.\label{Bresse Equation 2.32}
	\end{eqnarray}
	Next, inserting  \eqref{dissipation} in \eqref{Bresse Equation 2.29} and using the fact that $\lambda \neq0$, we get:
	\begin{equation}\label{dissipation2}
v^3_x=0 \quad \mathrm{in} \quad (\alpha, \beta). 
	\end{equation}
	Moreover, substituting equations \eqref{Bresse Equation 2.27}, \eqref{Bresse Equation 2.29} and \eqref{Bresse Equation 2.31} into equations 
\eqref{Bresse Equation 2.28}, \eqref{Bresse Equation 2.30} and \eqref{Bresse Equation 2.32}, we get:
	\begin{equation} \label{Bresse strong}
\left\{
\begin{array}{ll}
\rho_1 \lambda^2 v^1+k_1 \left(v^1_x+v^3+\ell  v^5\right)_x+\ell  k_3\left(v^5_x-\ell  v^1\right)=0, \\ \noalign{\medskip}
\rho_2 \lambda^2 v^3 +\left(k_2v^3_x+i D_2 \lambda v^3_x\right)_x-k_1\left(v^1_x+v^3+\ell  v^5\right)=0,\\ \noalign{\medskip}
\rho_1 \lambda^2 v^5+k_3\left(v^5_x-\ell v^1\right)_x-\ell k_1\left(v^1_x+v^3+\ell  v^5\right)=0.
\end{array}
\right.
	\end{equation}
	{Now, we introduce the functions $\widehat{v}^{i}$, for $i=1,..,6$ by} 
	$
	\widehat{v}^{i}=v_{x}^{i} \ .
	$
	It is easy to see that $\widehat{v}^{i} \in H^1(0,L)$.\\
	It follows from equations
	\eqref{dissipation} and \eqref{dissipation2} that:
	\begin{equation}\label{Bresse strong.1}
\widehat v^3 = \widehat v^4 =0 \quad \mathrm{in} \quad (\alpha, \beta)
	\end{equation}
	and consequently system \eqref{Bresse strong} will be, after differentiating it with respect to $x$, given by:  
	\begin{eqnarray} 
\rho_1 \lambda^2 \widehat v^1+k_1 \left(\widehat v^1_x+\ell  \widehat v^5\right)_x+\ell  k_3\left(\widehat v^5_x-\ell  \widehat v^1\right)&=&0 \quad \mathrm{in} \quad (\alpha, \beta),\label {Bresse Equation 2.33}\\ \noalign{\medskip}
\widehat v^1_x+\ell  \widehat v^5&=&0   \quad \mathrm{in} \quad (\alpha, \beta),\label {Bresse Equation 2.34}\\ \noalign{\medskip}
\rho_1 \lambda^2 \widehat v^5+k_3\left(\widehat v^5_x-\ell  \widehat v^1\right)_x-\ell k_1\left(\widehat v^1_x+\ell  \widehat v^5\right)&=&0 \quad \mathrm{in} \quad (\alpha, \beta). \label {Bresse Equation 2.35}
	\end{eqnarray}
	Furthermore, substituting equation \eqref{Bresse Equation 2.34} into  
	\eqref{Bresse Equation 2.33} and \eqref{Bresse Equation 2.35}, we get:
	\begin{eqnarray} 
\rho_1 \lambda^2 \widehat v^1+\ell  k_3\left(\widehat v^5_x-\ell  \widehat v^1\right)&=&0 \quad \mathrm{in} \quad (\alpha, \beta),\label {Bresse Equation 2.36}\\ \noalign{\medskip}
\widehat v^1_x+\ell  \widehat v^5&=&0   \quad \mathrm{in} \quad (\alpha, \beta),\label {Bresse Equation 2.37}\\ \noalign{\medskip}
\rho_1 \lambda^2 \widehat v^5+k_3\left(\widehat v^5_x-\ell  \widehat v^1\right)_x&=&0 \quad \mathrm{in} \quad (\alpha, \beta). \label {Bresse Equation 2.38}
	\end{eqnarray}
	Differentiating equation \eqref{Bresse Equation 2.36} with respect to $x$, a straightforward computation with equation  \eqref{Bresse Equation 2.38} yields:
	\begin{equation*}
\rho_1	\lambda ^2 \left(\widehat v_x^1 - \ell  \widehat v^5\right) = 0 \quad \mathrm{in} \quad (\alpha, \beta).
	\end{equation*}  
	Equivalently
	\begin{equation} \label{Bresse Equation 2.39} 
\widehat v_x^1 - \ell  \widehat v^5 = 0 \quad \mathrm{in} \quad (\alpha, \beta).
	\end{equation} 
	Hence, from equations \eqref{Bresse Equation 2.37} and \eqref{Bresse Equation 2.39}, we get:
	\begin{equation}\label{Bresse strong.2}
\widehat v^5 =0 \quad \mathrm{and} \quad \widehat v^1_x =0 \quad  \mathrm{in}  \quad (\alpha,\beta). 
	\end{equation}
	{Plugging} $\widehat v^5=0 $ in  \eqref{Bresse Equation 2.36}, we get:
	\begin{equation}\label{Bresse Equation 2.40} 
\left(\rho_1 \lambda^2- \ell ^2 k_3\right) \widehat v^1=0.
	\end{equation}
	{In order to finish} our proof, {we have to distinguish two cases:} \\
	\underline{\bf Case 1:} $\lambda \neq \ell \ \sqrt{\dfrac{k_3}{\rho_1}}$.\\
	Using equation \eqref{Bresse Equation 2.40} , we deduce that:
	$$\widehat v^1=0 \quad \mathrm{in} \quad (\alpha, \beta).$$
	Setting $V= \left(\widehat v^1,\widehat v^1_x,\widehat v^3,\widehat v^3_x,\widehat v^5,\widehat v^5_x \right)^\mathsf{T}$. By continuity of $\widehat v^i$ on $(0,L)$,
we deduce that $V(\alpha)=0$.
	Then system \eqref{Bresse strong} could be given as:
	\begin{equation} \label{SystemV.2}
\left\{
\begin{matrix}
V_x &= &B V, \ \ \ {\rm{in}} \ \ (0, \alpha)\\
V(\alpha) &=&0,
\end{matrix}
\right.
	\end{equation}
	where 
	\begin{equation}
B=
\begin{pmatrix}
0 & 1 & 0 & 0 & 0 & 0\\
\dfrac{-\lambda^2 \rho_1+ \ell ^2k_3}{k_1} & 0 & 0 & -1 & \dfrac{-\ell (k_1+k_3)}{k_1}&0\\
0 & 0 & 0& 1 & 0 & 0\\
0 & \dfrac{k_1}{k_2+i \lambda D_2} & \dfrac{k_1- \lambda^2 \rho_2}{k_2+i \lambda D_2} & 0 & \dfrac{\ell k_1}{k_2+i \lambda D_2}&0\\
0 & 0 & 0 & 0 & 0 & 1\\
0 & \dfrac{\ell (k_3 +k_1)}{k_3}& \dfrac{\ell k_1 }{k_3}& 0 & \dfrac{\ell ^2k_1 - \lambda^2\rho_1}{k_3}& 0
\end{pmatrix}.
\end{equation}
Using ordinary differential equation theory, we deduce that system \eqref{SystemV.2} has the unique trivial solution $V=0$ in $(0,\alpha)$.
The same argument as above leads us to prove that $V=0$ on $(\beta, L)$. 
Consequently, we obtain $\widehat v^1 =  \widehat v^3 = \widehat v^5 = 0$ on $(0,L)$. It follows that $\widehat v^2 =  \widehat v^4 = \widehat v^6 = 0$
on $(0,L)$, thus $\widehat U =0$. This gives that $U=C$, where $C$ is a constant. Finally, from the boundary condition \eqref{DDDD} or \eqref{DNND}, we deduce that $U=0$.\\ 
\underline{\bf Case 2:} $\lambda=\ell \ \sqrt{\dfrac{k_3}{\rho_1}}$.\\
The fact that $ \widehat v^1_x=0$ on $(\alpha, \beta)$, we get $\widehat v^1 = c$ on $(\alpha, \beta)$, where $c$ is a constant.
By continuity of $\widehat v^1$ on $(0,L)$, we deduce that $\widehat v^1(\alpha)=c$.
We know also that $\widehat v^3 = \widehat v^5=0$ on $(\alpha, \beta)$ from \eqref{Bresse strong.1} and \eqref{Bresse strong.2}.
Hence, setting $V(\alpha)= (c,0,0,0,0,0)^\mathsf{T} = V_0$, we can rewrite system \eqref{Bresse strong} on  $(0,\alpha)$ under the form: 
\begin{equation*} 
\left\{
\begin{matrix}
V_x &= &\widehat B V, \\
V(\alpha) &=&V_0,
\end{matrix}
\right.
	\end{equation*}
	where 
	\begin{equation*}
\widehat B=
\begin{pmatrix}
0 & 1 & 0 & 0 & 0 & 0\\
0 & 0 & 0 & -1 & \dfrac{-\ell (k_1+k_3)}{k_1}&0\\
0 & 0 & 0& 1 & 0 & 0\\
0 & \dfrac{k_1}{k_2+i \ell \sqrt{\frac{k_3}{\rho_1}} D_2} & \dfrac{k_1- \lambda^2 \rho_2}{k_2+i \ell \sqrt{\frac{k_3}{\rho_1}}\lambda D_2} & 0 & \dfrac{\ell k_1}{k_2+i \ell \sqrt{\frac{k_3}{\rho_1}} D_2}&0\\
0 & 0 & 0 & 0 & 0 & 1\\
0 & \dfrac{\ell (k_3 +k_1)}{k_3}& \dfrac{\ell k_1 }{k_3}& 0 & \dfrac{\ell ^2(k_1 - k_3)}{k_3}& 0
\end{pmatrix}.
\end{equation*}
Introducing $\widetilde V= \left(\widehat v^1_x,\widehat v^3,\widehat v^3_x,\widehat v^5,\widehat v^5_x \right)^\mathsf{T}$ and
\begin{equation*}
\widetilde B=
\begin{pmatrix}
0 & 0 & -1 & \dfrac{-\ell (k_1+k_3)}{k_1}&0\\
0 & 0& 1 & 0 & 0\\
\dfrac{k_1}{k_2+i \ell \sqrt{\frac{k_3}{\rho_1}} D_2} & \dfrac{k_1- \lambda^2 \rho_2}{k_2+i \ell \sqrt{\frac{k_3}{\rho_1}}\lambda D_2} & 0 & \dfrac{\ell k_1}{k_2+i \ell \sqrt{\frac{k_3}{\rho_1}} D_2}&0\\
0 & 0 & 0 & 0 & 1\\
\dfrac{\ell (k_3 +k_1)}{k_3}& \dfrac{\ell k_1 }{k_3}& 0 & \dfrac{\ell ^2(k_1 - k_3)}{k_3}& 0
\end{pmatrix}.
\end{equation*}
Then system \eqref{Bresse strong} could be given as:
\begin{equation} \label{Sam2}
\left\{
\begin{matrix}
\widetilde	V_x &= &\widetilde B\widetilde V, \ \ \mathrm{in}  \ \ (0, \alpha), \\
\widetilde	V(\alpha) &=&0.
\end{matrix}
\right.
\end{equation}
Using ordinary differential equation theory, we deduce that system \eqref{Sam2} has the unique trivial solution  $\widetilde V=0$ in $(0, \alpha)$.
This implies that on $(0, \alpha)$, we have $\widehat v^3 = \widehat v^5=0$. Consequently, $v^3=c_3$  and $v^5=c_5$ where $c_3$ and $c_5$ are constants.
But using the fact that $v^3(0)=v^5(0)=0$, we deduce that $v^3=v^5=0$ on $(0, \alpha)$.\\
Substituting $v^3$ and $v^5$ by their values in the second equation of system \eqref{Bresse strong}, we get that $v^1_x=0$. This yields $v^1=c_1$, where $c_1$ is a constant.
But as $v^1(0)=0$, we get: $v^1=0$ on $(0,\alpha)$.
Thus $U=0$ on $(0, \alpha)$. 
The same argument as above leads us to prove that $U=0$ on $(\beta, L)$ and therefore $U=0$ on $(0,L)$. Thus the proof is complete. 
\end{proof}
\begin{lem}\label{Bresse Lemma 2.7}
	Under the same condition of Theorem \ref{Bresse Theorem 2.5}, $(\ i\lambda I  -\mathcal{A}_j), j=1, 2$ is surjective for all $\lambda\in\mathbb{R}$.
\end{lem}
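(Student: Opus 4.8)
The plan is to fix an arbitrary $F=(f^1,f^2,f^3,f^4,f^5,f^6)^{\mathsf T}\in\mathcal H_j$ and to produce $U=(v^1,\dots,v^6)^{\mathsf T}\in D(\mathcal A_j)$ solving $(i\lambda I-\mathcal A_j)U=F$. The value $\lambda=0$ needs no work, since Proposition \ref{Bresse Proposition 2.3} already gives $0\in\rho(\mathcal A_j)$, so that $-\mathcal A_j$ is onto; hence I would assume $\lambda\neq0$ throughout. The first step is to write the resolvent equation line by line and eliminate the velocity components. The first, third and fifth lines yield
\begin{equation*}
v^2=i\lambda v^1-f^1,\qquad v^4=i\lambda v^3-f^3,\qquad v^6=i\lambda v^5-f^5,
\end{equation*}
and inserting these into the remaining lines reduces the problem to a single second-order system for the triple $(v^1,v^3,v^5)$, in which the Kelvin-Voigt damping enters only through the complex coefficients $k_1+i\lambda D_1$, $k_2+i\lambda D_2$, $k_3+i\lambda D_3$.

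I would then pass to the weak formulation: seek $V=(v^1,v^3,v^5)$ in $\mathcal V_1=\left(H_0^1(0,L)\right)^3$ for $j=1$, respectively $\mathcal V_2=H_0^1(0,L)\times\left(H_*^1(0,L)\right)^2$ for $j=2$, such that $a_\lambda(V,\Phi)=\mathcal L(\Phi)$ for every test triple $\Phi=(\phi^1,\phi^3,\phi^5)$ in the same space, where $\mathcal L$ is the bounded antilinear functional gathering the data $F$ (its structure is exactly that of the right-hand sides $h^1,h^3,h^5$ met in the proof of Proposition \ref{Bresse Proposition 2.3}, up to $\lambda$-dependent terms) and
\begin{equation*}
\begin{array}{rl}
a_\lambda(V,\Phi)=&\displaystyle\int_0^L(k_1+i\lambda D_1)(v^1_x+v^3+\ell v^5)\,\overline{(\phi^1_x+\phi^3+\ell\phi^5)}\,dx\\[1mm]
&+\displaystyle\int_0^L\Big[(k_2+i\lambda D_2)v^3_x\,\overline{\phi^3_x}+(k_3+i\lambda D_3)(v^5_x-\ell v^1)\,\overline{(\phi^5_x-\ell\phi^1)}\Big]dx\\[1mm]
&-\displaystyle\lambda^2\int_0^L\big(\rho_1v^1\overline{\phi^1}+\rho_2v^3\overline{\phi^3}+\rho_1v^5\overline{\phi^5}\big)\,dx.
\end{array}
\end{equation*}

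The crux is that $a_\lambda$ fails to be coercive because of the indefinite zeroth-order term $-\lambda^2\int_0^L\rho(\cdots)$. I would therefore split $a_\lambda=a_\lambda^0+a_\lambda^1$, where $a_\lambda^0$ keeps only the three gradient integrals. Taking real parts kills the purely imaginary Kelvin-Voigt contributions (since $\lambda$ and $D_i$ are real), and since $D_i\geq0$ the real part of $a_\lambda^0(V,V)$ dominates $k_1\|v^1_x+v^3+\ell v^5\|^2+k_2\|v^3_x\|^2+k_3\|v^5_x-\ell v^1\|^2$, which by the equivalence of norms of Remark \ref{Remark 2.2} (inequality \eqref{Bresse Equation 2.6}) together with Poincar\'e controls the full $\mathcal V_j$-norm. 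Thus $a_\lambda^0$ is bounded and coercive and defines an isomorphism by the Lax-Milgram theorem (see \cite{Pazy83}). The remainder $a_\lambda^1$ involves only $L^2$-products, so the compact embeddings $H_0^1(0,L)\hookrightarrow L^2(0,L)$ and $H_*^1(0,L)\hookrightarrow L_*^2(0,L)$ make its associated operator compact. Hence the operator attached to $a_\lambda$ is a compact perturbation of an isomorphism, and the Fredholm alternative reduces its surjectivity to its injectivity.

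The final step is to discharge injectivity using Lemma \ref{Bresse Lemma 2.6}: if $V$ lies in the kernel of the variational operator, then setting $v^2=i\lambda v^1$, $v^4=i\lambda v^3$, $v^6=i\lambda v^5$ produces an element of $\ker(i\lambda I-\mathcal A_j)$, which is trivial by \eqref{Bresse Equation 2.25}, so $V=0$. The Fredholm alternative then gives a unique solution $V$ of the weak problem for every datum; recovering $v^2,v^4,v^6$ from the eliminated relations (which keep them in the required $H^1$ spaces) and reading the flux equations back off shows $U\in D(\mathcal A_j)$ with $(i\lambda I-\mathcal A_j)U=F$. The single genuine obstacle is the loss of coercivity caused by the $-\lambda^2\rho$ term; the whole argument rests on converting it into a compact perturbation and then feeding the already established triviality of the kernel into the Fredholm alternative. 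Combined with Lemma \ref{Bresse Lemma 2.6}, this yields $i\mathbb R\subset\rho(\mathcal A_j)$, which is precisely what the Arendt-Batty criterion needs to conclude Theorem \ref{Bresse Theorem 2.5}.
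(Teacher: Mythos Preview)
Your proposal is correct and follows essentially the same route as the paper: eliminate the velocity components, reduce to a second-order variational problem for $(v^1,v^3,v^5)$, show that the principal part is an isomorphism via Lax--Milgram and the norm equivalence of Remark~\ref{Remark 2.2}, treat the $-\lambda^2\rho$ term as a compact perturbation, and close with the Fredholm alternative together with Lemma~\ref{Bresse Lemma 2.6}. The only cosmetic differences are that the paper phrases the argument in operator form ($\mathcal L:(H^1_0)^3\to(H^{-1})^3$ with $\mathcal I$ the compact embedding) rather than in sesquilinear-form language, and it restricts to the illustrative case $D_1=D_3=0$, whereas you handle the general case with all three dampings present.
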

\begin{proof} We will prove Lemma \ref{Bresse Lemma 2.7} in the case $D_1=D_3=0$ on $(0,L)$ and $D_2 \geq d_0 >0$ on $(\alpha, \beta) \subset (0,L)$
and the other cases are similar to prove.\\
	Since $0\in\rho\left(\mathcal{A}_j\right)$,  we  still need to show the result for $\lambda\in\mathbb{R^*}$.
For any  $$F=\left(f^1,f^2,f^3,f^4,f^5,f^6\right)^{\mathsf{T}}\in\mathcal{H}_j,\ \lambda\in \mathbb{R}^*,$$ 
we prove the existence of $$U=\left(v^1,v^2,v^3,v^4,v^5,v^6\right)^{\mathsf{T}}\in D\left(\mathcal{A}_j\right)$$ solution of the following equation:
	\begin{equation}\label{surj}
\left(\ i\lambda I -\mathcal{A}_j\right)U=F.
	\end{equation}
	Equivalently, we have the following system:
	\begin{eqnarray}
\ i\lambda v^1-v^2&=&f^1, \label{Bresse Equation 2.41}
\\
\ \rho_1\ i\lambda v^2-k_1 \left(v^1_x+v^3+\ell  v^5\right)_x-\ell k_3\left(v^5_x-\ell  v^1\right)&=&\rho_1f^2,\label{Bresse Equation 2.42}
\\
\ i\lambda v^3-v^4&=&f^3,\label{Bresse Equation 2.43}
\\
\rho_2\ i\lambda v^4-(k_2v^3_x+D_2v^4_x)_x+ k_1\left(v^1_x+ v^3 +\ell v^5\right)
&=&\rho_2 f^4,\label{Bresse Equation 2.45}
\\
\	i \lambda v^5 -v^6 &=& f^5,\label{Bresse Equation 2.46}
\\
\rho_1\ i\lambda v^6-k_3\left(v^5_x-\ell  v^1\right)_x+\ell  k_1\left(v^1_x+v^3+\ell  v^5\right)&=&\rho_1 f^6 .\label{Bresse Equation 2.47}
	\end{eqnarray}
	From \eqref{Bresse Equation 2.41},\eqref{Bresse Equation 2.43} and \eqref{Bresse Equation 2.46}, we have:
	\begin{equation}\label{Bresse Equation 2.49}
v^2=\ i\lambda v^1-f^1,\quad v^3=\ i\lambda v^3-f^3,\quad v^6=\ i\lambda v^5-f^5.
	\end{equation}
	Inserting \eqref{Bresse Equation 2.49} in \eqref{Bresse Equation 2.42}, \eqref{Bresse Equation 2.45} and \eqref{Bresse Equation 2.47}, we get:
	\begin{equation}\label{Bresse Equation 2.44}
\left\{
\begin{array}{ll}
\displaystyle{-\lambda^2 v^1-k_1 \rho_1^{-1}\left(v^1_x+v^3+\ell  v^5\right)_x-\ell  k_3\rho_1^{-1}\left(v^5_x-\ell v^1\right)
	=h^1,}
\\ \noalign{\medskip}
\displaystyle{-\lambda^2 v^3-\rho_2^{-1}\left(k_2 +i \lambda D_2\right) v^3_{xx}+k_1\rho_2^{-1}\left(v^1_x+v^3+\ell  v^5\right)=h^3 ,}
\\ \noalign{\medskip}
\displaystyle{-\lambda^2 v^5-k_3\rho_1^{-1}\left(v^5_x-\ell  v^1\right)_x+\ell  k_1\rho_1^{-1}\left(v^1_x+v^3+\ell  v^5\right)=h^5 ,}
\end{array}
\right.
	\end{equation}
	where
	\begin{equation*}
\displaystyle{h^1=f^2+\ i\lambda f^1,\quad h^3=f^4+\ i\lambda f^3- \rho_2^{-1}D_2f^3_{xx},\quad h^5=f^6+i \lambda f^5.}
	\end{equation*}
	For all $v=\left(v^1,v^3,v^5\right)^{\mathsf{T}}\in \left(H^1_0(0,L)\right)^3$ for $j=1$ and $v=\left(v^1,v^3,v^5\right)^{\mathsf{T}}\in H^1_0(0,L)\times H_*^1\left(0,L\right)^2$
for $j=2$, we define the linear operator $\mathcal{L}$ by:
	\begin{equation*}
\mathcal{L}v=\left(\begin{array}{c}
\displaystyle{-k_1 \rho_1^{-1}\left(v^1_x+v^3+\ell  v^5\right)_x-\ell  k_3 \rho_1^{-1}\left(v^5_x-\ell v^1\right)
}
\\  \noalign{\medskip}
\displaystyle{- \rho_2^{-1}(k_2+i \lambda D_2)v^3_{xx}+k_1\rho_2^{-1}\left(v^1_x+v^3+\ell  v^5\right) }
\\  \noalign{\medskip}
\displaystyle{-k_3\rho_1^{-1}\left(v^5_x-\ell  v^1\right)_x+\ell  k_1\rho_1^{-1}\left(v^1_x+v^3+\ell  v^5\right)}
\end{array}\right).
	\end{equation*}
	For clarity, we consider the case $j=1$. The proof in the case $j=2$ {is} very similar. Using Lax-Milgram theorem, it is easy to show that $\mathcal{L}$ is an
isomorphism from $(H^1_0(0,L))^3$ onto $(H^{-1}\left(0,L\right))^3$. Let $v=\left(v^1,v^3,v^5\right)^{\mathsf{T}}$ and  $h=\left(-h^1,-h^3,-h^5\right)^{\mathsf{T}}$, then we transform system \eqref{Bresse Equation 2.44} into the following form:
	\begin{equation}\label{Bresse Equation 2.50}
(\lambda^2\mathcal{I}-\mathcal{L})v=h.
	\end{equation}
	Since the operator $\mathcal{L}$ is an isomorphism from $(H^1_0(0,L))^3$ onto $(H^{-1}\left(0,L\right))^3$ and $\mathcal{I}$ is a compact operator from $(H^1_0(0,L))^3$
onto $(H^{-1}\left(0,L\right))^3$, then using Fredholm's Alternative theorem, problem \eqref{Bresse Equation 2.50} admits a unique solution in 
	$(H^1_0(0,L))^3$ if and only if $\lambda^2\mathcal{I}-\mathcal{L}$ is injective. For that purpose, let $\tilde{v}=\left(\tilde{v}^1,\tilde{v}^3,\tilde{v}^5\right)^{\mathsf{T}}$
in ${\rm ker}(\lambda^2\mathcal{I}-\mathcal{L})$. Then, if we set $\tilde{v}^2=i\lambda \tilde{v}^1$, $\tilde{v}^4=i\lambda \tilde{v}^3$ and $\tilde{v}^6=i\lambda \tilde{v}^5$,
we deduce that $\tilde{V}=(\tilde{v}^1, \tilde{v}^2, \tilde{v}^3, \tilde{v}^5, \tilde{v}^6)$  belongs to  $D(\AA_1)$ and it is solution of:
	$$(i\lambda\mathcal{I}-\AA_1)\tilde{V}=0.$$
	Using Lemma \ref{Bresse Lemma 2.6}, we deduce that $\tilde{v}^1=\tilde{v}^3=\tilde{v}^5=0$. This implies that equation \eqref{Bresse Equation 2.50}
admits a unique solution in $v=(v^1, v^3, v^5)\in (H^1_0(0,L))^3$ and 
	$$ \displaystyle{-k_1 \rho_1^{-1}\left(v^1_x+v^3+\ell  v^5\right)_x-\ell  k_3 \rho_1^{-1}\left(v^5_x-\ell v^1\right)}\in L^2(0,L),$$
	$$\displaystyle{- \rho_2^{-1}(k_2v^3_{x}+i \lambda D_2v^3_{x}-D_2f^3_{x})_x+k_1\rho_2^{-1}\left(v^1_x+v^3+\ell  v^5\right) }\in L^2(0,L),$$ 
	$$\displaystyle{-k_3\rho_1^{-1}\left(v^5_x-\ell  v^1\right)_x+\ell  k_1\rho_1^{-1}\left(v^1_x+v^3+\ell  v^5\right)}\in L^2(0,L).$$
	By setting $v^2=\ i\lambda v^1-f^1$, $v^3=\ i\lambda v^3-f^3$ and  $v^6=\ i\lambda v^5-f^5$, we deduce that {$V=(v^1,v^2,v^3,v^4,v^5,v^6)$} belongs to $D(\AA_1)$
	and it is the unique solution of equation \eqref{surj} and the proof is thus complete. 
	\end{proof}
\begin{proof}[Proof of Theorem \ref{Bresse Theorem 2.5}.]Following a general criteria of Arendt-Batty in \cite{Arendt-Batty88}, the $C_0-$semigroup $e^{t\mathcal{A}_j}$
of contractions is strongly  stable if $\mathcal{A}_j$ has no pure imaginary eigenvalues and $\sigma(\mathcal{A}_j)\cap i\mathbb{R}$ is countable.
By Lemma \ref{Bresse Lemma 2.6}, the operator 
$\mathcal{A}_j$ has no pure imaginary eigenvalues and by Lemma \ref{Bresse Lemma 2.7}, ${\rm R}(\ i\lambda -\mathcal{A}_j)=\mathcal{H}_j$  for all $\lambda\in\mathbb{R}$.
Therefore the closed graph theorem of Banach  implies that $\sigma(\mathcal{A}_j)\cap i\mathbb{R}=\emptyset$. Thus, the proof is complete.
\end{proof}
\section{Polynomial stability for non smooth damping coefficients at the interface}\label{Polynomial1}
Before we state our main result, we recall the following results (see \cite{Huang-85}, \cite{Pruss-84} for part \textit{i)}, \cite{Borichev-Tomilov10}
for \textit{ii)} and \cite{Pazy83} for \textit{iii)}.
\begin{thm}\label{type} 
	Let $\AA: D(\AA) \subset \HH \rightarrow \HH$ be an unbounded operator generating a $C_0$-semigroup of contractions $e^{t\AA}$ on $\HH$. Assume that $i\lambda \in \rho(\AA)$, for all $\lambda \in \mathbb{R}$. Then, the $C_0$-semigroup $e^{t\AA}$ is: \\
	i) Exponentially stable if and only if 
	$$\lim_{|\lambda| \rightarrow + \infty}\bigg\{\sup_{\lambda \in \mathbb{R}} \norm{\left(i \lambda I -\AA\right)^{-1}}_{\mathcal{L}{(\HH)}}\bigg\} < +\infty.$$	
	ii) Polynomially stable of order $\frac{1}{l}$ $(l>0)$	if and only if
	$$\lim_{|\lambda| \rightarrow + \infty}\bigg\{\sup_{\lambda \in \mathbb{R}} |\lambda|^{-l}\norm{\left(i \lambda I -\AA\right)^{-1}}_{\mathcal{L}{(\HH)}} \bigg\}< +\infty.$$	
	iii) Analytically stable if and only if
	$$\lim_{|\lambda| \rightarrow + \infty}\bigg\{\sup_{\lambda \in \mathbb{R}} |\lambda| \norm{\left(i \lambda I -\AA\right)^{-1}}_{\mathcal{L}{(\HH)}} \bigg\}< +\infty.$$
\end{thm}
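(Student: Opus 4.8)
The statement gathers three abstract, frequency-domain characterizations of decay for contraction semigroups on a Hilbert space, so my plan is not to prove anything genuinely new but to assemble the classical arguments behind each part; the common engine throughout is the Hilbert-space structure, which lets one pass between resolvent bounds on $i\rr$ and $L^2$-estimates of the orbits $t \mapsto e^{t\AA}U_0$. In all three parts the ``only if'' (necessity) directions are the routine ones: they follow by writing the resolvent as the Laplace transform of the semigroup,
\[
(i\lambda I - \AA)^{-1} = \int_0^{+\infty} e^{-i\lambda t}\, e^{t\AA}\, dt,
\]
valid for $\lambda \in \rr$ once $i\rr \subset \rho(\AA)$, and reading off the claimed growth in $\lambda$ from the assumed decay in $t$. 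The substance is in the ``if'' directions.

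For part \textit{i)}, the Gearhart--Pr\"uss--Huang theorem, I would argue as follows. Assuming $M := \sup_{\lambda \in \rr}\norm{(i\lambda I - \AA)^{-1}}_{\mathcal{L}(\HH)} < +\infty$, fix $U_0 \in \HH$ and consider the $\HH$-valued map $\lambda \mapsto (i\lambda I - \AA)^{-1}U_0$. By the Plancherel theorem this map lies in $L^2(\rr;\HH)$ with norm controlled by $M\norm{U_0}$, and one identifies its inverse Fourier transform with the orbit, yielding $\int_0^{+\infty}\norm{e^{t\AA}U_0}^2\,dt \le C\,M^2\norm{U_0}^2$. A theorem of Datko then upgrades this uniform square-integrability of all orbits to exponential stability, the contraction hypothesis supplying the uniform boundedness needed to invoke it. This is the Hilbert-space argument of \cite{Pruss-84, Huang-85}.

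For part \textit{ii)}, the Borichev--Tomilov theorem, the necessity is again obtained from the integral representation of the resolvent combined with the assumed decay on $D(\AA)$. The sufficiency is the hard part and the main obstacle of the whole statement: from the polynomial resolvent bound $\norm{(i\lambda I - \AA)^{-1}}_{\mathcal{L}(\HH)} = O(|\lambda|^{l})$ one must recover the sharp algebraic rate $t^{-1/l}$ on smooth data. Here I would follow \cite{Borichev-Tomilov10}, combining the estimate on $i\rr$ with an analysis of the operator-valued resolvent on a parabolic neighbourhood of the imaginary axis and a Plancherel/interpolation argument that converts the growth exponent $l$ into the decay exponent $1/l$; controlling the intermediate frequencies and the passage to the smoothed data $\AA^{-1}U_0$ is the delicate bookkeeping.

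For part \textit{iii)}, I would use the standard characterization of bounded analytic semigroups. The estimate $\norm{(i\lambda I - \AA)^{-1}}_{\mathcal{L}(\HH)} \le C/|\lambda|$ for large $|\lambda|$ lets me expand the resolvent in a Neumann series about each point $i\lambda$,
\[
(z I - \AA)^{-1} = \sum_{n \ge 0}\bigl(i\lambda - z\bigr)^{n}\,(i\lambda I - \AA)^{-(n+1)},
\]
which converges once $|i\lambda - z| < |\lambda|/C$ and shows that the resolvent extends, with the bound $O(1/|z|)$, to a sector $\set{|\arg z| < \tfrac{\pi}{2}+\delta}$ outside a bounded disc. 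Since the hypothesis $i\rr \subset \rho(\AA)$ already controls the resolvent near the origin, this is exactly the sectorial resolvent criterion for analyticity of \cite{Pazy83}, and the converse is immediate from the standard derivative bounds for analytic semigroups. The only point requiring care is matching the high-frequency sectorial estimate continuously with the behaviour in the bounded frequency range.
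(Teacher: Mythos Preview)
The paper does not prove this theorem at all: it is stated as a recollection of known results, with references to \cite{Huang-85} and \cite{Pruss-84} for part \textit{i)}, to \cite{Borichev-Tomilov10} for part \textit{ii)}, and to \cite{Pazy83} for part \textit{iii)}. Your sketch therefore goes well beyond what the paper does, and the strategies you outline are precisely those of the cited references, so in that sense there is nothing to compare.

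One small technical caveat on your part \textit{i)}: the Laplace representation $(i\lambda I-\AA)^{-1}=\int_0^{+\infty}e^{-i\lambda t}e^{t\AA}\,dt$ is not available directly on the imaginary axis for a mere contraction semigroup; the integral converges a priori only for $\mathrm{Re}\,z>0$. The standard cure is to work first with $(\eps+i\lambda)I-\AA$ for $\eps>0$, apply Plancherel there to get $\int_0^{+\infty}e^{-2\eps t}\norm{e^{t\AA}U_0}^2\,dt\le \sup_{\lambda}\norm{((\eps+i\lambda)I-\AA)^{-1}}^2\norm{U_0}^2$, and then let $\eps\downarrow 0$ using the uniform resolvent bound on $i\rr$ (a resolvent-identity argument pushes the bound slightly into the right half-plane). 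With that adjustment your Datko route is the correct one. The sketches for parts \textit{ii)} and \textit{iii)} are accurate summaries of the arguments in \cite{Borichev-Tomilov10} and \cite{Pazy83}.
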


It was proved that, see \cite{ChenLiuLiu-1998,Liu-Liu-2002}, the stabilization of wave equation with local Kelvin-Voigt damping is greatly influenced by
the smoothness of the damping coefficient and the region where the damping is localized (near or faraway from the boundary) even in the one-dimensional case.
So, in this section, we consider the Bresse systems \eqref{eqq1.1'}-\eqref{DDDD} and  \eqref{eqq1.1'}-\eqref{DNND} subject to three local viscoelastic Kelvin-Voigt
dampings with non smooth coefficients at the interface. Using frequency domain approach combined with multiplier techniques and the construction of {a} new multiplier function,
we establish {the} polynomial stability of the $C_0$-semigroup $e^{t\AA_j}$, $j=1,2$. For this purpose, let $\emptyset \not=(\alpha_i,\beta_i)\subset (0,L)$, $i=1,2,3,$
be {an} arbitrary nonempty open subsets of $(0,L)$. We consider the following stability condition:
\begin{equation} \label{3local.1}
\exists\, \, d_0^i >0 \,\,\, \mbox{such that} \,\, D_i\geq d_0^i >0 \, \, {\rm in} \, \,   (\alpha_i,\beta_i), \, \, i=1,2,3, \, \,
{\rm and} \, \, \bigcap_{i=1}^3(\alpha_i,\beta_i)=(\alpha,\beta)\not=\emptyset.
\end{equation}
Our main result in this section can be given by the following theorem:
\begin{thm}\label{PolyThe.2}
Assume that condition \eqref{3local.1} holds. 
Then, there exists a positive constant $c>0$ such that for all $U_0 \in D{(\AA_j)}$, $j=1,2,$  the energy of the system satisfies the following decay rate:
\begin{equation}\label{polydecay.2}
E(t)\leq  \frac{c}{t}\norm{U_0}^2_{D{(\AA_j)}}.
\end{equation}
\end{thm}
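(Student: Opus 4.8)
The plan is to deduce \eqref{polydecay.2} from the frequency-domain characterization in Theorem \ref{type}, part \textit{ii)}, with $l=2$, arguing by contradiction. Since the proof of Theorem \ref{Bresse Theorem 2.5} already establishes $\sigma(\AA_j)\cap i\mathbb{R}=\emptyset$, the energy decay of type $\tfrac1t$ is equivalent to the resolvent bound
\begin{equation*}
\sup_{\abs{\lambda}\ge 1}\ \frac{1}{\lambda^{2}}\,\norm{(i\lambda I-\AA_j)^{-1}}_{\mathcal{L}(\HH_j)}<+\infty .
\end{equation*}
If this fails, there exist $\lambda_n\in\mathbb{R}$ with $\abs{\lambda_n}\to+\infty$ and $U_n=(v^1,\dots,v^6)^{\mathsf T}\in D(\AA_j)$ with $\norm{U_n}_{\HH_j}=1$ such that $F_n:=(i\lambda_n I-\AA_j)U_n$ obeys $\lambda_n^{2}\norm{F_n}_{\HH_j}\to 0$; the whole task is then to reach the contradiction $\norm{U_n}_{\HH_j}\to 0$. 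Writing $F_n=(f^1,\dots,f^6)^{\mathsf T}$ and dropping the index $n$, the equation gives $i\lambda v^1-v^2=f^1$, $i\lambda v^3-v^4=f^3$, $i\lambda v^5-v^6=f^5$ together with the three momentum equations read off from \eqref{ Equation 2.4}; eliminating $v^2,v^4,v^6$ turns these into a second-order elliptic system for $(v^1,v^3,v^5)$.

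The first step is the dissipation identity \eqref{ Equation 2.14}: taking the real part of $(F,U)_{\HH_j}$,
\begin{equation*}
\int_0^L\Big(D_1\abs{v^2_x+v^4+\ell v^6}^2+D_2\abs{v^4_x}^2+D_3\abs{v^6_x-\ell v^2}^2\Big)\,dx=\mathrm{Re}(F,U)_{\HH_j}=o(\lambda^{-2}).
\end{equation*}
Because $D_i\ge d_0^i>0$ on $(\alpha_i,\beta_i)$ by \eqref{3local.1}, and $f^1,f^3,f^5$ are bounded in $H^1(0,L)$ by $\norm{F}_{\HH_j}$, substituting $v^2=i\lambda v^1-f^1$, $v^4=i\lambda v^3-f^3$, $v^6=i\lambda v^5-f^5$ yields, on the common interval $(\alpha,\beta)=\bigcap_i(\alpha_i,\beta_i)$,
\begin{equation*}
\lambda\big(v^1_x+v^3+\ell v^5\big),\quad \lambda\, v^3_x,\quad \lambda\big(v^5_x-\ell v^1\big)\ \longrightarrow\ 0\qquad\text{in }L^2(\alpha,\beta).
\end{equation*}
Thus all three elastic strains, weighted by $\lambda$, are small on $(\alpha,\beta)$, which controls the strain part of the energy norm \eqref{ Equation 2.3} there with room to spare.

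The second step upgrades this to the vanishing of the full local energy on $(\alpha,\beta)$, that is, also of the velocities $v^2,v^4,v^6$. I would test the three momentum equations against cutoff multipliers such as $h\,\overline{v^2}$, $h\,\overline{v^4}$, $h\,\overline{v^6}$ and their $x$-derivatives, with $h\in C^\infty_c(\alpha,\beta)$; since the Kelvin-Voigt contributions are already $o(\lambda^{-2})$ on $\mathrm{supp}\,h$, integration by parts expresses the local kinetic integrals $\int h\,\rho_i\abs{v^k}^2\,dx$ through the (small) strains and $F$. The careful accounting of powers of $\lambda$ in these identities is exactly what fixes the exponent $l=2$, hence the rate $\tfrac1t$. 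The genuinely hard part is the third step, namely propagating this smallness to the undamped parts $(0,\alpha)$ and $(\beta,L)$, where no dissipation is available and one must exploit only the coupling of the three waves together with the smallness of the traces at the interfaces. Here I would introduce a tailored, piecewise-defined weight $p(x)$ used in multipliers of the form $p\,q_x$ — the \emph{new multiplier function} announced in the statement — vanishing at the outer boundary and matched across the interface, designed so that the terms generated at $\alpha,\beta$ by the jump of the non-smooth coefficients $D_i$ are absorbed rather than left uncontrolled.

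Combining the interior estimate on $(\alpha,\beta)$ with the propagated estimates on $(0,\alpha)\cup(\beta,L)$ forces every term of $\norm{U_n}_{\HH_j}^2$ in \eqref{ Equation 2.3} to zero, contradicting $\norm{U_n}_{\HH_j}=1$, and thereby proves the resolvent bound and \eqref{polydecay.2}. I expect the main obstacle to be precisely the non-smoothness of $D_1,D_2,D_3$ at $\alpha$ and $\beta$: the usual smooth-coefficient elliptic-regularity and transmission arguments break down there, so the weight $p$ and the multiplier combinations must be chosen so that the interface boundary terms cancel or are dominated by the dissipation, while still yielding the single power $\lambda^2$ rather than a worse one. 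The same argument runs simultaneously for $j=1,2$, the only change being the underlying space ($H^1_0$ versus $H^1_*$) and the associated boundary contributions, which the weight $p$ is arranged to annihilate.
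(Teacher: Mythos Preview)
Your overall framework matches the paper's: Borichev--Tomilov with $l=2$, the contradiction sequence, the dissipation identity giving $o(\lambda^{-2})$ control of the viscoelastic strain rates on the common interval $(\alpha,\beta)$, and the passage to $o(\lambda^{-2})$ smallness of $v^1_x+v^3+\ell v^5$, $v^3_x$, $v^5_x-\ell v^1$ on $(\alpha,\beta)$. Your second step, recovering local kinetic smallness on a slightly smaller interval via cutoff multipliers, is also what the paper does.

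The gap is in your third step, and it is the heart of the matter. The ``new multiplier function'' is \emph{not} a piecewise weight $p(x)$ used in expressions $p\,q_x$; it is the solution $(u,y,z)$ of an \emph{auxiliary Bresse system} carrying \emph{frictional} (not viscoelastic) damping $-i\lambda\,\mathds{1}_{\omega_\epsilon}$ localized on $\omega_\epsilon=(\alpha+\epsilon,\beta-\epsilon)$, with right-hand side $(v^1,v^3,v^5)$. Since that frictionally damped Bresse system is exponentially stable, its resolvent is uniformly bounded on $i\mathbb{R}$, so $\lambda u,\lambda y,\lambda z$ and $u_x,y_x,z_x$ are all bounded in $L^2(0,L)$ by $C\norm{(v^1,v^3,v^5)}=O(\lambda^{-1})$. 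Testing the three original momentum equations against $(\bar u,\bar y,\bar z)$ and summing, the second-order operator on the auxiliary side reproduces exactly $(v^1,v^3,v^5)$ plus the frictional term $i\lambda\,\mathds{1}_{\omega_\epsilon}(\bar u,\bar y,\bar z)$; the latter is handled by the local kinetic smallness already obtained on $\omega_\epsilon$, while the Kelvin--Voigt contributions pair only against the \emph{smooth} functions $u_x,y_x,z_x,u,y,z$ and are killed by the dissipation estimate. This yields $\int_0^L|\lambda v^i|^2\,dx=o(1)$ \emph{globally} in one stroke, after which $\int_0^L|v^i_x|^2\,dx=o(1)$ follows by a direct energy pairing.

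Your proposed $p\,q_x$ propagation is the classical device for frictional damping, but here the principal part contains $(D_i\,\cdot\,)_x$ with $D_i\in L^\infty$ only; integrating by parts against $p\,\overline{q_x}$ forces a derivative onto the Kelvin--Voigt flux and produces interface contributions at $\alpha,\beta$ with no regularity available to absorb them. You anticipate this obstacle but do not resolve it. The auxiliary-system multiplier sidesteps it entirely, because one never differentiates $D_i$ and one never needs trace control at the interface: the global $L^2$ estimate comes from the uniform resolvent bound of an \emph{exponentially stable} companion problem, not from propagation along characteristics.
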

Referring to \cite{Borichev-Tomilov10}, \eqref{polydecay.2} is verified if the following conditions
\begin{equation*}\tag {\rm H1}
\ i\mathbb{R}\subseteq\rho\left(\mathcal{A}_j\right) 
\end{equation*}
and
\begin{equation*}\tag {\rm H3}
\lim_{\lambda \rightarrow +\infty} \sup_{\lambda\in\mathbb{R}}\bigg\{\frac{1}{\lambda^2}\left\|\left(i\lambda Id-\mathcal{A}_j\right)^{-1}\right\|_{\mathcal{L}\left(\mathcal{H}_j\right)}\bigg\}=O\left(1\right)
\end{equation*}
hold. \\
Condition  $\ i\mathbb{R}\subseteq\rho\left(\mathcal{A}_j\right)$ is  already proved in Lemma \ref{Bresse Lemma 2.6} and Lemma \ref{Bresse Lemma 2.7}. 

We will establish (H3) by  contradiction. Suppose that there exist a sequence of real numbers $\left(\lambda_n\right)_n$, with  $|\lambda_n|\to+\infty$  and a sequence of vectors
\begin{equation}\label{Bresse equation 2.1}
	U_n=\left(v^1_n,v^2_n,v_n^3,v^4_n,v^5_n,v^6_n\right)^{\mathsf{T}}\in D\left(\mathcal{A}_j\right) \ \text{ with }\ \|U_n\|_{\mathcal{H}_j}=1
\end{equation}
such that
\begin{equation}\label{Bresse equation 2.2}
	\lambda_n^2\left(\ i\lambda_nU_n-\mathcal{A}_jU_n\right)=\left(f^1_n,f^2_n,f_n^3,f^4_n,f^5_n,f^6_n\right)^{\mathsf{T}}\to 0\ \text{ in } \mathcal{H}_j,\quad j=1,2.
\end{equation}
We will check the condition (H3) by finding a contradiction with \eqref{Bresse equation 2.1}-\eqref{Bresse equation 2.2} such as $\left\| U_n\right\|_{\mathcal{H}_j} =o(1)$.

Equation \eqref{Bresse equation 2.2} is detailed as:
\begin{eqnarray}
	\ i\lambda_n v^1_n-v^2_n&=&\frac{f^1_n}{\lambda_n^2},\label{Bresse equation 2.3}
	\\
	i\rho_1\lambda_n v^2_n-\left[k_1 \left(\left(v^1_n\right)_x+v^3_n+\ell  v^5_n\right) +D_1 \left( \left(v^2_n\right)_x+v_n^4+\ell v^6_n\right)\right]_x\nonumber \\-  \ell  k_3\left[\left(v^5_n\right)_x-\ell  v^1_n\right] - \ell  D_3 \left[\left(v_n^6\right)_x - \ell  v_n^2\right]& =& \rho_1\frac{f^2_n}{\lambda_n^2},\label{Bresse equation 2.4}
	\\
	\ i\lambda_n v^3_n-v^4_n&=&\frac{f^3_n}{\lambda_n^2},\label{Bresse equation 2.5}
	\\
	\ i \rho_2 \lambda_n v^4_n -\left[k_2 \left(v^3_n\right)_x + D_2 \left(v^4_n\right)_x\right]_{x}+ k_1\left[\left(v^1_n\right)_x+v^3_n+\ell  v^5_n\right]+D_1 \left[\left(v^2_n\right)_x +v^4_n+ \ell  v^6_n\right]
	&=& \rho_2\frac{f^4_n}{\lambda_n^2},\label{Bresse equation 2.6}
	\\
	i\lambda_n v^5_n-v^6_n&=& \frac{f^5_n}{\lambda_n^2},\label{Bresse equation 2.7}\\
	\ i \rho_1 \lambda_n v^6_n-\left[k_3\left(\left(v^5_n\right)_x-\ell  v^1_n\right) + D_3 \left(\left(v^6_n\right)_x - \ell  v^2_n\right)\right]_x + \ell  k_1\left[\left(v^1_n\right)_x+v^3_n+\ell  v^5_n\right] \nonumber\\
	+ \ell  D_1 \left[\left(v^2_n\right)_x+ v^4_n +\ell v_n^6\right]&=&\rho_1 \frac{f^6_n}{\lambda_n^2}.\label{Bresse equation 2.8}
\end{eqnarray}
From \eqref{Bresse equation 2.1}, \eqref{Bresse equation 2.3}, \eqref{Bresse equation 2.5} and \eqref{Bresse equation 2.7}, we deduce that:
\begin{equation}\label{Bresse equation 2.9}
	\norm{v^1_n}=O(\frac{1}{\lambda_n}), \ \  \norm{v^3_n}=O(\frac{1}{\lambda_n}), \ \
	\norm{v^5_n}=O(\frac{1}{\lambda_n}).
\end{equation}
For clarity, we divide the proof into several lemmas. From  now on, for simplicity, we drop the index $n$. 
\begin{lem}\label{Bresse polydiss.1}
	Under all the above assumptions, we have:
	\begin{equation}\label{Bresse equation 2.10} 
\norm{D_1^{1/2}\left(v^2_{x}+v^4 +\ell v^6\right)}=\frac{o\left(1\right)}{\lambda}, \ \ \norm{D_2^{1/2}v^4_{x}}=\frac{o\left(1\right)}{\lambda},\ \ \norm{D_3^{1/2}\left(v^6_{x}-\ell v^2\right)}=\frac{o\left(1\right)}{\lambda} 
	\end{equation}
	and 
	\begin{equation} \label{Bresse equation 2.11}
\norm{v^2_{x}+v^4 +\ell v^6}=\frac{o\left(1\right)}{\lambda}, \ \ \norm{v^4_{x}}=\frac{o\left(1\right)}{\lambda},\ \ \norm{v^6_{x}-\ell v^2}=\frac{o\left(1\right)}{\lambda} \quad \mathrm{in} \quad \left(\alpha,\beta\right).
	\end{equation}
\end{lem}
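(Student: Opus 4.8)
The plan is to read these estimates straight off the dissipation identity \eqref{ Equation 2.14}, exactly as in the proof of Lemma \ref{Bresse Lemma 2.6}, but now keeping careful track of the decay rate supplied by the scaling factor $\lambda^2$ in \eqref{Bresse equation 2.2}. First I would divide \eqref{Bresse equation 2.2} by $\lambda^2$ to rewrite the resolvent equation as $i\lambda U-\mathcal{A}_jU=\lambda^{-2}(f^1,\dots,f^6)^{\mathsf{T}}$, whose right-hand side tends to $0$ in $\mathcal{H}_j$, and take the inner product in $\mathcal{H}_j$ against $U$ itself (recall $\norm{U}_{\mathcal{H}_j}=1$). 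Taking real parts and observing that $\mathrm{Re}\,(i\lambda U,U)_{\mathcal{H}_j}=i\lambda\norm{U}^2_{\mathcal{H}_j}$ is purely imaginary and hence contributes nothing, I obtain
\begin{equation*}
-\mathrm{Re}\,(\mathcal{A}_jU,U)_{\mathcal{H}_j}=\mathrm{Re}\,\Big(\tfrac{1}{\lambda^2}F,U\Big)_{\mathcal{H}_j}.
\end{equation*}

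By \eqref{ Equation 2.14} the left-hand side is precisely the sum of the three weighted integrals $\int_0^L\!\big(D_1|v_x^2+v^4+\ell v^6|^2+D_2|v_x^4|^2+D_3|v_x^6-\ell v^2|^2\big)\,dx$, while the right-hand side is bounded by Cauchy--Schwarz by $\lambda^{-2}\norm{F}_{\mathcal{H}_j}\norm{U}_{\mathcal{H}_j}=o(1)\lambda^{-2}$. Since all three integrands are non-negative, each integral is individually $o(1)\lambda^{-2}$; taking square roots delivers the three weighted estimates \eqref{Bresse equation 2.10}. To pass to the unweighted estimates \eqref{Bresse equation 2.11} I would invoke the structural hypothesis \eqref{3local.1}: on the common subinterval $(\alpha,\beta)\subset(\alpha_i,\beta_i)$ every coefficient satisfies $D_i\ge d_0^i>0$, so for instance
\begin{equation*}
d_0^1\int_\alpha^\beta|v_x^2+v^4+\ell v^6|^2\,dx\le\int_0^L D_1|v_x^2+v^4+\ell v^6|^2\,dx=\frac{o(1)}{\lambda^2},
\end{equation*}
and the analogous lower bounds for $D_2$ and $D_3$ on $(\alpha,\beta)$ give the remaining two. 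Dividing by the positive constants $d_0^i$ and taking square roots yields \eqref{Bresse equation 2.11}.

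I expect no real obstacle in this lemma; it is the standard opening move of the frequency-domain contradiction argument. The only points requiring care are, first, the bookkeeping of the exponent of $\lambda$: the factor $\lambda^2$ in \eqref{Bresse equation 2.2} is exactly what forces the weighted $L^2$-norms to be of order $o(1)\lambda^{-1}$ rather than $o(1)$, and this $\lambda^{-1}$ rate is what the subsequent lemmas will rely on when they control the remaining components and cross terms. Second, the passage from the globally weighted estimate to the localized unweighted one is elementary but genuinely uses the fact that the three damping regions overlap on a common interval $(\alpha,\beta)$ as guaranteed by \eqref{3local.1}; without this overlap the three bounds in \eqref{Bresse equation 2.11} would only hold on three distinct subintervals, which would not suffice for the multiplier estimates to come.
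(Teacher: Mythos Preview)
Your proposal is correct and follows essentially the same approach as the paper: both take the real part of the inner product of the resolvent equation with $U$, invoke the dissipation identity \eqref{ Equation 2.14}, and then use the lower bound $D_i\ge d_0^i$ on $(\alpha,\beta)$ from \eqref{3local.1} to pass to the unweighted local estimates. The only cosmetic difference is that you divide \eqref{Bresse equation 2.2} by $\lambda^2$ before pairing with $U$, whereas the paper pairs the scaled equation directly and then reads off that $\lambda^2\int_0^L(\cdots)dx=o(1)$; the content is identical.
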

\begin{proof}Taking the inner product of \eqref{Bresse equation 2.2} with $U$ in $\mathcal{H}_j$, we get:
\begin{align}\label{Bresse equation 2.12}
\mathrm{Re}\left(\ i\lambda^3 \norm{U}^2-\lambda^2\left(\mathcal{A}_jU,U\right)\right)_{\mathcal{H}_j}&=-\lambda^2\mathrm{Re}\left(\mathcal{A}_jU,U\right)_{\mathcal{H}_j} \nonumber\\
&
=\lambda^2\int_0^L \left(D_1|v^2_{x}+v^4 +\ell v^6|^2 +D_2|v^4_{x}|^2 + D_3|v^6_{x}-\ell v^2|^2\right)dx= o\left(1\right).
\end{align}
Thanks to \eqref{3local.1}, we obtain the desired asymptotic equations \eqref{Bresse equation 2.10} and \eqref{Bresse equation 2.11}. Thus the proof is complete.
\end{proof}
{
\begin{rk}
These estimates are crucial for the rest of the prooof and they will be used to prove each point of the global proof  divided in several lemmas.
\end{rk}
}
\begin{lem}\label{Bresse polydiss2}
Under all the above assumptions, we have:
\begin{equation} \label{Bresse equation 2.13}
\norm{v^1_{x}+v^3 +\ell v^5}=\frac{o\left(1\right)}{\lambda^2}, \ \ \norm{v^3_{x}}=\frac{o\left(1\right)}{\lambda^2},\ \ \norm{v^5_{x}-\ell v^1}=\frac{o\left(1\right)}{\lambda^2} \quad \mathrm{in} \quad \left(\alpha,\beta\right).
\end{equation}
\end{lem}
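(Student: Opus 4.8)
The plan is to upgrade the velocity-type estimates of Lemma \ref{Bresse polydiss.1} into the displacement-type estimates \eqref{Bresse equation 2.13}, gaining one power of $\lambda$. The mechanism is purely kinematic: the relations \eqref{Bresse equation 2.3}, \eqref{Bresse equation 2.5} and \eqref{Bresse equation 2.7} express the velocities directly in terms of the displacements,
\begin{equation*}
v^2 = i\lambda v^1 - \frac{f^1}{\lambda^2}, \qquad v^4 = i\lambda v^3 - \frac{f^3}{\lambda^2}, \qquad v^6 = i\lambda v^5 - \frac{f^5}{\lambda^2}.
\end{equation*}
Since $v^1,v^3,v^5$ and $f^1,f^3,f^5$ all lie in $H^1(0,L)$, I would differentiate these identities in $x$ and assemble exactly the three combinations that appear in \eqref{Bresse equation 2.11}, obtaining
\begin{align*}
v^2_x + v^4 + \ell v^6 &= i\lambda\left(v^1_x + v^3 + \ell v^5\right) - \frac{1}{\lambda^2}\left(f^1_x + f^3 + \ell f^5\right), \\
v^4_x &= i\lambda\, v^3_x - \frac{f^3_x}{\lambda^2}, \\
v^6_x - \ell v^2 &= i\lambda\left(v^5_x - \ell v^1\right) - \frac{1}{\lambda^2}\left(f^5_x - \ell f^1\right).
\end{align*}

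Next I would solve each identity for the displacement combination, divide by $i\lambda$, and take the $L^2(\alpha,\beta)$ norm. The leading bracketed term on each right-hand side is controlled by Lemma \ref{Bresse polydiss.1}: on $(\alpha,\beta)$ the quantities $\norm{v^2_x+v^4+\ell v^6}$, $\norm{v^4_x}$ and $\norm{v^6_x-\ell v^2}$ are all of size $o(1)/\lambda$. The forcing terms $f^1_x+f^3+\ell f^5$, $f^3_x$ and $f^5_x-\ell f^1$ are precisely the three quantities whose $L^2(0,L)$ norms enter the energy norm $\norm{F}_{\mathcal{H}_j}$ through the $k_1$, $k_2$, $k_3$ contributions in \eqref{ Equation 2.3}; since $F\to 0$ in $\mathcal{H}_j$ by \eqref{Bresse equation 2.2}, each of them is $o(1)$, so $\lambda^{-2}$ times any of them is $o(1)/\lambda^2$. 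Combining, each displacement combination is bounded on $(\alpha,\beta)$ by $\lambda^{-1}\big(o(1)/\lambda + o(1)/\lambda^2\big) = o(1)/\lambda^2$, which is exactly \eqref{Bresse equation 2.13}.

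There is essentially no analytical obstacle here; this lemma is a bookkeeping step whose role is to transfer the dissipation-induced smallness from the velocities to the displacements. The two points that demand care are (i) that the forcing contributions must be shown to be $o(1)$ rather than merely $O(1)$, which is exactly where the structure of the energy norm together with the strong convergence $F\to 0$ in $\mathcal{H}_j$ is used, and (ii) that the conclusion is only local to $(\alpha,\beta)$. The latter restriction is inherited from Lemma \ref{Bresse polydiss.1}, where pointwise-in-$x$ control of the three damped combinations is available only on the common damping region $(\alpha,\beta)$ on which every $D_i\geq d_0^i>0$. These local estimates are meant to serve, further on in the global argument, as the seed from which smallness of $U$ is propagated outside $(\alpha,\beta)$.
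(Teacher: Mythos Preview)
Your proposal is correct and follows essentially the same approach as the paper: both arguments substitute the kinematic relations \eqref{Bresse equation 2.3}, \eqref{Bresse equation 2.5}, \eqref{Bresse equation 2.7} into the three damped combinations, isolate the displacement combinations, and invoke the estimates \eqref{Bresse equation 2.11} together with $F\to 0$ in $\mathcal{H}_j$ to gain the extra factor of $\lambda$. Your observation that the forcing combinations $f^1_x+f^3+\ell f^5$, $f^3_x$, $f^5_x-\ell f^1$ are exactly the ones controlled by the energy norm \eqref{ Equation 2.3} is a nice sharpening of the paper's phrasing, which simply says that $f^1,f^3,f^5$ converge to zero in $H^1$.
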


\begin{proof} First, using equations \eqref{Bresse equation 2.3}, \eqref{Bresse equation 2.5} and \eqref{Bresse equation 2.7}, we obtain:
\begin{equation}
\lambda \left(v^1_x +v^3 +\ell  v^5\right)= -i(v_x^2 +\frac{f_x^1}{\lambda^2}+v^4+\frac{f^3}{\lambda^2}+\ell v^6 +\ell \frac{f^5}{\lambda^2}).
\end{equation}
Consequently, 
\begin{equation}\label{Bresse Equation 12.2}
\int_{\alpha}^{\beta}\lambda^2 |v^1_x +v^3 +\ell  v^5|^2dx \leq 2\int_{\alpha}^{\beta}|v_x^2 +v^4+\ell v^6|^2dx + 2\int_{\alpha}^{\beta}\frac{|f_x^1 +f^3+\ell f^5|^2}{\lambda^4}dx.
\end{equation}
Using the first  {estimate} of \eqref{Bresse equation 2.11} and the fact that $f^1$, $f^3$, $f^5$ converge to zero in $H_0^1(0,L)$ (or in $H_\star^1(0,L)$) in \eqref{Bresse Equation 12.2}, 
we deduce:
\begin{equation}
\int_{\alpha}^{\beta}\lambda^2 |v^1_x +v^3 +\ell  v^5|^2dx = \frac{o(1)}{\lambda^2}.
\end{equation}
In a similar way, one can prove:
\begin{equation}
\int_{\alpha}^{\beta}\lambda^2 |v^3_x|^2dx=\frac{o(1)}{\lambda^2} \quad \mathrm{and} \quad
\int_{\alpha}^{\beta}\lambda^2 |v^5_x -\ell  v^1|^2dx = \frac{o(1)}{\lambda^2}.
\end{equation}
The proof is thus complete.
\end{proof}
Here and after $\epsilon$ designates a fixed positive real number such that $0 <\alpha+\epsilon <\beta-\epsilon<  L$. 
Then, we define the cut-off function $\eta \in C_c^{\infty}{\left(\mathbb{R}\right)}$ by: 
\[
\eta =1  \ \ \mathrm{on} \ \ \left[\alpha+\epsilon, \beta-\epsilon\right], \ \ 0 \leq \eta \leq 1, \ \ \eta=0 \ \ \mathrm{on} \ \ \left(0,L\right)\setminus \left(\alpha,\beta\right). 
\]

\begin{lem}
Under all the above assumptions, we have:
\begin{equation}\label{Bresse equation 2.14}
\int_{\alpha+\epsilon}^{\beta-\epsilon}\left|\lambda v^1\right|^2dx=\frac{o\left(1\right)}{\lambda}, \ \
\int_{\alpha+\epsilon}^{\beta-\epsilon}\left|\lambda v^3\right|^2dx=\frac{o\left(1\right)}{\lambda}, \ \
\int_{\alpha+\epsilon}^{\beta-\epsilon}\left|\lambda v^5\right|^2dx=\frac{o\left(1\right)}{\lambda}.
\end{equation}
\end{lem}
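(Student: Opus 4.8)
The plan is to estimate each of the three quantities $\int_{\alpha+\epsilon}^{\beta-\epsilon}\norm{\lambda v^i}^2$ ($i=1,3,5$) separately, by testing the corresponding second-order frequency-domain equation against the \emph{localized} multiplier $\eta\,\overline{v^i}$. Because $\eta$ is compactly supported in $(\alpha,\beta)$ and equals $1$ on $[\alpha+\epsilon,\beta-\epsilon]$, every boundary term produced by integration by parts vanishes, and $\int_{\alpha+\epsilon}^{\beta-\epsilon}|\lambda v^i|^2\le \int_0^L\eta\,|\lambda v^i|^2$, so it suffices to control the latter. The guiding idea is that multiplying each wave equation by $\eta$ times \emph{its own} displacement isolates a positive multiple of $\int_0^L\eta\,|\lambda v^i|^2$ as the principal term, while all remaining terms can be absorbed using the interior estimates already proved.

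First I would treat $v^1$. Substituting $v^2=i\lambda v^1-f^1/\lambda^2$ from \eqref{Bresse equation 2.3} into \eqref{Bresse equation 2.4} turns the inertial term $i\rho_1\lambda v^2$ into $-\rho_1\lambda^2 v^1$ modulo a harmless $f^1/\lambda$ contribution. Multiplying by $\eta\,\overline{v^1}$, integrating over $(0,L)$, and integrating by parts the two terms carrying an outer $\partial_x$ (the elastic stress $k_1(v^1_x+v^3+\ell v^5)$ and the viscous stress $D_1(v^2_x+v^4+\ell v^6)$) leaves $-\rho_1\int_0^L\eta\,|\lambda v^1|^2$ as the leading contribution. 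Every other term is then bounded by Cauchy--Schwarz using three inputs: the interior elastic-strain bounds of Lemma~\ref{Bresse polydiss2}, i.e.\ \eqref{Bresse equation 2.13}, which make $v^1_x+v^3+\ell v^5$, $v^3_x$, $v^5_x-\ell v^1$ of order $o(1)/\lambda^2$ on $(\alpha,\beta)$; the interior dissipation bounds of Lemma~\ref{Bresse polydiss.1}, i.e.\ \eqref{Bresse equation 2.11}, which make $v^2_x+v^4+\ell v^6$, $v^4_x$, $v^6_x-\ell v^2$ of order $o(1)/\lambda$ there; and the global bounds $\norm{v^1},\norm{v^3},\norm{v^5}=O(1/\lambda)$ from \eqref{Bresse equation 2.9}. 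Whenever $v^1_x$ reappears after integration by parts I would rewrite it as $(v^1_x+v^3+\ell v^5)-v^3-\ell v^5$, so that it too is expressed through controlled quantities; the coupling terms $\ell k_3(v^5_x-\ell v^1)$ and $\ell D_3(v^6_x-\ell v^2)$ are estimated directly by \eqref{Bresse equation 2.13} and \eqref{Bresse equation 2.11}. A term-by-term count shows that all these contributions are in fact $o(1)/\lambda^2$, so $\int_0^L\eta\,|\lambda v^1|^2=o(1)/\lambda^2$, which is even stronger than the claimed $o(1)/\lambda$.

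The estimates for $v^3$ and $v^5$ follow the same pattern, testing \eqref{Bresse equation 2.6} against $\eta\,\overline{v^3}$ and \eqref{Bresse equation 2.8} against $\eta\,\overline{v^5}$ after the substitutions \eqref{Bresse equation 2.5} and \eqref{Bresse equation 2.7}. In the $v^3$ equation the derivative $v^3_x$ is itself $o(1)/\lambda^2$, and the viscous term $D_2 v^4_x$ is handled by \eqref{Bresse equation 2.11}; in the $v^5$ equation one writes $v^5_x=(v^5_x-\ell v^1)+\ell v^1$, again reducing to controlled factors. In both cases the inter-equation coupling terms $k_1(v^1_x+v^3+\ell v^5)$, $\ell k_1(v^1_x+v^3+\ell v^5)$ and their viscous analogues enter only paired with a small displacement or a small strain, hence are negligible.

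The computation is structurally routine; the hard part will be the bookkeeping of orders. One must combine the two \emph{different} decay rates furnished by the preceding lemmas ($o(1)/\lambda^2$ for the elastic strains, $o(1)/\lambda$ for the viscous strains) with the a priori rates $O(1/\lambda)$ for displacements and $O(1)$ for velocities, and verify that after integration by parts nothing survives at the critical order $1/\lambda$. The localization by $\eta$ is exactly what makes this possible: it removes the boundary terms that would otherwise block the cancellation, and it confines every factor to $(\alpha,\beta)$, where the interior estimates \eqref{Bresse equation 2.11} and \eqref{Bresse equation 2.13} are available.
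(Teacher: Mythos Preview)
Your approach is essentially the same as the paper's: multiply each second-order equation by $\eta\,\overline{v^i}$, integrate by parts, and use the interior estimates \eqref{Bresse equation 2.10}--\eqref{Bresse equation 2.13} together with \eqref{Bresse equation 2.9} to absorb every remaining term. The paper organises the computation slightly differently (it multiplies \eqref{Bresse equation 2.3} and \eqref{Bresse equation 2.4} separately and then adds), but the substance is identical.

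There is, however, one genuine refinement in your argument. When the factor $\overline{v^1_x}$ appears after integration by parts, the paper simply uses the global bound $\|v^1_x\|_{L^2(0,L)}=O(1)$ coming from $\|U\|_{\mathcal H_j}=1$; paired with the viscous strain $D_1(v^2_x+v^4+\ell v^6)=o(1)/\lambda$ this produces the term that fixes the paper's rate at $o(1)/\lambda$. You instead rewrite $v^1_x=(v^1_x+v^3+\ell v^5)-v^3-\ell v^5$, which on $\mathrm{supp}\,\eta\subset(\alpha,\beta)$ gives $\|v^1_x\|_{L^2(\alpha,\beta)}=O(1/\lambda)$ by \eqref{Bresse equation 2.13} and \eqref{Bresse equation 2.9}, and hence every term is indeed $o(1)/\lambda^2$ as you claim. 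The same trick applies to $v^5_x=(v^5_x-\ell v^1)+\ell v^1$. So your stronger conclusion $\int_{\alpha+\epsilon}^{\beta-\epsilon}|\lambda v^i|^2=o(1)/\lambda^2$ is correct; the paper settles for $o(1)/\lambda$ only because it does not exploit this local gradient bound. The improvement is harmless for the sequel (Lemma~\ref{lemma6.6} only needs $o(1)$), but it is a legitimate sharpening.
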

\begin{proof}
	First, multiplying equation \eqref{Bresse equation 2.3} by $i \lambda \eta \overline{v^1}$ in $L^2(0,L)$ and integrating by parts, we get:
	\begin{equation}\label{Bresse equation 2.15}
-\int_0^L \eta \left|\lambda v^1\right|^2dx-i \int_0^L\lambda \eta v^2 \overline{v^1}dx= i \int_0^L \frac{f^1}{\lambda^2}\eta \lambda\overline{v^1}dx.
	\end{equation}
	As $\lambda v^1$ is uniformly bounded in $L^2(0,L)$ and $f^1$ converges to zero in $H_0^1(0,L)$, we get
	that the term on the right hand side of \eqref{Bresse equation 2.15} converges to zero and consequently
	\begin{equation}\label{Bresse equation 2.16}
-\int_0^L \eta \left|\lambda v^1\right|^2dx-i \int_0^L\lambda \eta v^2 \overline{v^1}dx = \frac{o(1)}{\lambda^2}.
	\end{equation}
	Moreover, multiplying \eqref{Bresse equation 2.4} by $\rho_1^{-1} \eta \overline {v^1}$ in $L^2(0,L)$, then integrating by parts we obtain:
	\begin{eqnarray}\label{Bresse equation 2.17}
i \int_0^L\lambda \eta v^2 \overline{v^1}dx+  \rho_1^{-1}\int_0^L \left(k_1\left(v^1_x+v^3+\ell  v^5\right) +D_1 \left( v^2_x+v^4+\ell v^6\right)\right)\left(\eta \overline {v^1}\right)_xdx\nonumber \\
- \ell  k_3 \rho_1^{-1}\int_0^L\left(v^5_x-\ell  v^1\right)\eta \overline {v^1}dx - \ell \rho_1^{-1}\int_0^L D_3 \left(v^6_x - \ell  v^2\right)\eta \overline {v^1}dx=\int_0^L\frac{f^2}{\lambda^2}\eta \overline {v^1}dx.
	\end{eqnarray} 
	Using \eqref{Bresse equation 2.10}, \eqref{Bresse equation 2.13}, the fact that $f^2$ converges to zero in $L^2(0,L)$ and $\lambda v^1$, $v^1_x$
are uniformly bounded in $L^2(0,L)$ in \eqref{Bresse equation 2.17}, we get:
	\begin{equation}\label{Bresse equation 2.18}
i \int_0^L\lambda \eta v^2 \overline{v}^1dx=\frac{o\left(1\right)}{\lambda}.
	\end{equation}
	Finally, using \eqref{Bresse equation 2.18} in \eqref{Bresse equation 2.16} and the definiton of $\eta$, we get: 
	\begin{equation*}
\int_{0}^{L}\eta\left|\lambda v^1\right|^2dx=\frac{o\left(1\right)}{\lambda}, \quad
\int_{\alpha+\epsilon}^{\beta-\epsilon}\left|\lambda v^1\right|^2dx=\frac{o\left(1\right)}{\lambda}.
	\end{equation*}
	In a same way, we show:
	\begin{equation*}
\int_{\alpha+\epsilon}^{\beta-\epsilon}\left|\lambda v^3\right|^2dx=\frac{o\left(1\right)}{\lambda}, \ \
\int_{\alpha+\epsilon}^{\beta-\epsilon}\left|\lambda v^5\right|^2dx=\frac{o\left(1\right)}{\lambda}.
	\end{equation*}
	The proof is thus complete.
\end{proof}
Now, we introduce new multiplier functions. For this purpose, let 
$\emptyset\not=\omega_\epsilon=(\alpha+\epsilon,\beta-\epsilon).$
\begin{lem}
	The solution $(u,y,z)$ of the following system
	\begin{equation}\label{Bresse Aux System}
\left\{
\begin{matrix}
\rho_1 \lambda^2 u + k_1 \left(u_x+y+\ell z\right)_x + \ell k_3\left(z_x-\ell u\right) -i \lambda  \mathds{1}_{\omega_\epsilon} u&= &v^1,\\
\rho_2 \lambda^2 y + k_2y_{xx}-k_1\left(u_x+y+\ell z\right)- i \lambda \mathds{1}_{\omega_\epsilon} y& =&v^3, \\	
\rho_1 \lambda^2 z + k_3 \left(z_x-\ell u\right)_x - \ell k_1\left(u_x+y+\ell z\right) - i \lambda \mathds{1}_{\omega_\epsilon} z&= &v^5
\end{matrix} 
\right.
	\end{equation}
	with fully Dirichlet boundary conditions: 
	\begin{equation}\label{FD}
u\left(0\right)=u\left(L\right)=y\left(0\right)=y\left(L\right)=
z\left(0\right)=z\left(L\right)=0
	\end{equation}
	or with Dirichlet-Neumann-Neumann boundary conditions:
	\begin{equation}\label{DN}
u\left(0\right)=u\left(L\right)=y_x\left(0\right)=y_x\left(L\right)=
z_x\left(0\right)=z_x\left(L\right)=0
	\end{equation}
	verifies the following inequality: 
	\begin{align}\label{Bresse Aux Inequality}
\int_0^L \bigg(& \rho_1|\lambda u|^2 + \rho_2|\lambda y|^2 + \rho_1|\lambda z|^2 +k_2 |y_x|^2\nonumber \\
&\ \ +k_1|u_x+y +\ell z|^2 +k_3|z_x-\ell u|^2\bigg)dx {\ \leq \ } C \int_0^L \left(|v^1|^2+|v^3|^2 +|v^5|^2\right)dx,
	\end{align}
	where $C$ is a constant independent of $n$.
\end{lem}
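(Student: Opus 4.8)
The plan is to prove \eqref{Bresse Aux Inequality} by the frequency-domain energy method, extracting from the localized term $-i\lambda\mathds{1}_{\omega_\epsilon}$ a dissipation estimate and then propagating it to all of $(0,L)$ by a multiplier argument. To lighten the notation, set
\[
K=\lambda^2\big(\rho_1\|u\|^2+\rho_2\|y\|^2+\rho_1\|z\|^2\big),\quad P=k_1\|u_x+y+\ell z\|^2+k_2\|y_x\|^2+k_3\|z_x-\ell u\|^2,
\]
and $\mathcal D=\int_{\omega_\epsilon}(|u|^2+|y|^2+|z|^2)\,dx$; the goal is then $K+P\leq C\int_0^L(|v^1|^2+|v^3|^2+|v^5|^2)\,dx$. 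First I would multiply the three equations of \eqref{Bresse Aux System} by $\bar u$, $\bar y$ and $\bar z$ respectively, integrate over $(0,L)$ and integrate by parts. Under either \eqref{FD} or \eqref{DN} all boundary terms vanish (for \eqref{DN} one combines $u(0)=u(L)=0$ with $y_x=z_x=0$ at the endpoints), and the coupling contributions recombine algebraically into $-k_1\|u_x+y+\ell z\|^2$ and $-k_3\|z_x-\ell u\|^2$. Summing the three identities yields
\[
K-P-i\lambda\,\mathcal D=\int_0^L\big(v^1\bar u+v^3\bar y+v^5\bar z\big)\,dx.
\]

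Taking the imaginary part annihilates the real quantities $K$ and $P$ and leaves the dissipation identity $\lambda\,\mathcal D=-\,\mathrm{Im}\int_0^L(v^1\bar u+v^3\bar y+v^5\bar z)\,dx$, so that the solution is controlled on the damping set $\omega_\epsilon$ by the product of the data and the solution; taking the real part gives $K-P=\mathrm{Re}\int_0^L(v^1\bar u+v^3\bar y+v^5\bar z)\,dx$, which shows that $K$ and $P$ differ only by a data term. Since $\|(u,y,z)\|_{L^2}$ is bounded by $\sqrt{K}/\lambda$ up to the constants $\rho_1,\rho_2$, both right-hand sides are bounded by a constant times $\|(v^1,v^3,v^5)\|_{L^2}\,\sqrt{K}/\lambda$.

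The core of the argument is then to convert the localized control of $\mathcal D$ into control of the full energy $K+P$. For this I would use a multiplier of the form $q(x)\partial_x$ applied componentwise, with a piecewise-$C^1$ weight $q$ built from cut-off functions of the type $\eta$ introduced above (so that $q$ sweeps $(0,L)$ toward $\omega_\epsilon$), and exploit the resulting flux identities together with the boundary conditions to bound $K+P$ by $\lambda^2\mathcal D$ plus lower-order terms plus a constant times the squared data norm. Feeding in the dissipation identity, the observed energy $\lambda^2\mathcal D$ is itself bounded by a constant times $\|(v^1,v^3,v^5)\|_{L^2}\,\sqrt{K}$, which by Young's inequality is absorbed into $\tfrac12 K$ at the cost of a term $C\|(v^1,v^3,v^5)\|_{L^2}^2$; this closes the estimate.

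The step I expect to be the main obstacle is precisely this propagation together with the absorption of the lower-order terms, made delicate by the coupling of the three components through $u_x+y+\ell z$ and $z_x-\ell u$ (mixing the transverse, shear and longitudinal motions). The lower-order terms are of size $O(\lambda^{-2})$ relative to $K+P$ and are therefore absorbed for $|\lambda|$ large; on the remaining bounded range of $\lambda$, where this smallness fails, I would instead invoke $i\mathbb{R}\subset\rho(\mathcal A_j)$ from Lemmas \ref{Bresse Lemma 2.6} and \ref{Bresse Lemma 2.7} together with the continuity of the resolvent (equivalently, a compactness–uniqueness contradiction argument) to obtain a uniform constant $C$. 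The structural reason the whole scheme succeeds without any equal-wave-speed hypothesis is that $\mathds{1}_{\omega_\epsilon}$ damps all three components simultaneously on the common interval $\omega_\epsilon$, so that no indirect transfer of damping between components is needed.
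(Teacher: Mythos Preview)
Your plan is sound, but it takes a genuinely different route from the paper's proof. The paper does not prove \eqref{Bresse Aux Inequality} from scratch; instead it recognizes that \eqref{Bresse Aux System} is precisely the resolvent equation $(i\lambda I-\mathcal A_{\mathrm{aux}_j})(u,\tilde u,y,\tilde y,z,\tilde z)=(0,-\rho_1^{-1}v^1,0,-\rho_2^{-1}v^3,0,-\rho_1^{-1}v^5)$ for the Bresse system with three local \emph{frictional} dampings $\mathds 1_{\omega_\epsilon}u_t,\ \mathds 1_{\omega_\epsilon}y_t,\ \mathds 1_{\omega_\epsilon}z_t$, a system already known to be exponentially stable by \cite{wehbey}. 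Invoking the Huang--Pr\"uss characterization, the resolvent is then uniformly bounded on $i\mathbb R$, and \eqref{Bresse Aux Inequality} follows in one line. Your approach --- energy identity, localized dissipation, then propagation by a $q(x)\partial_x$ multiplier, with a compactness--uniqueness argument for bounded $\lambda$ --- is exactly how one would \emph{prove} the exponential stability result of \cite{wehbey} in the first place, so you are effectively reproving the cited lemma rather than using it. This buys self-containment, but at the cost of carrying out the full multiplier computation for the coupled system (the step you flag as the main obstacle), whereas the paper sidesteps all of it by a black-box appeal to the literature.
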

\begin{proof}
	We consider the following Bresse system subject to three local viscous dampings:
	\begin{equation}\label{Bresse auxsys}
\left\{
\begin{matrix}
\rho_1 u_{tt} -k_1\left(u_x+y+\ell z\right)_x -lk_3\left(z_x-\ell u\right) + \mathds{1}_{\omega_\epsilon}u_t&=&0,\\ \\
\rho_2 y_{tt} - k_2 y_{xx} + k_1\left(u_x +y +\ell z\right)+\mathds{1}_{\omega_\epsilon}y_t&=&0,\\ \\
\rho_1z_{tt} -k_3\left(z_x-\ell u\right)_x+ \ell k_1\left(u_x+y+\ell z\right) +\mathds{1}_{\omega_\epsilon}z_t&=&0
\end{matrix}
\right.
	\end{equation}
	with fully Dirichlet or Dirichlet-Neumann-Neumann boundary conditions. 
	Systems \eqref{Bresse auxsys}-\eqref{FD} and \eqref{Bresse auxsys}-\eqref{DN} are well posed in the space $H_1= \left(H_0^1(0,L)\times L^2(0,L)\right)^3$
and in the space $H_2= \left(H_0^1(0,L)\times L^2(0,L)\right)\times  \left(H_*^1(0,L)\times L_*^2(0,L)\right)^2$ respectively.
In addition, both are exponentially stable (see \cite{wehbey}). Therefore, following Huang \cite{Huang-85} and Pruss \cite{Pruss-84}, we deduce that the resolvent of the associated operator:
	$$\AA_{aux_j} : D(\AA_{aux_j})\subset H_j\rightarrow H_j$$
	defined by 
	$$D\left(\mathcal{A}_{aux_1}\right)=\big(H_0^1(\Omega)\cap H^2(\Omega)\big)^3\times\big(H_0^1(\Omega)\big)^3,$$
	$$D\left(\mathcal{A}_{aux_2}\right)=\big\{U \in H_2: u\in H_0^1\cap H^2, y,z\in H^1_\star\cap H^2, \tilde u,y_x, z_x\in
	H^1_0,\tilde y,\tilde z\in H^1_\star\big\}$$
	and
	\begin{equation*}
\mathcal{A}_{aux_j}\left(\begin{array}{l}
u\\  \noalign{\medskip} \tilde u\\  \noalign{\medskip}y\\ \tilde y\\  \noalign{\medskip} z\\  \noalign{\medskip} \tilde z
\end{array}\right)=\left(\begin{array}{c}
\tilde u\\  \noalign{\medskip} \rho_1^{-1}\left[k_1(u_x+y+\ell z)_x    +\ell k_3(z_x-\ell u)-\mathds{1}_{\omega_\epsilon}\tilde u\right]\\  \noalign{\medskip} \tilde y\\  \noalign{\medskip} 
\rho_2^{-1} \left[k_2y_{xx}-k_1\left(u_x+y+\ell  z\right)-\mathds{1}_{\omega_\epsilon}\tilde y\right]\\  \noalign{\medskip}
\tilde z\\  \noalign{\medskip}
\rho_1^{-1}\left[k_3(z_x-\ell  u)_x-\ell  k_1\left(u_x+z+\ell  z\right)-\mathds{1}_{\omega_\epsilon}\tilde z\right] 
\end{array}\right)
	\end{equation*}
	is uniformly bounded on the imaginary axis. So, by setting $ \tilde u=i \lambda u $, $ \tilde y=i \lambda y $ and $ \tilde z=i \lambda z $, we deduce that: 
	\begin{equation*}
\left(\begin{array}{l}
u\\  \noalign{\medskip} \tilde u\\  \noalign{\medskip}y\\ \tilde y\\  \noalign{\medskip} z\\  \noalign{\medskip} \tilde z
\end{array}\right)=	\left(i\lambda-\mathcal{A}_{aux_j}\right)^{-1}\left(\begin{array}{c}
0\\  \noalign{\medskip} \frac{-1}{\rho_1}v^1\\  \noalign{\medskip}0\\ \frac{-1}{\rho_2}v^3\\  \noalign{\medskip} 0\\  \noalign{\medskip} \frac{-1}{\rho_1}v^1
\end{array}\right).
	\end{equation*}
	This yields:
	\begin{align}
\left\|\left(u, \tilde u, y, \tilde y, z, \tilde z\right)\right\|^2_{H_j} &\leq \norm{	\left(i\lambda-\mathcal{A}_{aux_j}\right)^{-1}}_{\mathcal{L}(H_j)} \norm{(0, \frac{-1}{\rho_1}v^1, 0, \frac{-1}{\rho_2}v^3, 0,\frac{-1}{\rho_1}v^5)}_{H_j}\nonumber\\
&\leq C \int_0^L \left(|v^1|^2+|v^3|^2 +|v^5|^2\right)dx,
	\end{align}
	where $C$ is a constant independent of $n$. 
	Consequently, \eqref{Bresse Aux Inequality} holds. The proof is thus complete. 
\end{proof}
\begin{lem}\label{lemma6.6}
	Under all the above assumptions, we have:
	\begin{equation}\label{Bresse equation 2.32}
\int_0^L|\lambda v^1|^2dx=o(1), \ \ 
\int_0^L|\lambda v^3|^2dx=o(1), \ \
\int_0^L|\lambda v^5|^2dx=o(1).
	\end{equation}
\end{lem}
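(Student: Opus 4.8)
The plan is to promote the interior estimate \eqref{Bresse equation 2.14}, which only controls $\lambda v^1,\lambda v^3,\lambda v^5$ on $\omega_\epsilon=(\alpha+\epsilon,\beta-\epsilon)$ with rate $\frac{o(1)}{\lambda}$, to the global estimate \eqref{Bresse equation 2.32}, by using the auxiliary system \eqref{Bresse Aux System} as a multiplier. The decisive point is that, after eliminating $v^2,v^4,v^6$ from \eqref{Bresse equation 2.4}, \eqref{Bresse equation 2.6}, \eqref{Bresse equation 2.8} via \eqref{Bresse equation 2.3}, \eqref{Bresse equation 2.5}, \eqref{Bresse equation 2.7}, the original equations take the form
\[
\rho_1\lambda^2 v^1 + k_1(v^1_x+v^3+\ell v^5)_x + \ell k_3(v^5_x-\ell v^1)=R^1,\qquad\text{and two analogues for }v^3,v^5,
\]
where $R^1,R^3,R^5$ collect \emph{only} the Kelvin--Voigt terms $[D_i(\cdots)]_x$, $D_i(\cdots)$ and the source terms $\frac{f^j}{\lambda^2},\frac{f^j}{\lambda}$. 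Thus the inertial-plus-elastic operator acting on $V:=(v^1,v^3,v^5)$ is \emph{identical} to the one in \eqref{Bresse Aux System} acting on $W:=(u,y,z)$, the only difference being the localized viscous term $-i\lambda\mathds{1}_{\omega_\epsilon}$ and the fact that \eqref{Bresse Aux System} carries exactly $V$ as its source.

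First I would take the $(L^2(0,L))^3$ inner product of the three equations of \eqref{Bresse Aux System} with $\lambda^2\overline{v^1},\lambda^2\overline{v^3},\lambda^2\overline{v^5}$, sum, and integrate by parts. Writing $\langle\cdot,\cdot\rangle$ for the $(L^2(0,L))^3$ pairing and $B(\cdot,\cdot)$ for the Hermitian sesquilinear form generated by the common inertial-elastic operator, the boundary conditions \eqref{FD}/\eqref{DN} shared by $W$ annihilate every boundary contribution, and pairing the rewritten original system with $W$ gives $B(V,W)=\langle R,W\rangle$ with $R=(R^1,R^3,R^5)$. Using $B(W,V)=\overline{B(V,W)}$, this yields the identity
\[
\int_0^L\bigl(|\lambda v^1|^2+|\lambda v^3|^2+|\lambda v^5|^2\bigr)dx=\lambda^2\,\overline{\langle R,W\rangle}-i\lambda^3\int_{\omega_\epsilon}\bigl(u\overline{v^1}+y\overline{v^3}+z\overline{v^5}\bigr)dx.
\]
It then remains to show that both terms on the right are $o(1)$.

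For the term $\lambda^2\overline{\langle R,W\rangle}$, the mere $L^\infty$-regularity of the $D_i$ forbids differentiating the coefficients, so I would integrate by parts to move the $\partial_x$ in each $[D_i(\cdots)]_x$ onto the auxiliary component (which lies in $H^1_0$, so no boundary term survives), producing pairings such as $\lambda^2\int_0^L D_1^{1/2}u_x\,\overline{D_1^{1/2}(v^2_x+v^4+\ell v^6)}\,dx$. The energy inequality \eqref{Bresse Aux Inequality} together with $\norm{v^i}=O(\frac1\lambda)$ from \eqref{Bresse equation 2.9} and the coercivity \eqref{Bresse Equation 2.6} of Remark \ref{Remark 2.2} give $\norm{\lambda u},\norm{u_x},\norm{y_x},\norm{z_x}=O(\frac1\lambda)$ and $\norm{u},\norm{y},\norm{z}=O(\frac1{\lambda^2})$, while \eqref{Bresse equation 2.10} supplies the weighted factors of size $\frac{o(1)}{\lambda}$; by Cauchy--Schwarz each such product is $\lambda^2\cdot O(\frac1\lambda)\cdot\frac{o(1)}{\lambda}=o(1)$. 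The undifferentiated damping terms ($\ell D_3(v^6_x-\ell v^2)$ and the $D_1(\cdots)$ appearing in $R^3$) and the source terms ($\frac{f^j}{\lambda^2},\frac{f^j}{\lambda}$ tested against $u,y,z=O(\frac1{\lambda^2})$ with $f^j\to0$) are estimated the same way and are even smaller. For the interior term, I would use $\norm{v^i}_{L^2(\omega_\epsilon)}=\frac{o(1)}{\lambda^{3/2}}$ from \eqref{Bresse equation 2.14} and $\norm{u},\norm{y},\norm{z}=O(\frac1{\lambda^2})$ from \eqref{Bresse Aux Inequality}, so $\lambda^3\int_{\omega_\epsilon}u\overline{v^1}\leq\lambda^3\cdot O(\frac1{\lambda^2})\cdot\frac{o(1)}{\lambda^{3/2}}=o(1)$. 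Summing proves \eqref{Bresse equation 2.32}.

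The main obstacle is conceptual rather than computational: one must resist estimating $\lambda v^i$ head-on and instead feed $V=(v^1,v^3,v^5)$ as the source of the exponentially stable auxiliary Bresse system \eqref{Bresse Aux System}, then use its solution as the multiplier, so that the elastic energy cancels through the symmetry of $B$ and only the already-controlled damping/source terms and the interior term remain. The secondary technical constraint, imposed by the non-smooth coefficients, is that every integration by parts must shift derivatives onto the smooth auxiliary fields $u,y,z\in H^1_0$ and never onto the $D_i$.
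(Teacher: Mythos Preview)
Your proposal is correct and follows essentially the same route as the paper: both use the solution $(u,y,z)$ of the auxiliary system \eqref{Bresse Aux System} as a multiplier, exploit the Hermitian symmetry of the inertial--elastic bilinear form to reduce matters to the identity you display (which is the abstract version of the paper's \eqref{Bresse equation 2.26}), and then estimate the interior term via \eqref{Bresse equation 2.14} together with $\norm{\lambda^2 u}=O(1)$, and the damping/source terms via \eqref{Bresse equation 2.10} together with $\norm{\lambda u_x}=O(1)$. The only difference is presentational: the paper carries out Steps~1--4 explicitly equation by equation, whereas you package the same computation through the symmetric form $B(\cdot,\cdot)$; your care to shift every $\partial_x$ from the $D_i$-terms onto the smooth auxiliary fields is exactly what the paper does implicitly.
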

\begin{proof}
	For clarity of the proof, we divide the proof into several steps.\\
	\textbf{Step 1.} First, multiplying \eqref{Bresse equation 2.3} by $i \rho_1 \lambda \overline u$, where $u$ is a solution of system \eqref{Bresse Aux System}, we get: 
	\begin{equation}\label{Bresse equation 2.19}
-\int_0^L \rho_1\lambda^2 \overline u v^1dx - i \int_0^L\rho_1 \lambda \overline u v^2 dx=\rho_1 \int_0^L\frac{ if^1}{\lambda}\overline udx.
	\end{equation}
	Moreover, multiplying \eqref{Bresse equation 2.4}  by $\overline u$ and integrating by parts, we obtain:
	\begin{align}\label{Bresse equation 2.20}
& i \int_0^L \rho_1 \lambda \overline u v^2 dx - \int_0^Lk_1 \overline u_{xx}v^1dx- \int_0^L \ell k_3	(-\ell \overline u)v^1dx + \int_0^Lk_1\overline u_xv^3dx+ \int_0^L \ell k_1\overline u_x v^5dx\nonumber\\
& +\int_0^L \ell  k_3 \overline u_x v^5dx + \int_0^LD_1(v^2_x+v^4 +\ell v^6)\overline u_xdx -\int_0^L\ell D_3(v^6_x-\ell v^2)\overline u dx=\rho_1 \int_0^L \frac{f^2}{\lambda^2}\overline udx.
	\end{align}	
	Now, combining \eqref{Bresse equation 2.19} and \eqref{Bresse equation 2.20}, we get:
	\begin{align}\label{Bresse equation 2.21}
&\int_0^L\left[\rho_1 \lambda^2 \overline u +k_1 \overline u_{xx}+\ell k_3(-\ell \overline u)\right]v^1dx - \int_0^Lk_1\overline u_xv^3dx- \int_0^L \ell k_1\overline u_x v^5dx -\int_0^L \ell  k_3 \overline u_x v^5dx \nonumber\\
& - \int_0^LD_1(v^2_x+v^4 +\ell v^6)\overline u_xdx +\int_0^L\ell D_3(v^6_x-\ell v^2)\overline u dx=-\rho_1 \int_0^L\left(\frac{i f^1}{\lambda} +\frac{f^2}{\lambda^2}\right)\overline udx.
	\end{align}	
	\textbf{Step 2.} Similarly to Step 1, multiplying \eqref{Bresse equation 2.5} by $i\rho_2 \lambda \overline y$ and \eqref{Bresse equation 2.6}
by $\overline y$, where $y$ is a solution of system \eqref{Bresse Aux System}, we get: 	
	\begin{align}\label{Bresse equation 2.22}
&\int_0^L\left[\rho_2 \lambda^2 \overline y+k_2 \overline y_{xx} -k_1\overline y\right]v^3dx+ \int_0^Lk_1\overline y_x v^1dx -\int_0^L\ell k_1\overline y v^5dx\nonumber\\ &-\int_0^LD_2v^4_x \overline y_xdx-\int_0^LD_1(v^2_x+v^4+\ell v^6)\overline ydx=-\rho_2\int_0^L\left(\frac{i f^3}{\lambda}+\frac{f^4}{\lambda^2}\right)\overline ydx.	
	\end{align}
	\textbf{Step 3.} As in Step 1 and Step 2, by multiplying  \eqref{Bresse equation 2.7} by $i\rho_1 \lambda \overline z$ and \eqref{Bresse equation 2.8}
by $\overline z$, where $z$ is a solution of system \eqref{Bresse Aux System}, we get:
	\begin{align}\label{Bresse equation 2.23}
&\int_0^L\left[\rho_1 \lambda^2 \overline z +k_3 \overline z_{xx} - \ell  k_1\left(\ell \overline z\right)\right]v^5dx +\int_0^L \ell k_3\overline z_x v^1dx +\int_0^L\ell k_1 \overline z_x v^1dx \nonumber\\
&-\int_0^L \ell k_1\overline z v^3dx -\int_0^LD_3\left(v^6_x-\ell v^2\right) \overline z_xdx
-\ell \int_0^LD_1(v^2_x+v^4+\ell v^6)\overline z dx = - \rho_1\int_0^L\left( \frac{if^5}{\lambda} +\frac{f^6}{\lambda^2}\right)\overline zdx.
	\end{align}
	\textbf{Step 4.} First, combining \eqref{Bresse equation 2.21}, \eqref{Bresse equation 2.22} and \eqref{Bresse equation 2.23}, we obtain:
	\begin{align}\label{Bresse equation 2.24}
&\int_0^L\left[\rho_1 \lambda^2 \overline u +k_1 \left(\overline u_{x}+\overline y + \ell \overline z\right)_x+\ell k_3(\overline z_x-\ell \overline u)\right]v^1dx
+\int_0^L\left[\rho_2 \lambda^2 \overline y+k_2 \overline y_{xx} -k_1\left(\overline u_x+\overline y + \ell \overline z \right)\right]v^3dx	\nonumber\\
&+\int_0^L\left[\rho_1 \lambda^2 \overline z +k_3\left( \overline z_{x} -\ell \overline u\right)_x - \ell  k_1\left(\overline u_x +\overline y+ \ell \overline z\right)\right]v^5dx - \int_0^LD_1(v^2_x+v^4 +\ell v^6)\overline u_xdx\nonumber \\ &+\int_0^L\ell D_3(v^6_x-\ell v^2)\overline u dx	-\int_0^LD_2v^4_x \overline y_xdx-\int_0^LD_1(v^2_x+v^4+\ell v^6)\overline ydx -\int_0^LD_3\left(v^6_x-\ell v^2\right) \overline z_xdx\\
&-\ell \int_0^LD_1(v^2_x+v^4+\ell v^6)\overline z dx = -\rho_1 \int_0^L\left(\frac{i f^1}{\lambda} +\frac{f^2}{\lambda^2}\right)\overline udx -\rho_2\int_0^L\left(\frac{i f^3}{\lambda}+\frac{f^4}{\lambda^2}\right)\overline ydx \nonumber\\
&- \rho_1\int_0^L\left( \frac{if^5}{\lambda} +\frac{f^6}{\lambda^2}\right)\overline zdx.\nonumber
	\end{align}
	Combining equation \eqref{Bresse Aux System} and \eqref{Bresse equation 2.24}, multiplying by $\lambda^2$, we get:   

	\begin{align}\label{Bresse equation 2.26}
&\int_0^L|\lambda v^1|^2dx + \int_0^L|\lambda v^3|^2dx + \int_0^L|\lambda v^5|^2dx = i\int_{\alpha +\epsilon}^{\beta-\epsilon}(\lambda^2 \overline u\lambda v^1 dx  + \lambda^2 \overline y\lambda v^3+ \lambda^2 \overline z \lambda v^5) dx  \nonumber \\
& +\int_0^L \lambda D_1(v^2_x+v^4 +\ell v^6)\lambda \overline u_xdx -\int_0^L\ell D_3(v^6_x-\ell v^2)\lambda^2\overline u dx	+\int_0^L\lambda D_2v^4_x \lambda \overline y_xdx \nonumber\\
&+\int_0^L D_1(v^2_x+v^4+\ell v^6) \lambda^2\overline ydx +\int_0^L\lambda D_3\left(v^6_x-\ell v^2\right) \lambda \overline z_xdx
+\ell \int_0^LD_1(v^2_x+v^4+\ell v^6)\lambda^2\overline z dx \\
&-\rho_1 \int_0^L\left(i f^1 \lambda + f^2 \right)\overline u dx -\rho_2 \int_0^L\left(i f^3 \lambda + f^4 \right)\overline y dx -\rho_1 \int_0^L\left(i f^5 \lambda + f^6 \right)\overline z dx. \nonumber 
	\end{align}
	Using  {estimates} \eqref{Bresse equation 2.14} 
	and the fact that $\lambda^2 u$, $\lambda^2 y$ and $\lambda^2 z$ are uniformly bounded in $L^2(0,L)$ due to \eqref{Bresse Aux Inequality}, we get:
	\begin{equation}\label{Bresse equation 2.28}
i\int_{\alpha +\epsilon}^{\beta-\epsilon}(\lambda^2 \overline u\lambda v^1 dx  + \lambda^2 \overline y\lambda v^3+ \lambda^2 \overline z \lambda v^5) dx = \frac{o(1)}{\lambda^{1/2}}.  
	\end{equation}
	In addition, using 
	\eqref{Bresse equation 2.10} and the fact that $\lambda u_x$, $\lambda y_x$ and $\lambda z_x$ are uniformly bounded in $L^2(0,L)$ due to \eqref{Bresse Aux Inequality}. we get: 
	\begin{equation}\label{Bresse equation 2.29}
\int_0^L \lambda D_1(v^2_x+v^4 +\ell v^6)\lambda \overline u_xdx+\int_0^L\lambda D_2v^4_x \lambda \overline y_xdx \int_0^L\lambda D_3\left(v^6_x-\ell v^2\right) \lambda \overline z_xdx = o(1).
	\end{equation}
	Also, by using 
	\eqref{Bresse equation 2.10} and the fact that $\lambda^2 u$, $\lambda^2 y$ and $\lambda^2 z$ are uniformly bounded in $L^2(0,L)$ due to \eqref{Bresse Aux Inequality}, we obtain: 
	\begin{equation}\label{Bresse equation 2.30}
\int_0^L\ell D_3(v^6_x-\ell v^2)\lambda^2\overline u dx+\int_0^L D_1(v^2_x+v^4+\ell v^6) \lambda^2\overline ydx 
+\ell \int_0^LD_1(v^2_x+v^4+\ell v^6)\lambda^2\overline z dx =\frac{o(1)}{\lambda}.
	\end{equation}
	Moreover, we have:
	\begin{equation}\label{Bresse equation 2.31}
-\rho_1 \int_0^L\left(i f^1 \lambda + f^2 \right)\overline u dx -\rho_2 \int_0^L\left(i f^3 \lambda + f^4 \right)\overline y dx -\rho_1 \int_0^L\left(i f^5 \lambda + f^6 \right)\overline z dx=o(1),
	\end{equation}
	since $f^1$, $f^3$, $f^5$ converge to zero in $H_0^1(0,L)$ (or in $H_\star^1(0,L)$), $f^2$, $f^4$, $f^6$ converge to zero in $L^2(0,L)$, and  $\lambda^2 u$, $\lambda^2 y$, $\lambda^2 z$ are uniformly bounded in $L^2(0,L)$.\\
	Finally, inserting \eqref{Bresse equation 2.28} -
	\eqref{Bresse equation 2.31} into \eqref{Bresse equation 2.26}, we get the desired  {estimates}  in \eqref{Bresse equation 2.32}. Thus the proof is complete.	
\end{proof}	
\begin{lem}\label{lemma6.7}
	Under all the above assumptions, we have:
	\begin{equation}\label{Bresse equation 2.33}
\int_0^L| v^1_x|^2dx=o(1), \ \ 
\int_0^L| v^3_x|^2dx=o(1), \ \
\int_0^L| v^5_x|^2dx=o(1).
	\end{equation}
\end{lem}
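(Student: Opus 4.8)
The plan is to close the contradiction argument by a potential-energy multiplier. First I would record an immediate consequence of Lemma \ref{lemma6.6}: combining $\int_0^L|\lambda v^1|^2\,dx=o(1)$, $\int_0^L|\lambda v^3|^2\,dx=o(1)$, $\int_0^L|\lambda v^5|^2\,dx=o(1)$ with the first-order relations \eqref{Bresse equation 2.3}, \eqref{Bresse equation 2.5}, \eqref{Bresse equation 2.7} (which give $v^2=i\lambda v^1-\lambda^{-2}f^1$, and similarly for $v^4,v^6$), and using $f^1,f^3,f^5\to0$, yields $\norm{v^2}=o(1)$, $\norm{v^4}=o(1)$, $\norm{v^6}=o(1)$. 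Thus the kinetic part of $\norm{U}^2_{\mathcal H_j}$ is already $o(1)$, and it remains only to control the potential part, from which the three claimed estimates will follow.

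To reach the potential part I would multiply \eqref{Bresse equation 2.4} by $\overline{v^1}$, \eqref{Bresse equation 2.6} by $\overline{v^3}$, and \eqref{Bresse equation 2.8} by $\overline{v^5}$ in $L^2(0,L)$, integrate by parts, and add the three identities. Writing $P:=v^1_x+v^3+\ell v^5$ and $S:=v^5_x-\ell v^1$, the boundary terms vanish: $v^1$ satisfies the Dirichlet condition in both \eqref{DDDD} and \eqref{DNND}, while for $v^3,v^5$ either the Dirichlet condition or the vanishing of the flux at $0,L$ encoded in $D(\mathcal A_2)$ suffices. The crucial algebraic point is that the coupling contributions telescope exactly: the three terms carrying $k_1P$ recombine as $k_1\int_0^L P\,(\overline{v^1_x}+\overline{v^3}+\ell\overline{v^5})\,dx=k_1\norm{P}^2$, the two terms carrying $k_3S$ recombine as $k_3\int_0^L S\,(\overline{v^5_x}-\ell\overline{v^1})\,dx=k_3\norm{S}^2$, and the shear term gives $k_2\norm{v^3_x}^2$; hence the elastic contributions assemble \emph{precisely} into the potential energy $k_1\norm{P}^2+k_2\norm{v^3_x}^2+k_3\norm{S}^2$, with no residual cross terms.

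It then remains to show every other term in the summed identity is $o(1)$. The kinetic contributions $i\rho_1\lambda\int v^2\overline{v^1}+i\rho_2\lambda\int v^4\overline{v^3}+i\rho_1\lambda\int v^6\overline{v^5}$ reduce, again via \eqref{Bresse equation 2.3}, \eqref{Bresse equation 2.5}, \eqref{Bresse equation 2.7}, to $-(\rho_1\norm{v^2}^2+\rho_2\norm{v^4}^2+\rho_1\norm{v^6}^2)+o(1)$, which is $o(1)$ by the first paragraph. The damping terms, namely $\int D_1(v^2_x+v^4+\ell v^6)\overline{v^1_x}\,dx$, $\int D_2v^4_x\overline{v^3_x}\,dx$, $\int D_3(v^6_x-\ell v^2)\overline{v^5_x}\,dx$ and their lower-order companions weighted by $\overline{v^1},\overline{v^3},\overline{v^5}$, are handled by writing $D_i=D_i^{1/2}D_i^{1/2}$, applying Cauchy--Schwarz, and invoking the dissipation bounds $\norm{D_1^{1/2}(v^2_x+v^4+\ell v^6)}=o(1)/\lambda$, $\norm{D_2^{1/2}v^4_x}=o(1)/\lambda$, $\norm{D_3^{1/2}(v^6_x-\ell v^2)}=o(1)/\lambda$ of Lemma \ref{Bresse polydiss.1}, together with the uniform boundedness of $\norm{v^i_x}$ and the bounds $\norm{v^i}=O(1/\lambda)$ from \eqref{Bresse equation 2.9}. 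Finally the right-hand side terms in $\lambda^{-1}f^i$ and $\lambda^{-2}f^i$ are $o(1)$ since $f^i\to0$ and $\norm{v^1},\norm{v^3},\norm{v^5}=O(1/\lambda)$. Collecting everything gives $k_1\norm{P}^2+k_2\norm{v^3_x}^2+k_3\norm{S}^2=o(1)$.

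From this I would extract the stated estimates: $k_2\norm{v^3_x}^2=o(1)$ gives $\norm{v^3_x}=o(1)$ at once; since $\norm{v^3},\norm{v^5}=O(1/\lambda)=o(1)$, the bound $\norm{P}=o(1)$ yields $\norm{v^1_x}\le\norm{P}+\norm{v^3}+\ell\norm{v^5}=o(1)$; and since $\norm{v^1}=O(1/\lambda)$, the bound $\norm{S}=o(1)$ yields $\norm{v^5_x}\le\norm{S}+\ell\norm{v^1}=o(1)$. I expect the only genuinely delicate point to be the exact bookkeeping that makes the coupling terms telescope into the potential energy (once this cancellation is seen, the remaining terms are routine applications of the two preceding lemmas), together with verifying that the integration-by-parts boundary terms indeed vanish in the Dirichlet--Neumann--Neumann case \eqref{DNND}.
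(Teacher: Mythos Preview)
Your proof is correct and uses the same multipliers as the paper: multiply \eqref{Bresse equation 2.4}, \eqref{Bresse equation 2.6}, \eqref{Bresse equation 2.8} by $\overline{v^1},\overline{v^3},\overline{v^5}$, integrate by parts, and invoke Lemma~\ref{lemma6.6} together with the dissipation estimates of Lemma~\ref{Bresse polydiss.1}. The only organizational difference is that the paper treats the three equations one at a time---from \eqref{Bresse equation 2.4} alone it isolates $k_1\int_0^L|v^1_x|^2\,dx$ and estimates the cross terms $k_1\int(v^3+\ell v^5)\overline{v^1_x}\,dx$ and $\ell k_3\int(v^5_x-\ell v^1)\overline{v^1}\,dx$ individually using $\|v^3\|,\|v^5\|=O(1/\lambda)$---whereas you sum the three identities so that these cross terms telescope exactly into the potential energy $k_1\|P\|^2+k_2\|v^3_x\|^2+k_3\|S\|^2$. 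Your packaging is a bit cleaner conceptually (it exhibits the energy structure), while the paper's version avoids having to disentangle $\|v^1_x\|$ and $\|v^5_x\|$ from $\|P\|$ and $\|S\|$ at the end; both rest on the same estimates and neither requires any idea the other lacks.
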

\begin{proof}
	First, multiplying \eqref{Bresse equation 2.4} by $\overline {v^1}$ and then integrating by parts, we get:
	\begin{align}\label{Bresse equation 2.34}
&i\int_0^L\rho_1\lambda v^2 \overline{ v^1}dx +k_1\int_0^L| v^1_x|^2dx+k_1\int_0^L\left(v^3+\ell  v^5\right)\overline {v^1_x}dx +\int_0^LD_1  \left(v^2_x+v^4+\ell v^6\right)\overline{v_x^1}dx\nonumber \\
&-  \ell  k_3\int_0^L\left(v^5_x-\ell  v^1\right)\overline{v^1}dx - \ell \int_0^L D_3 \left(v^6_x - \ell  v^2\right)\overline{v^1}dx= \rho_1\int_0^L\frac{f^2}{\lambda^2}\overline{v^1}dx.
	\end{align}
	Then, using \eqref{Bresse equation 2.9}, \eqref{Bresse equation 2.10} and the fact that $v_x^1$, $\left(v^5_x-\ell  v^1\right)$ are uniformly bounded in $L^2(0,L)$ due to \eqref{Bresse equation 2.1}, we obtain: 
	\begin{align}\label{Bresse equation 2.35}
&k_1\int_0^L\left(v^3+\ell  v^5\right)\overline {v^1_x}dx +\int_0^LD_1  \left(v^2_x+v^4+\ell v^6\right)\overline{v_x^1}dx
\nonumber\\
&-  \ell  k_3\int_0^L\left(v^5_x-\ell  v^1\right)\overline{v^1}dx 
- \ell \int_0^L D_3 \left(v^6_x - \ell  v^2\right)\overline{v^1}dx=o(1).
	\end{align}
	As $f^2$ converges to zero in $L^2(0,L)$ and $\lambda v^1$ is uniformly bounded in $L^2(0,L)$, we have:
	\begin{equation}\label{Bresse equation 2.36}
\rho_1\int_0^L\frac{f^2}{\lambda^2}\overline{v^1}dx=o(1).
	\end{equation}
	Next, inserting \eqref{Bresse equation 2.35} and 
	\eqref{Bresse equation 2.36} into \eqref{Bresse equation 2.34}, we get: 
	\begin{align}\label{Bresse equation 2.37}
i\int_0^L\rho_1\lambda v^2 \overline{ v^1}dx +k_1\int_0^L| v^1_x|^2dx=o(1).
	\end{align}
	Using Lemma \ref{lemma6.6} and the fact that $ v^2$ is uniformly bounded in $L^2(0,L)$ due to \eqref{Bresse equation 2.37}, we deduce:
	\begin{equation*}
\int_0^L| v^1_x|^2dx=o(1).
	\end{equation*}
	Similarly, one can prove that:
	\begin{equation*}
\int_0^L| v^3_x|^2dx=o(1), \ \ \int_0^L| v^5_x|^2dx=o(1).
	\end{equation*}
	Thus, the proof is complete.
\end{proof}
\begin{proof}[ Proof of Theorem \ref{PolyThe.2}]Using Lemma \ref{lemma6.6} and  Lemma \ref{lemma6.7}, we get  that $\norm{U}_{\HH_j}=o(1)$.
Therefore, we get a contradiction with \eqref{Bresse equation 2.1} and consequently (H3) holds. Thus the proof is complete
\end{proof}
\begin{rk}
It is known that for a single one-dimensional wave equation with damping coefficient $D_{1} = d_{0} > 0 $ on $\omega$, the optimal solution decay rate is $1/t^2$.
The new multipliers (one for each equation) we have used here, defined by system \eqref{Bresse Aux System}, do not permit to obtain a decay rate of  $1/t^2$ but only $1/t$.
This may be due to the coupling effects and we do not know if this decay rate of $1/t$ is optimal.
\end{rk}
\section{The case of only one local viscoelastic damping with non smooth coefficient at the interface} \label{Polynomial2}
In control theory, it is important to reduce the number of {control such as damping terms}.
So, this section is devoted to show the polynomial stability of systems \eqref{eqq1.1'}-\eqref{DDDD} and \eqref{eqq1.1'}-\eqref{DNND}
subject to only one viscoelastic Kelvin-Voigt damping with non smooth coefficient at the interface. For this purpose, we consider the following condition: 
\begin{equation}\label{onelocal}
	D_1=D_3=0 \, \, \mathrm{in}\, \,(0,L) \quad \mathrm{and} \quad \quad \exists \, \, d_0>0\,\, \mbox{such that}\,\, D_2 \geq d_0>0 \, \, \mathrm{in} \, \, \emptyset \not=(\alpha,\beta) \subset (0,L).
\end{equation}
The main result of this section is given by the following theorem:
\begin{thm}\label{PolyThe1.2}
	Assume that condition \eqref{onelocal} is satisfied. Then, there exists a positive constant $c>0$ such that for all $U_0 \in D{(\AA_j)}$, $j=1,2,$
the energy of system \eqref{eqq1.1'} satisfies the following decay rate:
	\begin{equation}\label{polydecay2.2}
E(t)\leq  \frac{c}{\sqrt{t}}\norm{U_0}^2_{D{(\AA_j)}}.
	\end{equation}
\end{thm}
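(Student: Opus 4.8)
The plan is to invoke the Borichev--Tomilov characterization, Theorem \ref{type}, part \textit{ii)}. Since condition \eqref{onelocal} is a particular case of (SSC), the inclusion $i\mathbb{R}\subset\rho(\mathcal{A}_j)$ already follows from Lemmas \ref{Bresse Lemma 2.6} and \ref{Bresse Lemma 2.7}, so it suffices to establish the resolvent bound
\begin{equation*}
\sup_{\lambda\in\mathbb{R}}\frac{1}{|\lambda|^{4}}\left\|\left(i\lambda I-\mathcal{A}_j\right)^{-1}\right\|_{\mathcal{L}(\mathcal{H}_j)}=O(1),
\end{equation*}
which is the case $l=4$ and yields exactly $E(t)\leq c/\sqrt{t}$. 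As in the proof of Theorem \ref{PolyThe.2}, I would argue by contradiction: suppose there are $\lambda_n\in\mathbb{R}$ with $|\lambda_n|\to+\infty$ and $U_n=(v^1_n,\dots,v^6_n)^{\mathsf{T}}\in D(\mathcal{A}_j)$ with $\|U_n\|_{\mathcal{H}_j}=1$ and $\lambda_n^{4}\left(i\lambda_nU_n-\mathcal{A}_jU_n\right)=F_n\to 0$ in $\mathcal{H}_j$, and then derive $\|U_n\|_{\mathcal{H}_j}=o(1)$. Dropping the index $n$, the resolvent equation splits into six scalar equations which, since $D_1=D_3=0$, carry the single damping $D_2$ in the shear-angle equation and have right-hand sides of order $\lambda^{-4}$.

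First I would extract the dissipation. Taking the real part of the pairing of $\lambda^{4}(i\lambda U-\mathcal{A}_jU)$ with $U$ and using $\mathrm{Re}(\mathcal{A}_jU,U)_{\mathcal{H}_j}=-\int_0^L D_2|v^4_x|^2\,dx$ (valid since $D_1=D_3=0$) gives $\lambda^{4}\int_0^L D_2|v^4_x|^2\,dx=o(1)$, whence $\|v^4_x\|=o(1)/\lambda^{2}$ on $(\alpha,\beta)$; differentiating the kinematic relation $v^4=i\lambda v^3-\lambda^{-4}f^3$ in $x$ then yields $\|v^3_x\|=o(1)/\lambda^{3}$ on $(\alpha,\beta)$. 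The genuinely harder step is to propagate these estimates, which live only on the shear-angle component and only on $(\alpha,\beta)$, into control of the transverse and longitudinal components $v^1,v^5$: with a single damping the only route is to feed the shear-angle equation \eqref{Bresse equation 2.6} back into the other two equations through the coupling term $k_1(v^1_x+v^3+\ell v^5)$ and the curvature term $\ell k_3(v^5_x-\ell v^1)$, and each such substitution costs a power of $\lambda$. This loss is precisely why one can only reach $l=4$ here, against $l=2$ in the three-damping case.

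Next I would localize with the cut-off $\eta\in C_c^{\infty}(\mathbb{R})$ equal to $1$ on $[\alpha+\epsilon,\beta-\epsilon]$ and supported in $(\alpha,\beta)$, and run multiplier arguments against $i\lambda\eta\overline{v^j}$ to promote the interior estimates to bounds controlling $\int_{\alpha+\epsilon}^{\beta-\epsilon}|\lambda v^j|^2\,dx$ for $j=1,3,5$, in the spirit of \eqref{Bresse equation 2.14} but with the weaker gain dictated by the single damping. Then, mirroring the construction used for Theorem \ref{PolyThe.2}, I would introduce auxiliary multiplier functions $(u,y,z)$ solving a Bresse system of the form \eqref{Bresse Aux System} carrying a localized viscous damping $\mathds{1}_{\omega_\epsilon}$, and exploit the associated uniform resolvent bound to obtain an a-priori estimate of the type \eqref{Bresse Aux Inequality}. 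Pairing $v^1,v^3,v^5$ with $i\rho_1\lambda\overline{u}$, $i\rho_2\lambda\overline{y}$, $i\rho_1\lambda\overline{z}$ and integrating by parts transports the interior smallness of $\lambda v^j$ to the whole interval, giving $\int_0^L|\lambda v^j|^2\,dx=o(1)$; pairing the momentum equations with $\overline{v^1},\overline{v^3},\overline{v^5}$ then yields $\int_0^L|v^j_x|^2\,dx=o(1)$. Together these force $\|U\|_{\mathcal{H}_j}=o(1)$, contradicting $\|U\|_{\mathcal{H}_j}=1$ and establishing the bound for $j=1$; the case $j=2$ is handled identically up to the functional setting $H_*^1$.

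The step I expect to be the main obstacle is exactly the propagation from the single dissipated shear-angle component to the transverse displacement $\varphi$ and the longitudinal displacement $w$. In contrast to the three-damping situation, the curvature coupling $\ell$ is here the sole channel by which dissipation reaches $v^1$ and $v^5$, so controlling the coupling and curvature terms without any a-priori regularity of $D_2$ at the interface is delicate, and it is what degrades the exponent from $l=2$ to $l=4$. A related subtlety is that the auxiliary comparison system must itself be uniformly stable along $i\mathbb{R}$: if a single-damping Bresse system fails to be exponentially stable, one would replace \eqref{Bresse Aux Inequality} by a polynomial resolvent bound for the auxiliary operator and track the resulting extra powers of $\lambda$ throughout the argument.
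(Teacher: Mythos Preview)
Your overall architecture is correct and matches the paper: Borichev--Tomilov with $l=4$, contradiction argument, dissipation estimate on $D_2 v^4_x$, localization with a cut-off $\eta$, and finally the auxiliary three-damped system \eqref{Bresse Aux System}--\eqref{Bresse Aux Inequality} to pass from interior smallness to $\|U\|_{\mathcal{H}_j}=o(1)$ on $(0,L)$. The last worry you raise is a non-issue: the auxiliary system is exactly the one of Section~\ref{Polynomial1}, with three \emph{frictional} dampings $\mathds{1}_{\omega_\epsilon}$, and it is exponentially stable by \cite{wehbey}, so \eqref{Bresse Aux Inequality} is available unchanged.

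There is, however, a genuine gap at the propagation step. You propose to ``run multiplier arguments against $i\lambda\eta\overline{v^j}$ \ldots\ in the spirit of \eqref{Bresse equation 2.14}''. That computation in Section~\ref{Polynomial1} works only because one already knows $\|v^1_x+v^3+\ell v^5\|=o(1)/\lambda^2$ and $\|v^5_x-\ell v^1\|=o(1)/\lambda^2$ on $(\alpha,\beta)$, which come from the dissipation of $D_1$ and $D_3$. With $D_1=D_3=0$ those estimates are unavailable, and testing the $v^1$-momentum equation against $\eta\overline{v^1}$ produces $\rho_1\int\eta|\lambda v^1|^2-k_1\int\eta|v^1_x|^2$ plus coupling terms you cannot yet control, so nothing follows. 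The paper instead uses \emph{cross-multipliers}: one tests the shear-angle momentum equation \eqref{Bresse Equation 6.8} against $\eta\,\overline{v^1_x}$, so that the coupling $k_1(v^1_x+v^3+\ell v^5)$ itself generates $k_1\int\eta|v^1_x|^2$ (Lemma~\ref{lemma Bresse polynomial1.8}); a second bootstrap pass with the multipliers $\eta\overline{v^1}$, $-\tfrac{k_1+k_3}{k_1}\eta\overline{v^1_x}$ and $\eta(k_2\overline{v^3_x}+D_2\overline{v^4_x})$ upgrades this to $\int\eta|v^1_x|^2+\int\eta|\lambda v^1|^2=o(1)/\lambda^2$; and finally one tests the $v^1$-momentum equation \eqref{Bresse Equation a.8} against $\eta\,\overline{v^5_x}$, so that the curvature terms combine into $\ell(k_1+k_3)\int\eta|v^5_x|^2$ (Lemma~\ref{lemma Bresse polynomial4.8}). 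The improved $o(1)/\lambda^2$ bound on $v^1$ is essential for this last step, which is why the bootstrap cannot be skipped. Only after these cross-multiplier estimates does one feed the interior smallness into the identity \eqref{Bresse equation 2.26} (with $D_1=D_3=0$) and into Lemma~\ref{lemma6.7} to conclude on $(0,L)$.
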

Referring to \cite{Borichev-Tomilov10}, \eqref{polydecay2.2} is verified if the following conditions
\begin{equation*}\tag {\rm H1}
	\ i\mathbb{R}\subseteq\rho\left(\mathcal{A}_j\right) 
\end{equation*}
and
\begin{equation*}\tag {\rm H4}
	\lim_{|\lambda|\rightarrow +\infty}\sup_{\lambda\in\mathbb{R}}\bigg\{\frac{1}{\lambda^4}\left\|\left(i\lambda I-\mathcal{A}_j\right)^{-1}\right\|_{\mathcal{L}\left(\mathcal{H}_j\right)}\bigg\}=O\left(1\right)
\end{equation*}
hold. \\
Condition  $\ i\mathbb{R}\subseteq\rho\left(\mathcal{A}_j\right)$ is  already proved in Lemma \ref{Bresse Lemma 2.6} and Lemma \ref{Bresse Lemma 2.7}. 

We will establish (H4) by  contradiction. Suppose that there exist a sequence of real numbers $\left(\lambda_n\right)_n$, with  $|\lambda_n|\to+\infty$  and a sequence of vectors
\begin{equation}\label{Bresse Equation 1.8}
	U_n=\left(v^1_n,v^2_n,v_n^3,v^4_n,v^5_n,v^6_n\right)^{\mathsf{T}}\in D\left(\mathcal{A}_j\right) \ \text{ with }\ \|U_n\|_{\mathcal{H}_j}=1
\end{equation}
such that
\begin{equation}\label{Bresse Equation 2.8}
	\lambda_n^4\left(\ i\lambda_nU_n-\mathcal{A}_jU_n\right)=\left(f^1_n,f^2_n,f_n^3,f^4_n,f^5_n,f^6_n\right)^{\mathsf{T}}\to 0\ \text{ in } \mathcal{H}_j,\quad j=1,2.
\end{equation}
We will check the condition (H4) by finding a contradiction with \eqref{Bresse Equation 1.8}-\eqref{Bresse Equation 2.8} such as $\left\| U_n\right\|_{\mathcal{H}_j} =o(1)$. 

Equation \eqref{Bresse Equation 2.8} is detailed as:
\begin{eqnarray}
	\ i\lambda_n v^1_n-v^2_n&=&\frac{f^1_n}{\lambda_n^4},\label{Bresse Equation 3.8}
	\\
	i\rho_1\lambda_n v^2_n-k_1 \left[\left(v^1_n\right)_x+v^3_n+\ell  v^5_n\right]_{x}-  \ell  k_3\left[\left(v^5_n\right)_x-\ell  v^1_n\right]& =& \rho_1\frac{f^2_n}{\lambda_n^4},\label{Bresse Equation 4.8}
	\\
	\ i\lambda_n v^3_n-v^4_n&=&\frac{f^3_n}{\lambda_n^4},\label{Bresse Equation 5.8}
	\\
	\ i \rho_2 \lambda_n v^4_n -\left[k_2 \left(v^3_n\right)_x + D_2 \left(v^4_n\right)_x\right]_{x}+ k_1\left[\left(v^1_n\right)_x+v^3_n+\ell  v^5_n\right]
	&=& \rho_2\frac{f^4_n}{\lambda_n^4},\label{Bresse Equation 6.8}
	\\
	i\lambda_n v^5_n-v^6_n&=& \frac{f^5_n}{\lambda_n^4},\label{Bresse equation 7.8}\\
	\ i \rho_1 \lambda_n v^6_n-\left[k_3\left(\left(v^5_n\right)_x-\ell  v^1_n\right) \right]_x + \ell  k_1\left[\left(v^1_n\right)_x+v^3_n+\ell  v^5_n\right]
	&=&\rho_1 \frac{f^6_n}{\lambda_n^4}.\label{Bresse Equation 8.8}
\end{eqnarray}
Inserting \eqref{Bresse Equation 3.8}, \eqref{Bresse Equation 5.8},
and \eqref{Bresse equation 7.8} into \eqref{Bresse Equation 4.8},\eqref{Bresse Equation 6.8} and \eqref{Bresse Equation 8.8} respectively, we get
\begin{eqnarray}
	\rho_1\lambda^2_n v^1_n+k_1 \left[\left(v^1_n\right)_x+v^3_n+\ell  v^5_n\right]_{x}+ \ell  k_3\left[\left(v^5_n\right)_x-\ell  v^1_n\right]& =& -i \rho_1 \frac{f_n^1}{\lambda_n^3}-\rho_1\frac{f^2_n}{\lambda_n^4},\label{Bresse Equation a.8}
	\\
	\rho_2 \lambda_n^2 v^3_n +\left[k_2 \left(v^3_n\right)_x + D_2 \left(v^4_n\right)_x\right]_{x}- k_1\left[\left(v^1_n\right)_x+v^3_n+\ell  v^5_n\right]
	&=&-i \rho_2 \frac{f_n^3}{\lambda_n^3} -\rho_2\frac{f^4_n}{\lambda_n^4},\label{Bresse Equation b.8}
	\\
	\rho_1 \lambda^2_n v^5_n+\left[k_3\left(\left(v^5_n\right)_x-\ell  v^1_n\right) \right]_x - \ell  k_1\left[\left(v^1_n\right)_x+v^3_n+\ell  v^5_n\right]
	&=&-i \rho_1\frac{f_n^5}{\lambda_n^3}-\rho_1 \frac{f^6_n}{\lambda_n^4}.\label{Bresse Equation c.8}
\end{eqnarray}
From \eqref{Bresse Equation 3.8}, \eqref{Bresse Equation 5.8}, \eqref{Bresse equation 7.8} and \eqref{Bresse Equation 1.8}, we deduce that:
\begin{equation}\label{Bresse Equation 9.8}
	\norm{v^1_n}=O(\frac{1}{\lambda_n}), \ \  \norm{v^3_n}=O(\frac{1}{\lambda_n}), \ \
	\norm{v^5_n}=O(\frac{1}{\lambda_n}).
\end{equation}
For clarity, we divide the proof into several lemmas. From  now on, for simplicity, we drop the index $n$.
\begin{lem}\label{Bresse polydiss2.1}
	Under all the above assumptions, we have:
	\begin{equation}\label{Bresse Equation 10.8} 
\int_{0}^{L}D_2| v^4_{x}|^2dx=\frac{o\left(1\right)}{\lambda^4}, \quad
\int_{\alpha}^{\beta}{|v^4_{x}|^2}dx=\frac{o\left(1\right)}{\lambda^4}
	\end{equation}
	and
	\begin{equation}\label{Bresse Equation 12.8} 
\int_{0}^{L}\eta| v^3_{x}|^2dx=\frac{o\left(1\right)}{\lambda^6}, \quad
\int_{\alpha}^{\beta}|v^3_{x}|^2dx=\frac{o\left(1\right)}{\lambda^6}.
	\end{equation}
\end{lem}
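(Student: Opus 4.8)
The plan is to read off the first pair of estimates directly from the dissipation of the semigroup and then to propagate them to $v^3_x$ through the differentiated velocity--displacement relation. First I would take the inner product of \eqref{Bresse Equation 2.8} with $U$ in $\mathcal{H}_j$. Since the rescaling factor is $\lambda^4$, dividing through and taking real parts gives
\begin{equation*}
-\mathrm{Re}\left(\mathcal{A}_jU,U\right)_{\mathcal{H}_j}=\int_0^L D_2|v^4_x|^2\,dx=\frac{1}{\lambda^4}\,\mathrm{Re}\left(F,U\right)_{\mathcal{H}_j},
\end{equation*}
where I used that $D_1=D_3=0$ by \eqref{onelocal}, so that the dissipation identity \eqref{ Equation 2.14} collapses to the single $D_2$ term. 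Because $\|U\|_{\mathcal{H}_j}=1$ and $F\to 0$ in $\mathcal{H}_j$, the right-hand side is $o(1)/\lambda^4$, which yields the first estimate in \eqref{Bresse Equation 10.8}. The second one follows at once from the structural hypothesis $D_2\geq d_0>0$ on $(\alpha,\beta)$, since
\begin{equation*}
\int_\alpha^\beta|v^4_x|^2\,dx\leq \frac{1}{d_0}\int_\alpha^\beta D_2|v^4_x|^2\,dx\leq \frac{1}{d_0}\int_0^L D_2|v^4_x|^2\,dx=\frac{o(1)}{\lambda^4}.
\end{equation*}

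To reach \eqref{Bresse Equation 12.8} I would exploit equation \eqref{Bresse Equation 5.8}, namely $i\lambda v^3-v^4=f^3/\lambda^4$. As $v^3,v^4\in H^1(0,L)$ and $f^3\to 0$ in $H^1_0(0,L)$ (or $H_*^1(0,L)$), this identity holds in $H^1$ and may be differentiated in $x$ to give $i\lambda v^3_x-v^4_x=f^3_x/\lambda^4$ in $L^2(0,L)$. Isolating $v^3_x$, squaring, and integrating over $(\alpha,\beta)$ then produces
\begin{equation*}
\int_\alpha^\beta|v^3_x|^2\,dx\leq \frac{2}{\lambda^2}\int_\alpha^\beta|v^4_x|^2\,dx+\frac{2}{\lambda^{10}}\int_\alpha^\beta|f^3_x|^2\,dx.
\end{equation*}
The first term is $o(1)/\lambda^6$ by the estimate just obtained, and the second is $o(1)/\lambda^{10}$ since $f^3_x\to 0$ in $L^2$; hence $\int_\alpha^\beta|v^3_x|^2\,dx=o(1)/\lambda^6$. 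Because $\eta$ is supported in $(\alpha,\beta)$ with $0\leq\eta\leq1$, the weighted estimate $\int_0^L\eta|v^3_x|^2\,dx=o(1)/\lambda^6$ is an immediate consequence, completing \eqref{Bresse Equation 12.8}.

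This lemma is essentially the \emph{dissipation input} for the whole frequency-domain argument, so I do not expect a genuine obstacle here; the only point deserving care is the legitimacy of differentiating \eqref{Bresse Equation 5.8}, which rests on the $H^1$-regularity of $v^3$, $v^4$ and of the data $f^3$, and the bookkeeping of the powers of $\lambda$ (the extra $\lambda^{-2}$ gained in passing from $v^4_x$ to $v^3_x$). The real difficulty of the $1/\sqrt{t}$ theorem lies further downstream: with only the single damping $D_2$ active, one must transfer these interior estimates to the full energy norm through auxiliary-problem multipliers, and it is that transfer — not the present lemma — that is responsible for the weaker $\lambda^4$ growth of the resolvent compared with the three-damping case.
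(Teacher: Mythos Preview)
Your proof is correct and follows essentially the same approach as the paper: take the real part of the inner product of \eqref{Bresse Equation 2.8} with $U$ to extract the dissipation estimate on $D_2|v^4_x|^2$, use the lower bound $D_2\geq d_0$ on $(\alpha,\beta)$ for the localized version, and then differentiate \eqref{Bresse Equation 5.8} to transfer the bound from $v^4_x$ to $v^3_x$ with an extra factor $\lambda^{-2}$. Your additional remarks on the $H^1$-regularity justifying the differentiation and on the support of $\eta$ giving the weighted estimate are helpful clarifications that the paper leaves implicit.
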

\begin{proof}Taking the inner product of \eqref{Bresse Equation 2.8} with $U$ in $\mathcal{H}_j$, we get:
	\begin{align}\label{Bresse Equation 11.8}
\mathrm{Re}\left(\ i\lambda^5 \norm{U}^2-\lambda^4\left(\mathcal{A}_jU,U\right)\right)_{\mathcal{H}_j}&=-\lambda^4\mathrm{Re}\left(\mathcal{A}_jU,U\right)_{\mathcal{H}_j} 
=\lambda^4\int_0^L D_2|v^4_{x}|^2 dx= o\left(1\right).
	\end{align}
	Thanks to \eqref{onelocal}, we obtain the desired asymptotic equation \eqref{Bresse Equation 10.8}. \\
	Next, differentiating equation \eqref{Bresse Equation 5.8}, we get: 
	$$i \lambda v^3_x= v^4_x+\frac{f_x^3}{\lambda^4},$$
	and consequently
	$$\int_{\alpha}^{\beta}|\lambda v^3_x|^2dx \leq 2\int_{\alpha}^{\beta}|v^4_x|^2dx + 2\int_{\alpha}^{\beta}\frac{|f^3_x|^2}{\lambda^8}dx.$$
	Using \eqref{Bresse Equation 10.8} and the fact that $f^3$ converges to zero in $H_0^1(0,L)$ (or in $H_*^1(0,L)$ ) in the above equation,
we get the desired  {estimate} \eqref{Bresse Equation 12.8}. Thus the proof is complete.
\end{proof}
{
	\begin{rk}
Again, these estimates are crucial for the rest of the proof and they will be used to prove each point of the global proof  divided in several lemmas.
	\end{rk}
}


\begin{lem}\label{Lemma  Bresse Poly.8}
Under all the above assumptions, we have:
\begin{equation}\label{Bresse Equation 19.8} 
\int_0^L\eta|\lambda v^3|^2dx=\frac{O\left(1\right)}{\lambda^2}
\quad
\mathrm{and}
\quad
\int_{\alpha+\epsilon}^{\beta-\epsilon}|\lambda v^3|^2dx=\frac{O\left(1\right)}{\lambda^2}.
	\end{equation}
\end{lem}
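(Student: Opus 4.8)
The plan is to test the reduced equation \eqref{Bresse Equation b.8} against the localized multiplier $\eta\,\overline{v^3}$, so that the target quantity $\int_0^L\eta|\lambda v^3|^2\,dx$ appears as the leading term, and then to absorb or bound every other term by means of the dissipation estimates of Lemma \ref{Bresse polydiss2.1} together with the a priori bounds \eqref{Bresse Equation 9.8}. Multiplying \eqref{Bresse Equation b.8} by $\eta\,\overline{v^3}$ in $L^2(0,L)$ and integrating the second order term by parts (the boundary terms vanish since $\mathrm{supp}\,\eta\subset(\alpha,\beta)$) yields the master identity
\begin{equation*}
\rho_2\int_0^L\eta|\lambda v^3|^2\,dx=\int_0^L\left(k_2 v^3_x+D_2 v^4_x\right)\left(\eta\,\overline{v^3}\right)_x\,dx+k_1\int_0^L\left(v^1_x+v^3+\ell v^5\right)\eta\,\overline{v^3}\,dx+\mathcal{R},
\end{equation*}
where $\mathcal{R}$ gathers the source contributions from $-i\rho_2\lambda^{-3}f^3-\rho_2\lambda^{-4}f^4$; since $f^3,f^4\to0$ and $\norm{v^3}=O(\lambda^{-1})$, one has $\mathcal{R}=\frac{o(1)}{\lambda^4}$.

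Next I would dispose of the dissipative term. Expanding $\left(\eta\,\overline{v^3}\right)_x=\eta_x\,\overline{v^3}+\eta\,\overline{v^3_x}$ and applying Cauchy--Schwarz, each of the four resulting integrals is supported in $(\alpha,\beta)$, where Lemma \ref{Bresse polydiss2.1} provides $\int_\alpha^\beta|v^3_x|^2\,dx=\frac{o(1)}{\lambda^6}$ and $\int_\alpha^\beta|v^4_x|^2\,dx=\frac{o(1)}{\lambda^4}$. Combined with $\norm{v^3}=O(\lambda^{-1})$ and the boundedness of $D_2$, this shows that $\int_0^L\left(k_2 v^3_x+D_2 v^4_x\right)\left(\eta\,\overline{v^3}\right)_x\,dx=\frac{o(1)}{\lambda^3}$, which is negligible compared with the asserted order $\lambda^{-2}$.

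The main obstacle is the coupling term $k_1\int_0^L\left(v^1_x+v^3+\ell v^5\right)\eta\,\overline{v^3}\,dx$, for a plain Cauchy--Schwarz bound of $\int_0^L v^1_x\,\eta\,\overline{v^3}\,dx$ only gives $O(\lambda^{-1})$, which is one power short. The remedy is to integrate by parts so as to move the derivative off $v^1$:
\begin{equation*}
\int_0^L v^1_x\,\eta\,\overline{v^3}\,dx=-\int_0^L v^1\,\eta_x\,\overline{v^3}\,dx-\int_0^L v^1\,\eta\,\overline{v^3_x}\,dx.
\end{equation*}
The first integral is $O(\lambda^{-2})$ by $\norm{v^1}=O(\lambda^{-1})$ and $\norm{v^3}=O(\lambda^{-1})$, while the second is even smaller thanks to $\norm{v^1}=O(\lambda^{-1})$ and the smallness of $v^3_x$ on $\mathrm{supp}\,\eta$. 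Likewise $\ell\int_0^L v^5\,\eta\,\overline{v^3}\,dx=O(\lambda^{-2})$ directly from \eqref{Bresse Equation 9.8}. The remaining piece $k_1\int_0^L v^3\,\eta\,\overline{v^3}\,dx=k_1\lambda^{-2}\int_0^L\eta|\lambda v^3|^2\,dx$ is a constant multiple of $\lambda^{-2}$ times the very quantity under study, and I would carry it to the left-hand side.

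Collecting all contributions in the master identity gives $\left(\rho_2-k_1\lambda^{-2}\right)\int_0^L\eta|\lambda v^3|^2\,dx=O(\lambda^{-2})$. Since $\rho_2-k_1\lambda^{-2}\to\rho_2>0$ as $|\lambda|\to+\infty$, this is exactly the first estimate in \eqref{Bresse Equation 19.8}; the second follows at once because $\eta\equiv1$ on $[\alpha+\epsilon,\beta-\epsilon]$.
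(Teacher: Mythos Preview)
Your proof is correct and follows essentially the same strategy as the paper: test \eqref{Bresse Equation b.8} against $\eta\,\overline{v^3}$, dispose of the dissipative term via Lemma~\ref{Bresse polydiss2.1}, and control the coupling term $k_1\int_0^L(v^1_x+v^3+\ell v^5)\eta\,\overline{v^3}\,dx$.

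The only point of departure is in the treatment of this coupling term. The paper does not split it or integrate by parts; it simply writes the term as $\rho_2^{-1}k_1\int_0^L\lambda^{-1}(v^1_x+v^3+\ell v^5)\,\eta\,\lambda\overline{v^3}\,dx$ and applies Young's inequality, absorbing $\tfrac12\int_0^L\eta|\lambda v^3|^2\,dx$ on the left and bounding the remainder by a constant times $\lambda^{-2}\norm{v^1_x+v^3+\ell v^5}^2=O(\lambda^{-2})$ from \eqref{Bresse Equation 1.8}. Your route---integrating the $v^1_x$ piece by parts to exploit $\norm{v^1}=O(\lambda^{-1})$, and absorbing only the $v^3$ piece---is a more hands-on alternative that gives the same conclusion. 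The paper's argument is slightly more direct (no integration by parts, no splitting), while yours shows that one does not actually need the full $L^2$-boundedness of the combination $v^1_x+v^3+\ell v^5$, only the weaker information $\norm{v^1},\norm{v^3},\norm{v^5}=O(\lambda^{-1})$ together with the local smallness of $v^3_x$.
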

\begin{proof}
	First, multiplying \eqref{Bresse Equation b.8} by $\rho_2^{-1}\eta\overline{v^3}$ and integrating by parts, we get:
	\begin{align} \label{Bresse Equation 22.8} 
\int_0^L \eta |\lambda v^3|^2dx=
&\, \rho_2^{-1}\int_0^L\left(k_2 v_x^3+D_2 v_x^4\right)\left(\eta^{\prime}\overline{v^3}+\eta \overline{v^3_x}\right)dx \nonumber 
+\rho_2^{-1}\int_0^L k_1\left(v_x^1+v^3+\ell v^5\right)\eta\overline{v^3}dx\\
&-\int_0^Li \eta \frac{f^3}{\lambda^3}\overline{v^3}dx
- \int_{0}^L \frac{f^4}{\lambda^4}\eta \overline{v^3}dx.
	\end{align}
	Then, using \eqref{Bresse Equation 10.8}, \eqref{Bresse Equation 12.8}, $\norm{v^3}=O(\frac{1}{\lambda})$
and the fact that $f^3$, $f^4$ converge to zero in $H_0^1(0,L)$ (or in $H_\star^1(0,L)$), $L^2(0,L)$ respectively, we deduce that:
	\begin{align}\label{Bresse Equation 23.8} 
&\rho_2^{-1}\int_0^L\left(k_2 v_x^3+D_2 v_x^4\right)\left(\eta^{\prime}\overline{v^3}+\eta \overline{v^3_x}\right)dx 
-\int_0^Li \eta \frac{f^3}{\lambda^3}\overline{v^3}dx
- \int_{0}^L \frac{f^4}{\lambda^4}\eta\overline{v^3}dx=\dfrac{o(1)}{\lambda^3}.
	\end{align} 
	Next, inserting \eqref{Bresse Equation 23.8} into \eqref{Bresse Equation 22.8}, we obtain: 
	\begin{equation*}
\int_0^L \eta |\lambda v^3|^2dx=\rho_2^{-1}\int_0^L k_1\frac{\left(v_x^1+v^3+\ell v^5\right)}{\lambda}\eta\lambda\overline{v^3}dx+\dfrac{o(1)}{\lambda^3}. 
	\end{equation*}
	Using Cauchy-Shwartz and Young's inequalities in the above equation, we get:
	\begin{equation*}
\int_0^L \eta |\lambda v^3|^2dx \leq 2|\rho_2^{-1}|^2\int_0^Lk_1^2\eta \frac{\left|v_x^1+v^3+\ell v^5\right|^2}{\lambda^2}dx+\frac{1}{2}\int_0^L \eta |\lambda v^3|^2dx +\dfrac{o(1)}{\lambda^3},
	\end{equation*}
	Consequently, 
	\begin{equation*}
\frac{1}{2}\int_0^L \eta |\lambda v^3|^2dx \leq 2|\rho_2^{-1}|^2\int_0^Lk_1^2\eta \frac{\left|v_x^1+v^3+\ell v^5\right|^2}{\lambda^2}dx +\dfrac{o(1)}{\lambda^3}.
	\end{equation*}
	Finally, using the fact that $\left(v_x^1+v^3+\ell v^5\right)$ is uniformly bounded in $L^2(0,L)$ and the definition of $\eta$, we get the desired estimates in \eqref{Bresse Equation 19.8} and
	the proof is thus complete.
\end{proof}
\begin{lem}\label{lemma Bresse polynomial1.8}
	Under all the above assumptions, we have:
	\begin{equation}\label{Bresse Equation 24a.8} 
\int_0^L\eta |v_x^1|^2dx=o(1), \quad 
\int_{\alpha+\epsilon}^{\beta-\epsilon} |v_x^1|^2dx=o(1)
	\end{equation}
	and 
	\begin{equation}\label{Bresse Equation 24b.8}
\int_0^L\eta |\lambda v^1|^2dx=
o(1), \quad  
\int_{\alpha+\epsilon}^{\beta-\epsilon} |\lambda v^1|^2dx=o(1).
	\end{equation}
\end{lem}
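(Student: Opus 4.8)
The plan is to transfer the localized dissipation---which by \eqref{Bresse Equation 12.8} and \eqref{Bresse Equation 19.8} is already felt by the shear angle $v^3$---onto the transverse displacement $v^1$. First I would record a single energy balance on the cut-off region, and then supply a second, independent identity of Rellich type in order to separate the kinetic part $\int_0^L\eta|\lambda v^1|^2$ from the potential part $\int_0^L\eta|v^1_x|^2$ (the bare integrals over $(\alpha+\epsilon,\beta-\epsilon)$ then follow at once, since $\eta\equiv1$ there).

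For the balance, I would multiply \eqref{Bresse Equation 4.8} by $\eta\overline{v^1}$, integrate by parts, and eliminate the term $i\rho_1\lambda v^2$ through \eqref{Bresse Equation 3.8} (so that $v^2=i\lambda v^1$ up to a negligible $L^2$ remainder). After moving the coupling contributions to the right, the left-hand side reduces to $\rho_1\int_0^L\eta|\lambda v^1|^2-k_1\int_0^L\eta|v^1_x|^2$, while the right-hand side contains only: the shear cross-terms $k_1\int_0^L(v^3+\ell v^5)\eta\overline{v^1_x}$, the cut-off layer terms $k_1\int_0^L(v^1_x+v^3+\ell v^5)\eta'\overline{v^1}$, the longitudinal coupling $\ell k_3\int_0^L(v^5_x-\ell v^1)\eta\overline{v^1}$, and the forcing. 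Each of these is $o(1)$: the factors $v^1,v^3,v^5$ carry the global smallness $\norm{v^i}=O(1/\lambda)$ from \eqref{Bresse Equation 9.8}, whereas $v^1_x$, $v^5_x$ and the shear $v^1_x+v^3+\ell v^5$ are uniformly bounded by $\norm{U}_{\HH_j}=1$, and the forcing vanishes since the $f^i\to0$. This yields the balance $\rho_1\int_0^L\eta|\lambda v^1|^2-k_1\int_0^L\eta|v^1_x|^2=o(1)$.

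The balance alone shows only that the kinetic and potential energies of $v^1$ agree up to $o(1)$; neither is individually small, precisely because $v^1$ carries no direct damping. To close, I would test the second-order equations \eqref{Bresse Equation a.8} and \eqref{Bresse Equation c.8} against $q\overline{v^1_x}$ and $q\overline{v^5_x}$ respectively, with $q$ a cut-off adapted to $\omega_\epsilon$, and take real parts to obtain Rellich-type identities. The decisive structural fact is that the skew longitudinal coupling of strength $\ell(k_1+k_3)$ between the two equations produces $2\ell(k_1+k_3)\,\mathrm{Re}\int_0^L v^5_x q\overline{v^1_x}$ in one and its conjugate mirror $-2\ell(k_1+k_3)\,\mathrm{Re}\int_0^L v^1_x q\overline{v^5_x}$ in the other, so that these cancel upon adding the two identities. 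What remains, after discarding the localized $v^3$-terms (small by \eqref{Bresse Equation 12.8} and \eqref{Bresse Equation 19.8}) and the cut-off boundary terms, is a weighted combination of $\int_0^L|\lambda v^1|^2$, $\int_0^L|v^1_x|^2$ and their $v^5$-counterparts with a definite sign on $\omega_\epsilon$. Combined with the balance identity this forces both $\int_0^L\eta|\lambda v^1|^2=o(1)$ and $\int_0^L\eta|v^1_x|^2=o(1)$, that is \eqref{Bresse Equation 24b.8} and \eqref{Bresse Equation 24a.8}.

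The hard part is exactly this separation. A single test against $\eta\overline{v^1}$ can never distinguish $|\lambda v^1|^2$ from $|v^1_x|^2$, and the Rellich identity for $v^1$ taken in isolation contains the term $\int_0^L v^5_x q\overline{v^1_x}$ in which neither factor is small on $\omega_\epsilon$; integrating by parts only trades it for an uncontrolled $\int_0^L v^5 q\overline{v^1_{xx}}$. Recognizing and exploiting the cancellation of this cross-term against its mirror image in the longitudinal equation---that is, treating the vertical and longitudinal displacements jointly rather than estimating $v^1$ in isolation---is what makes the argument work, and it is also the mechanism behind the slower rate $t^{-1/2}$. A secondary technical nuisance will be the boundary layer supported on $\mathrm{supp}\,\eta'$ (and $\mathrm{supp}\,q'$), which I would absorb using a nested family of cut-offs together with the global $O(1/\lambda)$ bounds.
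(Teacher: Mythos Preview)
Your first step—the balance $\rho_1\int_0^L\eta|\lambda v^1|^2-k_1\int_0^L\eta|v^1_x|^2=o(1)$ obtained by testing \eqref{Bresse Equation a.8} against $\eta\overline{v^1}$—is correct, and in fact coincides with the second half of the paper's argument, equation \eqref{Bresse Equation 36.8}. But your second step does not close. When you multiply \eqref{Bresse Equation a.8} by $q\overline{v^1_x}$ and take real parts, the leading contributions $\rho_1\,\mathrm{Re}\int\lambda^2 v^1 q\overline{v^1_x}$ and $k_1\,\mathrm{Re}\int v^1_{xx}q\overline{v^1_x}$ are perfect derivatives and integrate to $-\tfrac{\rho_1}{2}\int q'|\lambda v^1|^2$ and $-\tfrac{k_1}{2}\int q'|v^1_x|^2$; the same happens in the $v^5$ identity. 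So after the (correctly identified) cancellation of $\ell(k_1+k_3)\,\mathrm{Re}\int v^5_x q\overline{v^1_x}$ against its mirror, what survives is
\[
-\tfrac12\int_0^L q'\bigl(\rho_1|\lambda v^1|^2+k_1|v^1_x|^2+\rho_1|\lambda v^5|^2+k_3|v^5_x|^2\bigr)\,dx=o(1),
\]
which lives entirely on $\mathrm{supp}\,q'$ and carries no information on $\omega_\epsilon$ (where $q'\equiv0$). Worse, $q'$ has both signs and the relation now involves the as-yet-uncontrolled $v^5$ quantities. It cannot be combined with your balance identity—whose weight is $\eta$, not $q'$—to force either integral to be small. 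The claim of ``a definite sign on $\omega_\epsilon$'' is simply not what the Rellich computation with a compactly supported cut-off produces.

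The paper closes the gap by a different route: it exploits the \emph{damped} equation rather than the two undamped ones. The shear coupling $k_1(v^1_x+v^3+\ell v^5)$ appears in the $\psi$-equation \eqref{Bresse Equation 6.8}; multiplying that equation by $\eta\overline{v^1_x}$ produces $k_1\int_0^L\eta|v^1_x|^2$ directly with the right sign, while every other term is small: the time term $i\rho_2\lambda v^4$ integrates by parts onto $\eta\overline{v^1}$ and $\eta'\overline{v^1}$ (controlled via \eqref{Bresse Equation 10.8} and \eqref{Bresse Equation 19.8}), and the stiffness term $(k_2v^3_x+D_2v^4_x)_x$ integrates by parts to pair $k_2v^3_x+D_2v^4_x=o(\lambda^{-2})$ on $(\alpha,\beta)$ with $\eta\overline{v^1_{xx}}$, where $\lambda^{-1}v^1_{xx}$ is uniformly bounded by \eqref{Bresse Equation 4.8}. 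This gives \eqref{Bresse Equation 24a.8} outright; your balance identity then yields \eqref{Bresse Equation 24b.8}. The mechanism you missed is that the transfer from $v^3$ to $v^1$ runs through the coupling in the second equation, not through a Rellich manipulation of the first and third.
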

\begin{proof}
	Our first aim here is to prove 
	\begin{equation*}\label{Bresse Equation 28.8}
\int_{0}^{L}\eta|v^1_x|^2dx=o(1).
	\end{equation*} 
	For this sake, multiplying \eqref{Bresse Equation 6.8} by $\eta \overline{v_x^1}$	and integrating by parts, we get:
	\begin{align}\label{Bresse Equation 25.8}
&-i\int_0^L \lambda \rho_2 v^4 \eta^{\prime}\overline{v^1}dx - i \int_0^L \lambda \rho_2 v_x^4 \eta \overline{v^1}dx +\int_0^L (k_2 v^3_x+D_2v_x^4)(\eta\overline{v^1_{xx}})dx +\int_0^L (k_2 v_x^3+D_2 v_x^4)(\eta^{\prime}\overline{v_x^1})dx \nonumber\\
&+\int_0^L\eta k_1|v_x^1|^2dx +\int_0^L\eta k_1v^3\overline{v^1_x}dx +\int_0^L\ell k_1\eta v^5\overline{v_x^1}dx= \int_0^L\rho_2\frac{f^4}{\lambda^4}\eta \overline{v_x^1}dx.
	\end{align}
	Now, we need to estimate each term of \eqref{Bresse Equation 25.8}:\\
	$\bullet$ Using \eqref{Bresse Equation 9.8}, \eqref{Bresse Equation 19.8} and the fact that $f^3$ converges to zero in $H_0^1(0,L)$ (or $H_*^1(0,L)$), we get:  
	\begin{equation}\label{Bresse Equation 26.8}
-i\int_0^L \lambda \rho_2 v^4 \eta^{\prime}\overline{v^1}dx=-i\int_0^L\lambda\rho_2(i\lambda v^3-\dfrac{f^3}{\lambda^4})\eta^{\prime}\overline{v^1}=\int_0^L\rho_2\lambda^2v^3\eta^{\prime}\overline{v^1}+ i\int_0^L\dfrac{f^3}{\lambda^3}\eta^{\prime}\overline{v^1}=o(1).
	\end{equation}
	$\bullet$ Using \eqref{Bresse Equation 10.8} and the fact that $\lambda v^1$ is uniformly bounded in $L^2(0,L)$, we obtain:
	\begin{equation}
- i \int_0^L \lambda \rho_2 v_x^4 \eta \overline{v^1}dx=\frac{o(1)}{\lambda^2}.
	\end{equation}
	$\bullet$ From \eqref{Bresse Equation 4.8}, we remark that $\dfrac{1}{\lambda}v^1_{xx}$ is uniformly bounded in $L^2(0,L)$.
This fact combined with \eqref{Bresse Equation 10.8} and \eqref{Bresse Equation 12.8} yields
	\begin{equation}
\int_0^L (k_2\lambda v^3_x+D_2\lambda v_x^4)(\eta\dfrac{\overline{v^1_{xx}}}{\lambda})dx=\frac{o(1)}{\lambda}.
	\end{equation}
	$\bullet$ Using \eqref{Bresse Equation 10.8}, \eqref{Bresse Equation 12.8} and the fact that $v_x^1$ is uniformly bounded in $L^2(0,L)$, we get:
	\begin{equation} 
\int_0^L (k_2 v_x^3+D_2 v_x^4)(\eta^{\prime}\overline{v_x^1})dx=\frac{o(1)}{\lambda^2}
.
	\end{equation}
	$\bullet$ Using \eqref{Bresse Equation 9.8} and the fact that $v_x^1$ is uniformly bounded in $L^2(0,L)$, we obtain: 
	\begin{equation}
\int_0^L\eta k_1v^3\overline{v^1_x}dx +\int_0^L\ell k_1\eta v^5\overline{v_x^1}dx=o(1).
	\end{equation}
	$\bullet$ Using the fact that $f^4$ converges to zero in $L^2(0,L)$ and $v_x^1$ is uniformly bounded in $L^2(0,L)$, we get: 
	\begin{equation} \label{Bresse Equation 27.8}
\int_0^L\rho_2\frac{f^4}{\lambda^4}\eta \overline{v_x^1}dx=\frac{o(1)}{\lambda^4}.
	\end{equation}
	Finally, inserting equations \eqref{Bresse Equation 26.8}-\eqref{Bresse Equation 27.8} into \eqref{Bresse Equation 25.8} and  using the definition of $\eta$, 
	we get the desired estimates in \eqref{Bresse Equation 24a.8}.\newline
	Next, our second aim is to prove
	\begin{equation*}\label{Bresse Equation 29.8} 
\int_{0}^{L}\eta|\lambda v^1|^2dx=o(1).
	\end{equation*}
	For this, multiplying \eqref{Bresse Equation a.8} by   $\rho_1^{-1} \eta \overline{v^1}$ and integrating by parts, we get:
	\begin{align}\label{Bresse Equation 36.8}
\int_0^L \eta |\lambda v^1|^2dx =& \rho_1^{-1}\int_0^L k_1(v^1_x+v^3+\ell v^5)(\eta^{\prime}\overline{v^1}+\eta \overline{v_x^1})dx 
-\rho_1^{-1}\int_0^L \ell k_3 (v^5_x-\ell v^1)\eta \overline{v^1}\\
&-\int_0^L\left(\frac{f^2}{\lambda^4}+i\frac{f^1}{\lambda^3}\right)\eta \overline{v^1}dx.  \nonumber
	\end{align} 
	So, using \eqref{Bresse Equation 9.8}, \eqref{Bresse Equation 24a.8}, the fact that $(v_x^1+v^3+\ell v^5)$, $(v_x^5-\ell v^1)$
are uniformly bounded in $L^2(0,L)$ and $f^1$, $f^2$ converge respectively to zero in $H_0^1(0,L)$, $L^2(0,L)$ in the right hand side of the above equation
and using the definition of $\eta$, we get the desird estimates in  \eqref{Bresse Equation 24b.8}.
\end{proof} 

\begin{lem}
	Under all the above assumptions, we have:
	\begin{equation}\label{Bresse Equation 24.8} 
\int_0^L\eta |v_x^1|^2dx=\frac{o(1)}{\lambda^2}, 
\quad 
\int_{\alpha + \epsilon}^{\beta-\epsilon} |v_x^1|^2dx=\frac{o(1)}{\lambda^2}
	\end{equation}
	and
	\begin{equation}\label{Bresse Equation 24N.8}
\int_0^L\eta |\lambda v^1|^2dx=\frac{o(1)}{\lambda^2}
\quad 
\int_{\alpha + \epsilon}^{\beta-\epsilon}|\lambda v^1|^2dx=\frac{o(1)}{\lambda^2}.
	\end{equation}
\end{lem}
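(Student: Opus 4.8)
The statement sharpens the two $o(1)$ bounds of Lemma \ref{lemma Bresse polynomial1.8} by a factor $\lambda^{-2}$, so the plan is to re-run its two multiplier identities while now retaining every power of $\lambda$ and inserting the quantitative dissipation estimates. I would keep the identity \eqref{Bresse Equation 36.8}, obtained by testing \eqref{Bresse Equation a.8} against $\rho_1^{-1}\eta\,\overline{v^1}$, to control $\int_0^L\eta\,|\lambda v^1|^2$, and the identity \eqref{Bresse Equation 25.8}, obtained by testing \eqref{Bresse Equation 6.8} against $\eta\,\overline{v^1_x}$, to isolate $k_1\int_0^L\eta\,|v^1_x|^2$. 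The decisive new inputs are the bounds of Lemma \ref{Bresse polydiss2.1}, namely $\norm{D_2^{1/2}v^4_x}=o(1)/\lambda^2$ and $\norm{v^3_x}_{L^2(\alpha,\beta)}=o(1)/\lambda^3$, together with the interior bound $\norm{\sqrt\eta\,\lambda v^3}=O(1)/\lambda$ of Lemma \ref{Lemma Bresse Poly.8} and the freshly proved $o(1)$ estimates \eqref{Bresse Equation 24a.8}--\eqref{Bresse Equation 24b.8}. Pairing these by Cauchy--Schwarz, each shear-type cross term acquires the extra $\lambda^{-2}$, so the terms built from $v^3,\,v^3_x,\,v^4_x$ are immediately of the required size.

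The two estimates must be proved in the right order: first \eqref{Bresse Equation 24N.8}, then \eqref{Bresse Equation 24.8}. The reason is that \eqref{Bresse Equation 25.8} contains the second-order term $\int_0^L(k_2 v^3_x+D_2 v^4_x)\,\eta\,\overline{v^1_{xx}}\,dx$; here $v^1_{xx}$ is only $O(\lambda)$ in $L^2$ a priori, because solving \eqref{Bresse Equation a.8} for $(v^1_x+v^3+\ell v^5)_x$ expresses $v^1_{xx}$ through $\lambda^2 v^1$ and $v^5_x$. To make the pairing with the $o(1)/\lambda^2$ dissipation factor land at $o(1)/\lambda^2$ I must first know $\norm{\sqrt\eta\,v^1_{xx}}=O(1)$, i.e. $\norm{\sqrt\eta\,\lambda^2 v^1}=O(1)$, which is exactly what \eqref{Bresse Equation 24N.8} provides (it gives $\norm{\sqrt\eta\,\lambda v^1}=o(1)/\lambda$). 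Thus \eqref{Bresse Equation 24N.8} is established first and then fed back into \eqref{Bresse Equation 25.8}.

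For the transition layer (the support of $\eta'$) the shear contributions cause no trouble: the derivative bounds \eqref{Bresse Equation 10.8}--\eqref{Bresse Equation 12.8} hold over the whole of $(\alpha,\beta)$, and by the fundamental theorem of calculus they also upgrade $v^3$ itself to $O(1)/\lambda^2$ uniformly on $(\alpha,\beta)$, starting from its interior $L^2$-smallness in \eqref{Bresse Equation 19.8}. The genuinely delicate point, and the main obstacle, is the undamped longitudinal strain $v^5_x-\ell v^1$ appearing in \eqref{Bresse Equation 36.8}: it carries no dissipation and only the crude energy bound $O(1)$ is available, so a naive Cauchy--Schwarz leaves a residue of size $o(1)/\lambda$ rather than $o(1)/\lambda^2$. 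I expect this to force treating the coupled pair $(v^1,v^5)$ simultaneously, deriving for $v^5$ from \eqref{Bresse Equation c.8} the same two estimates as for $v^1$ and closing the resulting coupled system of inequalities. The terms that are genuinely proportional to the left-hand sides (for instance the $\ell^2 v^1$ piece of $v^5_x-\ell v^1$, which produces $\lambda^{-2}\int_0^L\eta|\lambda v^1|^2$) are harmless and get absorbed by Young's inequality for $\lambda$ large; balancing the cross-coupling between the vertical and longitudinal displacements against the shear smallness is where the argument's weight lies, and it is precisely this difficulty that is responsible for the slower $t^{-1/2}$ rate compared with the three-damping case.
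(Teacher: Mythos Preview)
Your diagnosis of the obstacle is correct---the undamped longitudinal strain $v^5_x$ is indeed the crux---but the proposed remedy is not. The plan to prove \eqref{Bresse Equation 24N.8} first, using the identity \eqref{Bresse Equation 36.8}, cannot get started: that identity contains both the $(v^5_x-\ell v^1)\eta\overline{v^1}$ term and the $(v^1_x+v^3+\ell v^5)\eta\overline{v^1_x}$ term, and the latter hides $\int\eta|v^1_x|^2$ as well as $\ell\int\eta v^5\overline{v^1_x}$, neither of which is known to be $o(1)/\lambda^2$. So the two estimates cannot be decoupled in the order you suggest. Your fallback---treat $(v^1,v^5)$ as a coupled system---is only a slogan: writing the analogous pair of identities for $v^5$ produces four inequalities with cross terms $\int\eta v^5_x\overline{v^1}$ and $\int\eta v^1_x\overline{v^5}$ that do not close against each other, because equation \eqref{Bresse Equation c.8} has no damping input to exploit.

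The paper's route is different and hinges on an algebraic cancellation you have not seen. One multiplies \eqref{Bresse Equation a.8} by $\eta\overline{v^1}$ (your Step for \eqref{Bresse Equation 24N.8}), which yields a $+\ell(k_1+k_3)\,\mathrm{Re}\int\eta v^5_x\overline{v^1}$ term; one then multiplies \eqref{Bresse Equation b.8} by $-\dfrac{k_1+k_3}{k_1}\,\eta\overline{v^1_x}$, so that after an integration by parts the $\ell v^5$ coupling produces $-\ell(k_1+k_3)\,\mathrm{Re}\int\eta v^5_x\overline{v^1}$. Adding the two identities eliminates $v^5_x$ entirely and leaves
\[
\rho_1\!\int_0^L\!\eta|\lambda v^1|^2\,dx \;+\; k_3\!\int_0^L\!\eta|v^1_x|^2\,dx \;+\;\frac{k_1+k_3}{k_1}\,\mathrm{Re}\!\int_0^L\!(k_2v^3_x+D_2v^4_x)\,\eta\,\overline{v^1_{xx}}\,dx \;=\;\frac{o(1)}{\lambda^2},
\]
with both target quantities appearing on the left with positive sign. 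The remaining $v^1_{xx}$ term is not handled by first knowing $\|\sqrt\eta\,\lambda^2 v^1\|=O(1)$ as you propose; instead one multiplies \eqref{Bresse Equation a.8} by $\eta(k_2\overline{v^3_x}+D_2\overline{v^4_x})$, which converts $k_1\int\eta(k_2\overline{v^3_x}+D_2\overline{v^4_x})v^1_{xx}$ into $-\rho_1\int\eta\,\lambda v^1\cdot\lambda(k_2\overline{v^3_x}+D_2\overline{v^4_x})+o(1)/\lambda^2$. Since $\|\lambda(k_2v^3_x+D_2v^4_x)\|_{L^2(\alpha,\beta)}=o(1)/\lambda$ by Lemma~\ref{Bresse polydiss2.1}, Young's inequality absorbs this into $\tfrac12\rho_1\int\eta|\lambda v^1|^2+o(1)/\lambda^2$, and both estimates follow simultaneously. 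The specific coefficient $(k_1+k_3)/k_1$ and the third multiplier are the two ideas you are missing.
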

\begin{proof}
	For the clarity of the proof, we divide the proof into several steps:\\
	{\bf Step 1.} In this step, we will prove
	\begin{equation} \label{ Bresse Result1.8}
\rho_1 \int_0^L\eta|\lambda v^1|^2dx-k_1\int_0^L\eta|v_x^1|^2dx+ \ell (k_1+k_3)\mathrm{Re}\left\{ \int_0^L \eta v_x^5\overline{v_1}dx\right\}= \dfrac{o(1)}{\lambda^2}. 
	\end{equation}
	For this sake, multiplying \eqref{Bresse Equation a.8} by $\eta \overline{v^1}$ and integrating by parts, we get:
	\begin{align}\label{ Bresse equation Result1a.8}
&\rho_1 \int_0^L\eta|\lambda v^1|^2dx - k_1\int_0^L \eta |v_x^1|^2dx -k_1\mathrm{Re}\bigg\{\int_0^L \eta^{\prime} v_x^1\overline{v^1}dx \bigg\}+ k_1\mathrm{Re}\left\{\int_0^L\eta v_x^3\overline{v^1}dx\right\} \nonumber
\\&
+ \ell (k_1+k_3)\mathrm{Re}\left\{ \int_0^L \eta v_x^5\overline{v_1}dx\right\} -\ell^2 k_3\int_0^L\eta |v^1|^2dx=\frac{o(1)}{\lambda^4}.
	\end{align}
	Now, we need to estimate some terms of \eqref{ Bresse equation Result1a.8} as follows: \newline
	$\bullet$  We get after integrating by parts 
	\begin{equation*}
-k_1\mathrm{Re}\bigg\{\int_0^L \eta^{\prime} v_x^1\overline{v^1}dx \bigg\}=-\dfrac{k_1}{2}\int_0^L \eta^{\prime} (|v^1|^2)_xdx  = \dfrac{k_1}{2}\int_0^L \eta^{''} |v^1|^2dx.
	\end{equation*}
	Using \eqref{Bresse Equation 24b.8} in the previous equation, we obtain:
	\begin{equation} \label{ Bresse equation  Result1b.8}
-k_1\mathrm{Re}\bigg\{\int_0^L \eta^{\prime} v_x^1\overline{v^1}dx \bigg\}=\frac{o(1)}{\lambda^2}.
	\end{equation}
	$\bullet$ Using  \eqref{Bresse Equation 12.8} and  \eqref{Bresse Equation 24b.8}, we deduce that
	\begin{equation} \label{ Bresse equation  Result1c.8}
k_1\mathrm{Re}\left\{\int_0^L\eta v_x^3\overline{v^1}dx\right\} -\ell^2 k_3\int_0^L\eta |v^1|^2dx=\frac{o(1)}{\lambda^2}.
	\end{equation} 
	Finally, inserting \eqref{ Bresse equation  Result1b.8} and  \eqref{ Bresse equation  Result1c.8} in \eqref{ Bresse equation Result1a.8}, we get the desired estimate \eqref{ Bresse Result1.8}.
	\newline
	{\bf Step 2.} In this step, we will prove
	\begin{align}\label{Bresse Result 2.8}
&\left(\dfrac{k_1+k_3}{k_1}\right)\mathrm{Re}\bigg\{\int_0^L (k_2v_x^3+D_2v_x^4)\eta \overline{v^1_{xx}})dx\bigg\}+ (k_1+k_3)\int_0^L\eta |v_x^1|^2dx \nonumber
\\&
-\ell(k_1+k_3)\mathrm{Re}\bigg\{\int_0^L\eta v_x^5\overline{v^1}dx\bigg\}=\dfrac{o(1)}{\lambda^2}.
	\end{align}
	In order to prove \eqref{Bresse Result 2.8}, multiplying \eqref{Bresse Equation b.8} by the multiplier  $-\bigg(\dfrac{k_1+k_3}{k_1}\bigg)\eta \overline{v_x^1}$ and integrating by parts, we get: 
	\begin{align}\label{Bresse equation Result 2a.8}
&\underbrace{-\rho_2\bigg(\dfrac{k_1+k_3}{k_1}\bigg)\mathrm{Re}\bigg\{\int_0^L\eta\lambda^2 v^3 \overline{v_x^1}dx\bigg\}}_{I_1}\underbrace{-\bigg(\dfrac{k_1+k_3}{k_1}\bigg)\mathrm{Re}\bigg\{\int_0^L(k_2v_x^3+D_2v_x^4)_x\eta \overline{v_x^1}dx\bigg\}}_{I_2} \nonumber\\
& +(k_1+k_3)\int_0^L\eta|v_x^1|^2dx+ \underbrace{(k_1+k_3)\mathrm{Re}\bigg\{\int_0^L\eta v^3 \overline{v_x^1}dx\bigg\}}_{I_3}-\ell(k_1+k_3)\mathrm{Re}\bigg\{\int_0^L\eta v_x^5\overline {v^1}dx\bigg\}=\dfrac{o(1)}{\lambda^2}.
	\end{align}
	Next, we need to estimate $I_1, I_2$ and $I_3$. 
	\newline
	$\bullet$ Integrating  by parts $I_1$ and then using \eqref {Bresse Equation 12.8}, \eqref{Bresse Equation 19.8} and \eqref{Bresse Equation 24b.8}, we deduce that: \\
	\begin{equation}\label{Bresse equation Result 2b.8}
I_1= \rho_2\bigg(\dfrac{k_1+k_3}{k_1}\bigg)\mathrm{Re}\bigg\{\int_0^L\left(\eta^{\prime}\lambda v^3 \lambda\overline{v^1}+ \eta \lambda v_x^3 \lambda\overline v^1\right)dx\bigg\}=\dfrac{o(1)}{\lambda^2}.
	\end{equation}
	%
	$\bullet$
	Integrating  by parts $I_2$ and then using \eqref{Bresse Equation 10.8}, \eqref {Bresse Equation 12.8} and \eqref{Bresse Equation 24a.8}, we get: \\
	\begin{align}\label{Bresse equation Result 2c.8}
I_2&=\bigg(\dfrac{k_1+k_3}{k_1}\bigg)\mathrm{Re}\bigg\{\int_0^L\left(k_2v_x^3+D_2v_x^4\right)\eta^{\prime} \overline{v_x^1}dx\bigg\}dx+\bigg(\dfrac{k_1+k_3}{k_1}\bigg)\mathrm{Re}\bigg\{\int_0^L(k_2v_x^3+D_2v_x^4)\eta \overline{v_{xx}^1}dx\bigg\}dx\nonumber\\
&=\bigg(\dfrac{k_1+k_3}{k_1}\bigg)\mathrm{Re}\bigg\{\int_0^L(k_2v_x^3+D_2v_x^4)\eta \overline{v_{xx}^1}dx\bigg\}dx+\dfrac{o(1)}{\lambda^2}. 
	\end{align}
	$\bullet$
	By using  \eqref{Bresse Equation 19.8} and \eqref{Bresse Equation 24a.8}, we deduce that 
	\begin{equation}\label{Bresse equation Result 2d.8}
I_3=(k_1+k_3)\mathrm{Re}\bigg\{\int_0^L\eta v^3 \overline{v_x^1}dx\bigg\}=\dfrac{o(1)}{\lambda^2}.
	\end{equation}
	Finally, inserting \eqref{Bresse equation Result 2b.8}, \eqref{Bresse equation Result 2c.8}, and \eqref{Bresse equation Result 2d.8} into \eqref{Bresse equation Result 2a.8},
we get the desired estimate \eqref{Bresse Result 2.8}.

	\noindent{\bf Step3.} Combining \eqref{ Bresse Result1.8} and \eqref{Bresse Result 2.8}, we get
	\begin{equation}\label{Bresse equation Result3.8}
\rho_1\int_0^L\eta|\lambda v^1|^2dx + k_3 \int_0^L\eta |v_x^1|^2dx+\bigg(\dfrac{k_1+k_3}{k_1}\bigg)\mathrm{Re}\bigg\{\int_0^L\left(k_2v_x^3+D_2v_x^4\right)\eta \overline{v_{xx}^1}dx\bigg\}= \dfrac{o(1)}{\lambda^2}.
	\end{equation}
	{\bf Step 4.} In this step, we conclude the proof of the main estimates \eqref{Bresse Equation 24.8} and \eqref{Bresse Equation 24N.8}. For this aim, multiplying
\eqref{Bresse Equation a.8} by $\eta \left(k_2 \overline{ v_x^3} +D_2 \overline{v_x^4}\right) $, we get:
	\begin{align}\label{Bresse equation Result3a.8}
&\mathrm{Re}\left\{\int _0^L \rho_1\eta \lambda v^1\lambda\left(k_2 \overline{ v_x^3} +D_2 \overline{v_x^4}\right)dx\right\} +k_1\mathrm{Re}\left\{ \int_0^L\eta \left(k_2 \overline{ v_x^3} +D_2 \overline{v_x^4}\right) v_{xx}^1dx\right\} \nonumber\\
&\underbrace{+k_1\mathrm{Re}\left\{ \int_0^L\eta\left(v_x^3+\ell v_x^5\right) \left(k_2 \overline{ v_x^3} +D_2 \overline{v_x^4}\right) dx\right\} +\ell k_3\mathrm{Re}\left\{ \int_0^L\left(v_x^5-\ell v^1\right)\eta \left(k_2 \overline{ v_x^3} +D_2 \overline{v_x^4}\right) dx\right\}}_{I}=\frac{o(1)}{\lambda^2}.
	\end{align}
	Using the fact that $v_x^3$ and $(v_x^5-\ell v^1)$ are uniformly bounded in $L^2(0,L)$, \eqref{Bresse Equation 10.8} and \eqref{Bresse Equation 12.8}, we get:
	\begin{align}\label{Bresse equation Result3b.8}
I&=k_1\mathrm{Re}\left\{ \int_0^L\eta\left(v_x^3+\ell v_x^5\right) \left(k_2 \overline{ v_x^3} +D_2 \overline{v_x^4}\right) dx\right\} +\ell k_3\mathrm{Re}\left\{ \int_0^L\left(v_x^5-\ell v^1\right)\eta \left(k_2 \overline{ v_x^3} +D_2 \overline{v_x^4}\right) dx\right\}\nonumber\\
&=\frac{o(1)}{\lambda^2}.
	\end{align}
	Substitute \eqref{Bresse equation Result3b.8} in \eqref{Bresse equation Result3a.8}, we get:
	\begin{equation}\label{Bresse equation Result3c.8}
k_1\mathrm{Re}\left\{ \int_0^L\eta \left(k_2 \overline{ v_x^3} +D_2 \overline{v_x^4}\right) v_{xx}^1dx\right\}=-\mathrm{Re}\left\{\int _0^L \rho_1\eta \lambda v^1\lambda\left(k_2 \overline{ v_x^3} +D_2 \overline{v_x^4}\right)dx\right\}+\frac{o(1)}{\lambda^2}.
	\end{equation}
	Now, substitute \eqref{Bresse equation Result3c.8} in \eqref{Bresse equation Result3.8}, we obtain:
	\begin{equation}\label{Bresse equation Result3d.8}
\rho_1\int_0^L\eta|\lambda v^1|^2dx + k_3 \int_0^L\eta |v_x^1|^2dx=-\bigg(\dfrac{k_1+k_3}{k_1^2}\bigg)\mathrm{Re}\left\{\int _0^L \rho_1\eta \lambda v^1\lambda\left(k_2 \overline{ v_x^3} +D_2 \overline{v_x^4}\right)dx\right\}+\frac{o(1)}{\lambda^2}.
	\end{equation}
	We will now apply Young's inequality in \eqref{Bresse equation Result3d.8}. For this sake , let $\epsilon>0$ be given. We get:
	\begin{align*}\label{Bresse equation Result3e.8}
\rho_1\int_0^L\eta|\lambda v^1|^2dx + k_3 \int_0^L\eta |v_x^1|^2dx
&\leq \underbrace{\dfrac{1}{\epsilon}\bigg(\dfrac{k_1+k_3}{k_1^2}\bigg)^2\int _0^L \rho_1\eta \lambda^2\left|  k_2 \overline{ v_x^3} +D_2 \overline{v_x^4}\right|^2dx}_{=\frac{o(1)}{\lambda^2}} + \epsilon \int_0^L \rho_1 \eta |\lambda v^1|^2dx+\frac{o(1)}{\lambda^2}
\\
& \leq \epsilon \int_0^L \rho_1 \eta |\lambda v^1|^2dx+\frac{o(1)}{\lambda^2}.
	\end{align*}
	Consequently, we have:
	\begin{equation*}
(1-\epsilon)\rho_1\int_0^L\eta|\lambda v^1|^2dx + k_3 \int_0^L\eta |v_x^1|^2dx=\frac{o(1)}{\lambda^2}.
	\end{equation*}
	Finally, it is sufficient to take $\epsilon =\dfrac{1}{2}$ in the previous equation to  get the desired estimates in \eqref{Bresse Equation 24.8}
	and \eqref{Bresse Equation 24N.8}. The proof is thus complete.
\end{proof}
\begin{lem} \label{lemma Bresse polynomial4.8}
	Under all the above assumptions, we have:
	\begin{equation}\label{Bresse Equation 52.8} 
\int_{\alpha+\epsilon}^{\beta-\epsilon} |v^5_x|^2dx=o(1) 
	\end{equation}
\end{lem}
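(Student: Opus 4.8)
The plan is to exploit that the coupling term $\ell(k_1+k_3)v^5_x$ appears explicitly in the first resolvent equation \eqref{Bresse Equation a.8}; multiplying that equation by $\eta\,\overline{v^5_x}$ will therefore reproduce the very quantity $\int_0^L\eta|v^5_x|^2dx$ we wish to control. First I would rewrite the left-hand side of \eqref{Bresse Equation a.8} in the expanded form $\rho_1\lambda^2v^1+k_1v^1_{xx}+k_1v^3_x+\ell(k_1+k_3)v^5_x-\ell^2k_3v^1$, multiply by $\eta\,\overline{v^5_x}$, integrate over $(0,L)$ and take real parts. This produces the main term $\ell(k_1+k_3)\int_0^L\eta|v^5_x|^2dx$, so the whole lemma reduces to showing that every other contribution is $o(1)$.

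Most of the remaining terms are harmless. The refined estimates \eqref{Bresse Equation 24N.8} give $\norm{\lambda^2v^1}_{L^2(\mathrm{supp}\,\eta)}=o(1)$ and $\norm{v^1}_{L^2(\mathrm{supp}\,\eta)}=o(1)$, while $\norm{v^5_x}=O(1)$ by the normalisation \eqref{Bresse Equation 1.8} and \eqref{Bresse Equation 12.8} gives $\norm{v^3_x}_{L^2(\mathrm{supp}\,\eta)}=o(1)$; hence by Cauchy--Schwarz the terms carrying $\lambda^2v^1$, $v^1$ and $v^3_x$ are each $o(1)$, and the right-hand side of \eqref{Bresse Equation a.8} is negligible since $f^1,f^2\to0$.

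The single delicate term is the second order contribution $k_1\int_0^Lv^1_{xx}\,\eta\,\overline{v^5_x}dx$. One cannot bound $v^1_{xx}$ pointwise (from \eqref{Bresse Equation a.8} it is only $O(\lambda)$ on $\mathrm{supp}\,\eta$), and substituting the equation for $v^1_{xx}$ would merely cancel the main term. Instead I would integrate by parts once, moving the derivative onto $v^5_x$; since $\eta$ has compact support in $(\alpha,\beta)$ the boundary terms vanish and one is left with $-k_1\int_0^Lv^1_x\,\eta\,\overline{v^5_{xx}}dx-k_1\int_0^Lv^1_x\,\eta'\,\overline{v^5_x}dx$. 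For the first of these I would control $v^5_{xx}$ through the third equation \eqref{Bresse Equation c.8}, which yields $\norm{v^5_{xx}}_{L^2(\mathrm{supp}\,\eta)}=O(\lambda)$; pairing this large factor against the small factor $\norm{v^1_x}_{L^2(\mathrm{supp}\,\eta)}=o(1)/\lambda$ from \eqref{Bresse Equation 24.8} gives $o(1)$. This cancellation of the two opposite powers of $\lambda$ is the heart of the argument.

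The hard part will be the last integral $-k_1\int_0^Lv^1_x\,\eta'\,\overline{v^5_x}dx$, because $\eta'$ is supported near $\alpha$ and $\beta$, where $v^1_x$ is not known to be small. I would circumvent this by carrying out the whole computation with a cut-off whose derivative is supported strictly inside $(\alpha+\epsilon,\beta-\epsilon)$, i.e.\ inside the region where the estimate $\norm{v^1_x}=o(1)/\lambda$ of \eqref{Bresse Equation 24.8} is already available; there the integral is $o(1)$ by Cauchy--Schwarz together with $\norm{v^5_x}=O(1)$. Collecting all the estimates gives $\ell(k_1+k_3)\int_0^L\eta|v^5_x|^2dx=o(1)$, and since $\ell(k_1+k_3)>0$ and $\eta\equiv1$ on $(\alpha+\epsilon,\beta-\epsilon)$, the bound \eqref{Bresse Equation 52.8} follows.
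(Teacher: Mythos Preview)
Your argument is essentially the paper's own: both multiply \eqref{Bresse Equation a.8} by $\eta\,\overline{v^5_x}$, isolate the main term $\ell(k_1+k_3)\int_0^L\eta|v^5_x|^2\,dx$, integrate the second-order part by parts, and kill the resulting $\eta\,v^1_x\,\overline{v^5_{xx}}$ contribution by pairing the $o(1)/\lambda$ bound on $\sqrt{\eta}\,v^1_x$ from \eqref{Bresse Equation 24.8} against the $O(\lambda)$ bound on $v^5_{xx}$ coming from \eqref{Bresse Equation c.8}. The only difference is that you are more explicit about the commutator term $\int_0^L v^1_x\,\eta'\,\overline{v^5_x}\,dx$: the paper simply cites \eqref{Bresse Equation 24.8} for it, whereas you correctly observe that $\eta'$ lives where \eqref{Bresse Equation 24.8} gives no smallness, and you cure this with a nested cut-off supported inside $(\alpha+\epsilon,\beta-\epsilon)$ --- a standard and legitimate device that the paper leaves implicit.
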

\begin{proof}
	Multiplying \eqref{Bresse Equation a.8} by $\eta \overline {v_x^5}$ and integrating over $(0,L)$, we get:
	\begin{align}
(\ell  k_1+ \ell  k_3) \int_0^L \eta |v^5_x|^2 =&-\rho_1\int_0^L\eta \lambda^2v^1\overline{v_x^5}dx+k_1 \int_0^L v_x^1 \eta^{\prime}\overline{v^5_x}dx+ k_1 \int_0^L \eta \lambda v_x^1 \frac{\overline{v_{xx}^5}}{\lambda}dx\\
&-k_1 \int_0^L \eta v_x^3 \overline{v_x^5}dx +\ell ^2k_3 \int_0^L\eta v^1 \overline{v_x^5}dx+ \frac{o(1)}{\lambda^3}.\nonumber
	\end{align}
	Finally, using \eqref{Bresse Equation 12.8}, \eqref{Bresse Equation 24.8}, \eqref{Bresse Equation 24N.8}, the fact that $v_x^5$ is uniformly bounded in $L^2(0,L)$
and $\dfrac{1}{\lambda}v^5_{xx}$ is uniformly bounded in $L^2(0,L)$ due to \eqref{Bresse Equation 8.8} in the right hand side of the previous equation,
we get the desired  {estimate} \eqref{Bresse Equation 52.8}. 
	The proof is thus complete. 
\end{proof}
\begin{lem} \label{lemma Bresse polynomial5.8}
	Under all the above assumptions, we have:
	\begin{equation}\label{Bresse Equation 54.8} 
\int_{\alpha+\epsilon}^{\beta-\epsilon} |\lambda v^5|^2dx=o\left(1\right).
	\end{equation}
\end{lem}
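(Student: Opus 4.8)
The plan is to test the reduced third equation \eqref{Bresse Equation c.8} against the localized multiplier $\rho_1^{-1}\eta\,\overline{v^5}$ and integrate by parts over $(0,L)$. Since $\eta\in C_c^\infty$ is supported in $(\alpha,\beta)$, all boundary contributions vanish, and the first term reproduces exactly $\int_0^L\eta|\lambda v^5|^2\,dx$, the quantity we want to control. Because $\eta\equiv 1$ on $[\alpha+\eps,\beta-\eps]$ and $\eta\ge 0$, it suffices to prove that this weighted integral is $o(1)$; then
\[
\int_{\alpha+\eps}^{\beta-\eps}|\lambda v^5|^2\,dx\le\int_0^L\eta|\lambda v^5|^2\,dx=o(1),
\]
which is precisely \eqref{Bresse Equation 54.8}.

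Carrying out the integration by parts produces the identity
\[
\int_0^L\eta|\lambda v^5|^2\,dx=\rho_1^{-1}k_3\int_0^L(v^5_x-\ell v^1)(\eta'\overline{v^5}+\eta\,\overline{v^5_x})\,dx+\rho_1^{-1}\ell k_1\int_0^L(v^1_x+v^3+\ell v^5)\eta\,\overline{v^5}\,dx+\int_0^L\Big(-\tfrac{if^5}{\lambda^3}-\tfrac{f^6}{\lambda^4}\Big)\eta\,\overline{v^5}\,dx.
\]
The key structural point is that the multiplier carries $\overline{v^5}$ rather than $\lambda\overline{v^5}$, so each term inherits the decay $\norm{v^5}=O(1/\lambda)$ supplied by \eqref{Bresse Equation 9.8}. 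Since $\norm{U}_{\HH_j}=1$ forces both $v^5_x-\ell v^1$ and $v^1_x+v^3+\ell v^5$ to be uniformly bounded in $L^2(0,L)$, the integrals against $\eta'\overline{v^5}$ and $\eta\,\overline{v^5}$ are $O(1/\lambda)=o(1)$; the source integral is likewise $o(1)$ using $f^5\to 0$ in $H^1$ (or $H^1_*$), $f^6\to 0$ in $L^2$, and $\norm{v^5}=O(1/\lambda)$.

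The one term deserving care, and the only genuine obstacle, is $\int_0^L\eta(v^5_x-\ell v^1)\overline{v^5_x}\,dx$, because $v^5_x$ is merely bounded, not small, in $L^2(0,L)$. I would split it as $\int_0^L\eta|v^5_x|^2\,dx-\ell\int_0^L\eta v^1\overline{v^5_x}\,dx$; the second piece is controlled by $\ell\,\norm{v^1}\,\norm{v^5_x}=O(1/\lambda)$, while the first piece is exactly the localized quantity shown to be $o(1)$ in the proof of Lemma \ref{lemma Bresse polynomial4.8}, which establishes $\int_0^L\eta|v^5_x|^2\,dx=o(1)$ (of which \eqref{Bresse Equation 52.8} is the restriction to $[\alpha+\eps,\beta-\eps]$). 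The whole estimate therefore rests on having the localized smallness of $v^5_x$ and not merely its boundedness. Collecting the four bounds yields $\int_0^L\eta|\lambda v^5|^2\,dx=o(1)$, which completes the proof.
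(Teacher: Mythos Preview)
Your proof is correct and follows essentially the same approach as the paper: multiply \eqref{Bresse Equation c.8} by $\rho_1^{-1}\eta\,\overline{v^5}$, integrate by parts, and estimate each resulting term using \eqref{Bresse Equation 9.8}, the uniform $L^2$-boundedness of $v^5_x-\ell v^1$ and $v^1_x+v^3+\ell v^5$, and the localized smallness of $v^5_x$ from Lemma~\ref{lemma Bresse polynomial4.8}. You are in fact more explicit than the paper in isolating the one non-routine term $\int_0^L\eta|v^5_x|^2\,dx$ and noting that the \emph{proof} of Lemma~\ref{lemma Bresse polynomial4.8} actually yields the $\eta$-weighted estimate (not merely the restriction \eqref{Bresse Equation 52.8}), which is what is needed here.
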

\begin{proof}
	Multiplying \eqref{Bresse Equation c.8} by   $ \eta \rho_1^{-1}\overline{v^5}$, we get:
	\begin{align}\label{Bresse Equation 57.8}
\int_0^L \eta |\lambda v^5|^2dx =& \rho_1^{-1}\int_0^Lk_3(v^5_x-\ell v^1) (\eta^{\prime}\overline{v^5}+\eta \overline{v_x^5})dx 
+\rho_1^{-1}\int_0^L \ell k_1(v^1_x+v^3+\ell v^5) \eta \overline{v^1}dx\\
&-\int_0^L\left(\frac{f^6}{\lambda^4}+i\frac{f^5}{\lambda^3}\right)\eta \overline{v^5}dx.  \nonumber
	\end{align} 
	Using \eqref{Bresse Equation 9.8}, \eqref{Bresse Equation 52.8}, the fact that $(v^1_x+v^3+\ell v^5)$, $(v^5_x-\ell v^1)$ are uniformly bounded in $L^2(0,L)$,  $f^5$, $f^6$ converge to zero  respectively in $H_0^1(0,L)$ (or in $H_*^1(0,L)$), $L^2(0,L)$ in the right hand side of the above equation, we deduce:
	\begin{equation*}
\int_0^L \eta |\lambda v^5|^2dx =o(1).
	\end{equation*}
	Finally, using the definition of $\eta$, we get the desired  {estimate} \eqref{Bresse Equation 54.8}. The proof is thus complete. 
\end{proof}
\textbf{Proof of Theorem \ref{PolyThe1.2}} 
 It follows from Lemmas \ref{Bresse polydiss2.1}, \ref{Lemma  Bresse Poly.8},  \ref{lemma Bresse polynomial1.8}, \ref{lemma Bresse polynomial4.8} and  \ref{lemma Bresse polynomial5.8} that $\norm{U_n}_{\mathcal{\HH}_j}=o(1)$ on $(\alpha +\epsilon, \beta-\epsilon).$
 So one can use estimate \eqref{Bresse equation 2.26} with $D_1=D_3=0$ and Lemma \ref{lemma6.7}
 to conclude that $\norm{U_n}_{\mathcal{\HH}_j}=o(1)$ on $(0,L)$ which is a contradiction
 with \eqref{Bresse Equation 1.8}. Consequently, condition (H4) holds and the energy of smooth solutions of system \eqref{eqq1.1'}
 decays polynomially as $t$ goes to infinity.  
\hfill{$\square$}

\section{Lack of exponential  stability }\label{Bresse Section 2.4}\label{NonUni}
It was proved that the Bresse system subject to one or two viscous dampings is exponentially stable if and only if the wave propagate at the same speed
(see \cite{wehbey} and \cite{Ghader}). In the case of viscoelastic damping, the situation is more delicate. In this section, we prove that the
Bresse system \eqref{eqq1.1'}-\eqref{DNND} subject to two global viscoelastic dampings is not exponentially stable even if the waves propagate at same speed.
So, we assume that:
\begin{equation}\label{damping1}
	D_1= 0 \quad \mathrm{and} \quad D_2=D_3=1 \quad \mathrm{in} \quad (0,L). 
\end{equation}
\begin{thm}\label{theorem lackexp1}
	Under hypothesis \eqref{damping1}, the Bresse system \eqref{eqq1.1'}-\eqref{DNND}, is not exponentially stable in the energy space $\HH_2$.
\end{thm}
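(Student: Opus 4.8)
The plan is to disprove exponential stability through the Huang--Pr\"uss resolvent criterion, Theorem \ref{type} i): it suffices to produce a sequence $\lambda_n\in\mathbb{R}$ with $|\lambda_n|\to+\infty$ and data $F_n\in\mathcal{H}_2$ with $\|F_n\|_{\mathcal{H}_2}=1$ such that the solutions $U_n=(i\lambda_n I-\mathcal{A}_2)^{-1}F_n$ satisfy $\|U_n\|_{\mathcal{H}_2}\to+\infty$, so that $\|(i\lambda_n I-\mathcal{A}_2)^{-1}\|_{\mathcal{L}(\mathcal{H}_2)}\to+\infty$. These resolvents are well defined, since $i\mathbb{R}\subset\rho(\mathcal{A}_2)$ by Lemmas \ref{Bresse Lemma 2.6}--\ref{Bresse Lemma 2.7} (the choice $D_2=D_3=1$ trivially meets (SSC)).

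First I would exploit that, under the boundary conditions \eqref{DNND}, the operator $\mathcal{A}_2$ leaves invariant each finite-dimensional mode spanned by $\varphi\sim\sin(\mu_n x)$, $\psi,w\sim\cos(\mu_n x)$, with $\mu_n=n\pi/L$: one checks that the coupling quantities $\varphi_x+\psi+\ell w$ and $w_x-\ell\varphi$ respect this $\sin/\cos$ split, and that $\cos(\mu_n x)$ is compatible with the Neumann and zero-mean conditions on $\psi,w$. Writing $v^1=A\sin(\mu_n x)$, $v^3=B\cos(\mu_n x)$, $v^5=C\cos(\mu_n x)$ and forcing only the first momentum equation, $F_n=(0,\gamma\sin(\mu_n x),0,0,0,0)$ with $\gamma$ a fixed normalizing constant, the resolvent system $(i\lambda_n-\mathcal{A}_2)U_n=F_n$ reduces to a $3\times3$ linear system for $(A,B,C)$, naturally expressed through the shear strain amplitude $S=A\mu_n+B+\ell C$ and the axial strain amplitude $T=C\mu_n+\ell A$; here $T$ is exactly the combination $w_x-\ell\varphi$ carrying the $D_3$ damping, while $B\mu_n$ carries the $D_2$ damping.

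The crux is the choice of frequency: I would take $\lambda_n=c_1\sqrt{\mu_n^2-\ell^2}$, i.e. $\rho_1\lambda_n^2=k_1(\mu_n^2-\ell^2)$, the resonant frequency of the \emph{reduced, undamped} equation $\rho_1\varphi_{tt}=k_1\varphi_{xx}+k_1\ell^2\varphi$ obtained when the damped strains $\psi_x$ and $w_x-\ell\varphi$ are switched off. With this choice the leading terms $-\rho_1\lambda_n^2 A$ and $k_1\mu_n S$ of the $\varphi$-equation cancel \emph{identically}, leaving it in the reduced form $k_1\mu_n B+\ell(k_1+k_3+i\lambda_n)T=\rho_1\gamma$. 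Solving the companion equations for the slaved quantities gives $B=O(\mu_n^{-2})$ and $T=O(\mu_n^{-1})$, so the damping felt by the $\varphi$-mode is genuinely of lower order; a self-consistent leading-order solve then shows $A_n$ stays bounded away from $0$ and $\infty$ (asymptotically $A_n\to-\rho_1\gamma/(2\ell^2k_1)$). Since the energy norm weights the $\varphi$-velocity by $\rho_1|\lambda_n|^2$, which is of order $\mu_n^2$, one gets $\|U_n\|_{\mathcal{H}_2}^2\ge\rho_1|\lambda_n|^2|A_n|^2+k_1|S_n|^2$, an expression of order $\mu_n^2$; hence $\|U_n\|_{\mathcal{H}_2}\sim c\,\mu_n\to+\infty$ against $\|F_n\|_{\mathcal{H}_2}=1$, the desired contradiction.

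The main obstacle is the asymptotic resolution of this $3\times3$ system: one must verify the exact leading-order cancellation produced by the resonant choice of $\lambda_n$ and then rigorously control the slaved variables $B_n,T_n$ (equivalently, bound the determinant of the mode matrix from below) to certify that $A_n\not\to0$. This is precisely where the hypothesis $D_1=0$ is essential, since a nonzero shear damping $D_1$ would dissipate $S$ directly and destroy the resonance; it is also the mechanism underlying the conjectured optimality of the merely polynomial rates obtained in Sections \ref{Polynomial1}--\ref{Polynomial2}, as the dissipation reaches the vertical displacement $\varphi$ only through the asymptotically decoupled combination $w_x-\ell\varphi$, hence at a rate that degenerates as $n\to+\infty$.
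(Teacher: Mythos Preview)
Your argument is correct and shares the paper's overall template---disprove the uniform resolvent bound via the single-mode ansatz $v^1=A\sin(\mu_n x)$, $v^3=B\cos(\mu_n x)$, $v^5=C\cos(\mu_n x)$ and a $3\times 3$ linear solve---but the key choices differ. The paper forces the \emph{shear} equation, taking $F=(0,0,0,\cos(\mu_n x),0,0)$, tunes $\lambda_n$ to the $\psi$-resonance $\rho_2\lambda_n^2=k_2\mu_n^2$, solves a simplified $3\times3$ system in which the second-order damping contributions $D_2v^4_{xx}$ and $\big(D_3(v^6_x-\ell v^2)\big)_x$ have been omitted, and only afterwards checks that the true residual $(i\lambda_n I-\mathcal{A}_2)V_n$ stays bounded; the blow-up is read off from $|\lambda_n B_n|\sim n$. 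You instead force the only \emph{undamped} equation (the $\varphi$-equation, via $F_n=(0,\gamma\sin(\mu_n x),0,0,0,0)$) and tune $\lambda_n$ to the reduced $\varphi$-resonance $\rho_1\lambda_n^2=k_1(\mu_n^2-\ell^2)$. This is the physically natural target, since under $D_1=0$ the vertical displacement feels dissipation only indirectly through $T=w_x-\ell\varphi$; your estimates $T_n=O(\mu_n^{-1})$, $B_n=O(\mu_n^{-2})$ and $A_n\to-\rho_1\gamma/(2\ell^2k_1)\neq0$ make this quantitative, and the identity $k_1\mu_n B+\ell(k_1+k_3+i\lambda_n)T=\rho_1\gamma$ you derive after the resonant cancellation is exactly right. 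A further practical advantage of your route is that your $3\times3$ system retains the full damping (the $i\lambda$ factors in $k_2+i\lambda$, $k_3+i\lambda$) from the outset, so $U_n$ solves $(i\lambda_n I-\mathcal{A}_2)U_n=F_n$ \emph{exactly} and there is no a posteriori residual to control---whereas the paper's approach trades cleaner resonance algebra for that extra verification step at the end.
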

\begin{proof}
	For the proof of Theorem \ref{theorem lackexp1},  it suffices to show that there exists 
	\begin{itemize}
\item a sequence  $(\lambda_n)\subset \rr$ with $\displaystyle \lim_{n\to+\infty}\abs{\lambda_n}=+\infty$, and
\item a sequence  $(V_n)\subset D(\AA_2)$, 
	\end{itemize}
	such that $(i\lambda_n I-\AA_2)V_n$ is bounded in $\HH_2$ and $\displaystyle\lim_{n\to+\infty}\norm{V_n}=+\infty$. For the sake of clarity, we skip the index $n$.
	Let $F=(0,0,0,f_4,0,0)\in \HH_2$
	with
	$$ f_4(x)=\cos\left( \frac{n\pi x}{L}\right), \ \ 
	\lambda = \frac{n\pi \sqrt{\rho_2 k_2}}{L \rho_2},\ \, n\in \nn.$$
	We solve the following equations:
	\begin{equation}
i\lambda v^1-v^2= 0,\label{es1}
	\end{equation}
	\begin{equation}
i \lambda \rho_1v^2-k_1\left(v_{xx}^1+v^3_x+\ell v^5_x\right) -\ell k_3\left(v_x^5-\ell v^1\right)-\ell \left(v_x^6-\ell v^2\right)=0 ,\label{es2}
	\end{equation}
	\begin{equation}
i\lambda v^3-v^4=0,\label{es3}
	\end{equation}
	\begin{equation}
i\lambda \rho_2 v^4 -k_2v_{xx}^3+k_1\left(v^1_x+v^3+\ell  v^5\right)=
\rho_2f_4 ,\label{es4}
	\end{equation}
	\begin{equation}
i \lambda v^5 -v^6=0,\label{es5}
	\end{equation}
	\begin{equation}
i\lambda \rho_1 v^6-k_3\left(v^5_{xx}-\ell  v^1_x\right) +\ell v^2_x+\ell  k_1\left(v^1_x+v^3+\ell  v^5\right) = 0.\label{es6}
	\end{equation}
	Eliminating $v^2$, $v^4$ and $v^6$ in (\ref{es2}), (\ref{es4}) and (\ref{es6}) by (\ref{es1}), (\ref{es3}) and (\ref{es5}),
	we get:
	\begin{equation}
\lambda^2 \rho_1v^1+k_1\left(v_{xx}^1+v^3_x+\ell v^5_x\right)  +\ell \left(k_3+i \lambda\right)\left(v_x^5-\ell v^1\right) =0,
\label{es7}
	\end{equation}
	\begin{equation}
\lambda^2 \rho_2v^3+k_2 v^3_{xx}  - k_1\left( v^1_x+v^3+\ell v^5\right)= -\rho_2 f^4,\label{es8}
	\end{equation}
	\begin{equation}
\lambda^2 \rho_1 v^5+k_3 \left( v^5_{xx}-\ell v^1_x\right )-i \lambda \ell v^1_{x} -\ell  k_1\left(v^1_x+v^3+\ell  v^5\right) = 0 .\label{es9}
	\end{equation}
	This can be solved by the ansatz:
	\begin{equation}
v^1=A\sin\left(\frac{n\pi x}{L}\right), \ \ \
v^3=B\cos\left(\frac{n\pi x}{L}\right), \ \ \ 
v^5=C\cos\left(\frac{n\pi x}{L}\right) \label{ess}
	\end{equation}
	where $A$, $B$ and $C$ depend on $\lambda$ are constants to be determined. Notice that $k_2\left(\frac{n \pi}{L} \right)^2 -\rho_2 \lambda^2=0$,
and inserting (\ref{ess}) in (\ref{es7})-(\ref{es9}) we obtain that:
	\begin{equation}
\left( \left(\frac{n \pi}{L}\right)^2k_1-\lambda^2 \rho_1+\left(k_3+i \lambda\right)\ell ^2\right)A + k_1 \left(\frac{n \pi}{L}\right)B + \left( k_1+k_3+i\lambda \right)\ell  \left(\frac{n \pi}{L}\right) C= 0 ,\label{es10}
	\end{equation}
	\begin{equation}
k_1 \left(\frac{n \pi}{L}\right) A + k_1 B +\ell k_1C= \rho_2,\label{es11}
	\end{equation}
	\begin{equation}
\left(k_1+k_3+i\lambda\right) \ell  \left(\frac{n \pi}{L}\right) A + \ell  k_1 B + \left[k_3\left(\frac{n \pi}{L}\right)^2-\lambda^2\rho_1 +\ell ^2k_1\right]C= 0.\label{es12}
	\end{equation}
	Equivalently,
	\begin{equation}
\begin{pmatrix}
\left(\frac{n \pi}{L}\right)^2k_1-\lambda^2 \rho_1+\left(k_3+i \lambda\right)\ell ^2& k_1 \left(\frac{n \pi}{L}\right) &  \left( k_1+k_3+i\lambda \right)\ell  \left(\frac{n \pi}{L}\right) \\
k_1 \left(\frac{n \pi}{L}\right) &  k_1 &\ell k_1\\
\left(k_1+k_3+i\lambda\right) \ell  \left(\frac{n \pi}{L}\right) & \ell  k_1 &k_3\left(\frac{n \pi}{L}\right)^2-\lambda^2\rho_1 +\ell ^2k_1
\end{pmatrix}
\begin{pmatrix}
A\\
B\\
C
\end{pmatrix}
=
\begin{pmatrix}
0\\
\rho_2 \\
0
\end{pmatrix}.
	\end{equation}
	This implies that:
	\begin{equation}\label{ess1}
A= \dfrac{\left(k_2 \rho_1 - \rho_2 k_3\right)\rho_2^2L}{\pi \left(k_2 \rho_1^2 - k_3 \rho_1 \rho_2 + \rho_2 \ell ^2\right)k_2 n}+O(n^{-2}),
	\end{equation}
	\begin{equation}\label{ess2}
B=\dfrac{\rho_2 \left(k_1 k_3 \rho_2^2 + \left(\left(-k_1-k_3\right)\rho_1+ \ell ^2\right)k_2 \rho_2 + k_2^2 \rho_1^2\right)}{k_1 \left(\left(-k_3 \rho_1 + \ell ^2\right)\rho_2 + k_2 \rho_1^2\right)k_2} + O(n^{-1}),
	\end{equation}
	\begin{equation}\label{ess3}
C= \dfrac{i \ell \rho_2^2 L \sqrt{\rho_2 k_2}}{\pi\left(\left(-k_3 \rho_1 +\ell ^2\right)\rho_2 + k_2 \rho_1^2\right)k_2 n}+ O(n^{-2}).
	\end{equation}
	Now, let $V_n=\left(v^1, \ i \lambda v^1,v^3, \ i \lambda v^3,v^5, \ i \lambda v^5 \right)$, where $v^1, v^3$ and $v^5$ are given by \eqref{ess} and
\eqref{ess1}-\eqref{ess3}. It is easy to check that
	$$\norm{V_n}_{\HH_2} \geq \sqrt{\rho_2}\norm{\lambda v^3} \sim |B \lambda| \sim |n| \rightarrow +\infty \ \ \mathrm{as} \ \ n \rightarrow +\infty.$$
	On the other hand, using \eqref{es1}-\eqref{es6}, we deduce that 
	$$\norm{(i\lambda I-\AA_2)V_n}_{\HH_2}^2= \norm{(0, 0, 0, \rho^2 f^4-i \lambda D_2 v^3_{xx}, 0, i \lambda D_3 v^5_{xx})}_{\HH_2}^2\leq c .$$
	Consequently, $\norm{(i\lambda I-\AA_2)V_n}_{\HH_2}^2$ is bounded as $n$ tends to $+\infty$. Thus the proof is complete.
\end{proof}
\begin{rk}
	By a similar way, we can prove that the Bresse system \eqref{eqq1.1'}-\eqref{DNND} subject to only one viscoelastic damping is also not exponentially
stable even if the waves propagate at same speed.  
	\hfill$\square$
\end{rk}
\section{Additional results and summary}\label{additional}
\paragraph{\textbf{Global Kelvin--Voigt damping : analytic stability}}
 In \cite{F.Huang1988}, Huang  considered a one-dimensional wave equation with global Kelvin-Voigt damping and he proved 
that the semigroup associated to the equation is not only exponentially stable, but also is analytic. So, it is logic that in the case of three 
waves equations with three global dampings, the decay will be also analytic.

In this part, we state the analytic stability of the Bresse systems \eqref{eqq1.1'}-\eqref{DDDD} and  \eqref{eqq1.1'}-\eqref{DNND}
provided that there exists a positive constant $d_0$ such that:
\begin{equation}\label{condition1}
	D_1, \,  D_2 \, \, D_3 \geq d_0 >0 \ \ {\rm{for \, every}} \ x \in (0,L).
\end{equation}
\begin{thm}\label{analytic}
	Assume that condition \eqref{condition1} holds. Then, the $C_0$-semigroup $e^{t\mathcal{A}_j}$, for $j=1,2,$ is analytically stable.
\end{thm}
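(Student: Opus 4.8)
The plan is to verify the resolvent characterization of analyticity, part \textit{iii)} of Theorem \ref{type}. Since the global hypothesis \eqref{condition1} is far stronger than condition (SSC), Lemmas \ref{Bresse Lemma 2.6} and \ref{Bresse Lemma 2.7} already provide $i\mathbb{R}\subseteq\rho(\mathcal{A}_j)$, so it only remains to prove the uniform estimate $\sup_{\lambda\in\mathbb{R}}|\lambda|\,\|(i\lambda I-\mathcal{A}_j)^{-1}\|_{\mathcal{L}(\mathcal{H}_j)}<+\infty$. Equivalently, writing $(i\lambda I-\mathcal{A}_j)U=F$ with $U=(v^1,\dots,v^6)^{\mathsf T}$ and $F=(f^1,\dots,f^6)^{\mathsf T}\in\mathcal{H}_j$, I would show $\|U\|_{\mathcal{H}_j}\leq C|\lambda|^{-1}\|F\|_{\mathcal{H}_j}$ for all $|\lambda|$ large, with the two cases $j=1,2$ handled in parallel as in the earlier lemmas.

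First I would record the dissipation estimate: taking the real part of $(\,(i\lambda I-\mathcal{A}_j)U,U)_{\mathcal{H}_j}=(F,U)_{\mathcal{H}_j}$ and using \eqref{ Equation 2.14} together with $D_1,D_2,D_3\geq d_0>0$ on all of $(0,L)$ gives
\[
\|v^2_x+v^4+\ell v^6\|^2+\|v^4_x\|^2+\|v^6_x-\ell v^2\|^2\leq \tfrac{1}{d_0}\|F\|\,\|U\|.
\]
Denote the three ``strain velocities'' on the left by $P,Q,R$. Substituting the odd resolvent lines $v^2=i\lambda v^1-f^1$, $v^4=i\lambda v^3-f^3$, $v^6=i\lambda v^5-f^5$, each of $P,Q,R$ equals $i\lambda$ times one of $(v^1_x+v^3+\ell v^5)$, $v^3_x$, $(v^5_x-\ell v^1)$ up to a data term; this converts the bound above into the potential-energy estimate
\[
k_1\|v^1_x+v^3+\ell v^5\|^2+k_2\|v^3_x\|^2+k_3\|v^5_x-\ell v^1\|^2\leq \tfrac{C}{\lambda^2}\big(\|F\|\,\|U\|+\|F\|^2\big),
\]
so the potential part of $\|U\|^2$ is of lower order.

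The heart of the argument, and the step I expect to be the main obstacle, is the kinetic part $\rho_1\|v^2\|^2+\rho_2\|v^4\|^2+\rho_1\|v^6\|^2$, because the dissipation only controls the combinations $P,Q,R$ and not the velocities themselves, and because analyticity forces a gain of a full power of $\lambda$ (mere boundedness would only yield exponential stability). The idea is to multiply the second, fourth and sixth resolvent equations by $\overline{v^2},\overline{v^4},\overline{v^6}$ respectively, integrate by parts (boundary terms vanishing by the boundary/domain conditions), and take imaginary parts. The leading contributions $\mathrm{Im}(i\rho\lambda\|v^\bullet\|^2)=\rho\lambda\|v^\bullet\|^2$ survive, while the diagonal dissipative integrals $\int_0^L D_i|P|^2\,dx$ and their analogues are real and drop out. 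Estimating the remaining (coupling) integrals with the dissipation bound ($\|P\|,\|Q\|,\|R\|=O((\|F\|\|U\|)^{1/2})$) and with the potential estimate (each such factor being $O(\lambda^{-1}(\dots))$) shows that every cross term is of order $(\|F\|\|U\|)^{1/2}\|U\|$, $\|F\|\|U\|$, or smaller; summing the three equations then yields $\lambda\big(\rho_1\|v^2\|^2+\rho_2\|v^4\|^2+\rho_1\|v^6\|^2\big)\leq C\big(\|F\|^{1/2}\|U\|^{3/2}+\|F\|\|U\|\big)$.

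Adding the kinetic and potential estimates gives $\|U\|^2\leq C\lambda^{-1}\big(\|F\|^{1/2}\|U\|^{3/2}+\|F\|\|U\|\big)+C\lambda^{-2}\big(\|F\|\|U\|+\|F\|^2\big)$. Dividing by $\|U\|$ and absorbing the term $C\lambda^{-1}\|F\|^{1/2}\|U\|^{1/2}$ into the left-hand side by Young's inequality produces $\|U\|\leq C\lambda^{-1}\|F\|$ for $|\lambda|$ large, exactly the required bound, whence part \textit{iii)} of Theorem \ref{type} delivers analyticity of $e^{t\mathcal{A}_j}$. The delicate bookkeeping I anticipate is verifying that each of the many coupling terms produced by the integration by parts in the kinetic step carries either a dissipation factor or a factor $\lambda^{-1}$ from the potential estimate, so that none of them reproduces $\|U\|^2$ with an $O(1)$ prefactor; this is precisely where having all three global dampings is indispensable.
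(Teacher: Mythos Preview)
Your proposal is correct. The paper does not actually prove this theorem: it only states that ``the proof relies on the characterization of the analytic stability stated in Theorem \ref{type} and on the same kind of proof used for the preceding results: we use a contradiction argument and much simpler estimation to obtain the result. This much simpler proof is left to the reader.'' So the paper points towards a contradiction argument patterned on Sections \ref{Polynomial1}--\ref{Polynomial2}, whereas you give a direct resolvent estimate $\|U\|_{\mathcal H_j}\le C|\lambda|^{-1}\|F\|_{\mathcal H_j}$. Both routes feed into part \textit{iii)} of Theorem \ref{type}, but yours is self-contained and avoids the sequence machinery.

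One remark that tightens your kinetic step: when you sum the three equations multiplied by $\overline{v^2},\overline{v^4},\overline{v^6}$ and integrate by parts, the Bresse coupling structure makes \emph{all} the cross terms recombine exactly into
\[
\int_0^L S_1\overline{P}\,dx+\int_0^L S_2\overline{Q}\,dx+\int_0^L S_3\overline{R}\,dx,
\qquad S_1=k_1(v^1_x+v^3+\ell v^5)+D_1P,\ \text{etc.},
\]
because $\overline{v^2_x}+\overline{v^4}+\ell\overline{v^6}=\overline{P}$, $\overline{v^4_x}=\overline{Q}$, $\overline{v^6_x}-\ell\overline{v^2}=\overline{R}$. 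Taking imaginary parts, the $D_i$ contributions are real and the remaining potential factors are $O(\lambda^{-1})$, so one gets the sharper bound $\lambda(\rho_1\|v^2\|^2+\rho_2\|v^4\|^2+\rho_1\|v^6\|^2)\le C\|F\|_{\mathcal H_j}\|U\|_{\mathcal H_j}$ directly, with no $\|F\|^{1/2}\|U\|^{3/2}$ term and hence no Young-inequality absorption needed. Your slightly weaker term-by-term estimate is of course still sufficient.
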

The proof relies on the characterization of  the analytic stability stated in theorem \ref{type} and on the same kind of proof  used for  the preceding results : 
we use a contradiction argument and much simpler estimation to obtain the result. This much simpler proof is left to the reader.
\paragraph{\textbf{Localized smooth damping : exponential stability}}
In \cite{LiuLiu-1998}, K. Liu and Z. Liu  considered a one-dimensional wave equation with Kelvin-Voigt damping distributed locally on any subinterval
of the region occupied by the beam. They proved that the semigroup associated with the equation for the transversal motion of the beam
is exponentially stable, although the semigroup associated with the equation for the longitudinal motion of the beam is not exponentially stable.

And in \cite{Liu-Liu-2002}, K. Liu and Z. Liu  reconsidered the one-dimensional linear wave equation with the Kelvin-Voigt damping presented on a subinterval but with
smooth transition at the end of the interval. They proved that the smoothness of the damping coefficient at the interface leads to an exponential stability. 
They were the first researchers  to suggest that discontinuity of material properties at the interface and the “type” of  the damping can affect the qualitative behavior of the energy decay.
The smoothness of the coefficient at the interface plays a crucial role in the stabilization of  the wave equation. In this part, we generalize these results on Bresse system.

So we consider the Bresse systems \eqref{eqq1.1'}-\eqref{DDDD} and \eqref{eqq1.1'}-\eqref{DNND} subject to three local viscoelastic Kelvin-Voigt dampings
with smooth coefficients at the interface. We establish uniform (exponential) stability of the $C_0$-semigroup $e^{t\AA_j}$, $j=1,2$.
For this purpose, let $\emptyset \not=\omega=(\alpha,\beta)\subset (0,L)$ be the biggest nonempty open subset of $(0,L)$ satisfiying:
\begin{equation} \label{3local}
	\exists\, d_0 >0 \,\,\, \mbox{such that} \,\, D_i \geq d_{0},\ \ {\rm{for \, almost \, \ every}} \ \ x \in \omega, \, \, i=1,2,3.
\end{equation}
\begin{thm}\label{Bresse Theorem 2.10}
	Assume that condition \eqref{3local} holds. Assume also that $D_1$, $D_2$,  $D_3\in W^{1,\infty}(0,L)$. Then, the $C_0$-semigroup $e^{t\mathcal{A}_j}$
is exponentially stable in $\HH_j$, $j=1,2$, {\it i.e.}, for all $U_0\in \HH_j$, there exist constants $M\geq1$ and $\delta >0$ independent of $U_0$ such that:
	\begin{equation*}
\left\|e^{t\mathcal{A}_j}U_0\right\|_{\mathcal{H}_j}\leq M e^{-\delta t}\left\|U_0\right\|_{\mathcal{H}_j}, \quad t\geq0,\,\, j=1,2.
	\end{equation*}
\end{thm}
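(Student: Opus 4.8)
The plan is to apply the frequency-domain characterization of exponential stability, namely part \textit{i)} of Theorem \ref{type}. Since $i\mathbb{R}\subseteq\rho(\mathcal{A}_j)$ has already been established in Lemmas \ref{Bresse Lemma 2.6} and \ref{Bresse Lemma 2.7}, it remains only to prove the uniform resolvent bound $\sup_{\lambda\in\mathbb{R}}\norm{(i\lambda I-\mathcal{A}_j)^{-1}}_{\mathcal{L}(\mathcal{H}_j)}<+\infty$. I would argue by contradiction in the now-familiar way: assume the existence of $\lambda_n\in\mathbb{R}$ with $|\lambda_n|\to+\infty$ and $U_n=(v^1,\dots,v^6)^{\mathsf{T}}\in D(\mathcal{A}_j)$ with $\norm{U_n}_{\mathcal{H}_j}=1$ such that $F_n:=(i\lambda_n I-\mathcal{A}_j)U_n\to0$ in $\mathcal{H}_j$. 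Writing out the six component equations exactly as in Section \ref{Polynomial1}, but now with right-hand sides $f^k_n$ tending to zero \emph{without} any negative power of $\lambda_n$, the goal is to derive $\norm{U_n}_{\mathcal{H}_j}=o(1)$, contradicting the normalization.

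First I would extract the dissipation: taking the inner product of $F_n$ with $U_n$ and invoking \eqref{3local}, one obtains on the damped set $\omega=(\alpha,\beta)$ the velocity estimates $\norm{v^2_x+v^4+\ell v^6}_{L^2(\omega)}=o(1)$, $\norm{v^4_x}_{L^2(\omega)}=o(1)$, $\norm{v^6_x-\ell v^2}_{L^2(\omega)}=o(1)$, and then, through the relations $v^2=i\lambda v^1-f^1$ and its analogues, the strain estimates $\norm{v^1_x+v^3+\ell v^5}_{L^2(\omega)}=o(1)/\lambda$, $\norm{v^3_x}_{L^2(\omega)}=o(1)/\lambda$, $\norm{v^5_x-\ell v^1}_{L^2(\omega)}=o(1)/\lambda$. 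The crucial difference from Sections \ref{Polynomial1}--\ref{Polynomial2} is that the hypothesis $D_i\in W^{1,\infty}(0,L)$ forces a Lipschitz, hence continuous, transition of each coefficient across $\alpha$ and $\beta$: when the momentum equations are multiplied by $\eta\overline{v^1}$, $\eta\overline{v^3}$, $\eta\overline{v^5}$ (with $\eta$ the cut-off already introduced, supported in $\omega$ and equal to $1$ on $[\alpha+\epsilon,\beta-\epsilon]$) and integrated by parts, the terms carrying $D_i'$ are now merely bounded multipliers and can be absorbed into the dissipation, so that no negative power of $\lambda$ is produced. This yields the full interior energy estimate $\int_{\alpha+\epsilon}^{\beta-\epsilon}(|\lambda v^1|^2+|\lambda v^3|^2+|\lambda v^5|^2)\,dx=o(1)$, that is, $\norm{U_n}_{\mathcal{H}_j}=o(1)$ on $(\alpha+\epsilon,\beta-\epsilon)$.

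The decisive and hardest step is to propagate this smallness from the subinterval $(\alpha+\epsilon,\beta-\epsilon)$ of the damped region to the entire interval $(0,L)$, \emph{with constants uniform in} $\lambda$. One cannot simply recast the interior equations as a first-order ODE $V_x=B(\lambda)V$ and apply Gronwall, as in the strong-stability Lemma \ref{Bresse Lemma 2.6}, because $\norm{B(\lambda)}$ grows with $\lambda$ and the resulting exponential factor would destroy uniformity. Instead I would employ a piecewise multiplier of the form $q(x)\,\overline{v^i_x}$ with a suitable weight $q$ (in the spirit of \cite{wehbey}), applied to the three momentum equations simultaneously and summed; the interior estimates above serve as the observation, and the boundary conditions \eqref{DDDD}/\eqref{DNND} kill the endpoint contributions at $0$ and $L$. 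The coupling terms (through $v^1_x+v^3+\ell v^5$ and the curvature factor $\ell$) produce cross terms that must be controlled by Young's inequality and reabsorbed; here again the smoothness of the $D_i$ is what prevents the appearance of singular interface contributions and keeps every constant independent of $n$. Carrying out this multiplier identity yields $\norm{U_n}_{\mathcal{H}_j}=o(1)$ on all of $(0,L)$, the desired contradiction.

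Finally, the two cases $j=1$ and $j=2$ are handled in parallel: the only change is that for $j=2$ the components $v^3,v^5$ live in $H^1_*(0,L)$ with the Neumann endpoint conditions $v^3_x|_{0,L}=v^5_x|_{0,L}=0$, which is harmless in every integration by parts above. I expect the main obstacle to be precisely the uniform propagation step, where the challenge is to design the multiplier and weight so that both the coupling between the three waves and the transition zone of the damping are controlled without any loss in $\lambda$; the assumption $D_i\in W^{1,\infty}(0,L)$ is exactly the ingredient that makes a $\lambda$-uniform (rather than $\lambda$-degenerate) estimate possible, in agreement with the Liu--Liu phenomenon recalled in the introduction.
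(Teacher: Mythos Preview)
Your proposal is correct and matches the paper's own treatment, which is itself only a one-sentence sketch: the paper states that the proof ``relies on the characterization of the exponential stability stated in theorem \ref{type} and on the same kind of arguments used for the proof of the preceding results: we use a contradiction argument and simpler estimation to obtain the result. This proof is left to the reader.'' Your outline---frequency-domain contradiction, dissipation estimates on $\omega$, local energy smallness via cut-off multipliers, then propagation to $(0,L)$---is exactly that template, and in fact more detailed than what the paper supplies.

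One point worth flagging: for the propagation step you opt for the classical weighted multiplier $q(x)\overline{v^i_x}$ in the spirit of \cite{wehbey}, whereas the ``preceding results'' the paper alludes to (Section \ref{Polynomial1}) carry out propagation via the auxiliary frictionally-damped Bresse system of Lemma 4.5 and Lemma \ref{lemma6.6}. Either route works here; the auxiliary-system route has the advantage that the three coupled equations are handled simultaneously by a single resolvent bound, while your direct multiplier route is more elementary but requires tracking the cross terms by hand. Also, your explanation of where $W^{1,\infty}$ enters (``terms carrying $D_i'$ are bounded and can be absorbed into the dissipation'') is slightly misplaced: in the local cut-off estimates with $\eta\overline{v^i}$ the regularity of $D_i$ plays no role (cf.\ Lemma 4.4, which needs only $L^\infty$). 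The $W^{1,\infty}$ hypothesis is genuinely needed precisely in your propagation step, when the multiplier $q\overline{v^i_x}$ meets the stress term $[k_i(\cdot)+D_i(\cdot)]_x$ and one must integrate by parts without producing a factor of $\lambda$; there the boundedness of $D_i'$ is what keeps the estimate uniform.
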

Again, the proof  relies on the characterization of  the exponential stability stated in theorem \ref{type} and on the same kind of  arguments used for
the proof of the preceding results : we use a contradiction argument and simpler estimation to obtain the result. This proof is left to the reader.

\medskip

The following table summarizes the results of this study:

\begin{center}
	\begin{tabular}{|c|c|c|c|c|}
\hline
&&&&\\
Regularity of $D_1$ & Regularity of $D_2$ & Regularity of $D_3$ &  Localization  
&Energy decay rate \\
&&&&\\
\hline
&&&&\\
$ L^\infty{(0,L)}$&  $ L^\infty{(0,L)}$&  $ L^\infty{(0,L)}$& $\begin{array}{c}D_i\geq d_0>0 \ \ \mathrm{in} \ (0,L)\\ i=1,2,3 \end{array} $ & Analytic stability\\
&&&&\\
\hline
&&&&\\
$ W^{1,\infty}(0,L)$&$ W^{1,\infty}(0,L)$&$ W^{1,\infty}(0,L)$&  $\begin{array}{c}D_i\geq d_0>0 \ \ \mathrm{in} \ \omega\\ i=1,2,3 \end{array} $ & Exponential stability\\
&&&&\\
\hline 
$ L^\infty{(0,L)}$&  $ L^\infty{(0,L)}$&  $ L^\infty{(0,L)}$& $\displaystyle{\bigcap_{i=1}^{3}}$ \, supp$D_i=\overline{\omega}$ & Polynomial of type $\dfrac{1}{t}$\\
\hline
0&  $ L^\infty{(0,L)}$&  0&$ \ D_2\geq d_0>0 \ \ \mathrm{in} \ \omega$ & Polynomial of type $\dfrac{1}{\sqrt{t}}$\\
	 \hline
	\end{tabular}
\end{center}
\section*{Acknowledgments}
The authors thanks professor Kais Ammari for his valuable discussions and comments.\\
Chiraz Kassem would like to thank the AUF agency for its support in the framework of the PCSI project untitled {\it Theoretical and Numerical Study of Some Mathematical Problems and Applications}.\\
Ali Wehbe would like to thank the CNRS and the LAMA laboratory of Mathematics of the Universit\'e Savoie Mont Blanc for their supports. 

The authors thank also the referees for very useful comments.  
\bibliographystyle{siam}

\end{document}